\newcommand{\IS}{\overset{\mathbb{I}}{=}}
\newcommand{\TS}{\overset{\mathbb{F}}{=}}
\newcommand{\examend}{\hfill \mbox{$\diamondsuit$}}
\newcommand{\NS}{N_0}
\newcommand{\CC}[1]{M_\bullet(#1)}
\newcommand{\Ref}[1]{(\ref{#1})}
\newcommand{\real}{\mathbb{R}}
\newcommand{\XX}{\mathbf{X}}
\newcommand{\YY}{\mathbf{Y}}
\newcommand{\xx}{{\mathbf{x}}}
\newcommand{\yy}{{\mathbf{y}}}
\newcommand{\ww}{{\mathbf{w}}}
\newcommand{\arr}{\rightarrow}
\newcommand{\dd}{\,|\,}
\newcommand{\ECS}{{\boldsymbol{\emptyset}}}
\newtheorem{lemma}{Lemma}[section]
\newtheorem{prop}[lemma]{Proposition}
\newtheorem{thm}[lemma]{Theorem}
\newtheorem{cor}[lemma]{Corollary}
\theoremstyle{definition}
\newtheorem{Def}[lemma]{Definition}
\newtheorem{exam}[lemma]{Example}
\newtheorem{exams}[lemma]{Examples}
\theoremstyle{remark}
\newtheorem{rem}[lemma]{Remark}
\newtheorem{rems}[lemma]{Remarks}
\newcommand{\conc}{\diamond}
\newcommand{\ovee}{{\overline{\vee}}}
\newcommand{\sm}{\smallsetminus}
\newcommand{\mc}[1]{\mathcal{#1}}
\newcommand{\A}{\mc{A}}
\newcommand{\Ee}{\mc{E}}
\newcommand{\Mb}{\mathbf{M}}
\newcommand{\is}[1]{\mathbf{#1}}
\newcommand{\Nn}{\mathcal{N}}
\newcommand{\NN}{\mathbf{N}}
\newcommand{\rset}[2]{\left\lbrace #1 \dd #2\right\rbrace}
\newcommand{\set}[2]{\rset{#1}{#2}}
\newcommand{\tset}[2]{\big\lbrace #1\,\big|\;#2\big\rbrace}
\newcommand{\sset}[1]{\left\lbrace #1\right\rbrace}
\newcommand{\tsset}[1]{\big\lbrace #1\big\rbrace}
\newcommand{\R}{\mathbb{R}}
\newcommand{\Net}{\mathfrak{N}}
\newcommand{\F}{\mathcal{F}}
\newcommand{\f}{\mathbf{f}}
\newcommand{\Sw}{\widehat{\is{S}}}
\newcommand{\sw}{\widehat{S}}
\newcommand{\In}{\mathbb{I}}
\newcommand{\InH}{\widehat{\In}}
\newcommand{\Tr}{\mathbb{F}}
\newcommand{\TrH}{\widehat{\Tr}}
\renewcommand{\aa}{\alpha}
\newcommand{\FAN}{\mathbf{N}}
\newcommand{\FANw}{\widehat{\FAN}}
\newcommand{\DDN}{\mathcal{D}(\Net,\In,\Tr)}
\newcommand{\DD}{\mathcal{D}(\FAN)}
\newcommand{\HDD}{\widehat{\mathcal{D}}(\FAN)}
\newcommand{\EvOp}{\widehat{\Phi}}
\newcommand{\EvOpc}{\widehat{\Phi}^c}
\newcommand{\TT}{\mathbf{T}}
\newcommand{\Ss}{\mathbf{S}}
\newcommand{\s}{\mathrm{c}}
\title[Asynchronous Networks]{Asynchronous Networks: Modularization of Dynamics Theorem} 
\author{Christian Bick}
\thanks{Research supported in part by NSF Grant DMS-1265253 \& Marie Curie IEF Fellowship (Project 626111)}
\address{Department of Mathematics, Rice University MS-136, 6100 Main St., Houston, 
TX~77005, USA}
\curraddr{Department of Mathematics, University of Exeter, Exeter EX4 4QF, UK}
\email{c.bick@exeter.ac.uk}
\author{Michael Field}
\thanks{Research supported in part by NSF Grant DMS-1265253 \& Marie Curie IIF Fellowship (Project 627590)}
\address{Department of Mathematics, Rice University MS-136, 6100 Main St., Houston, 
TX~77005, USA}
\curraddr{Department of Mathematics, Imperial College, SW7 2AZ, UK}
\email{mikefield@gmail.com, Michael.Field@imperial.ac.uk}
\date{\today}
\begin{document}

\begin{abstract}
Building on the first part of this paper, we develop the theory of
functional asynchronous networks. We show that a large class of 
functional asynchronous networks can be (uniquely) represented as
feedforward networks connecting events or dynamical modules. For these
networks we can give a complete description of the network function in terms of the
function of the events comprising the network: the Modularization of Dynamics Theorem.
We give examples to illustrate the main results. 
\end{abstract}


\newcommand{\imagescaling}{0.9}

\maketitle

\let\oldtocsection=\tocsection
\let\oldtocsubsection=\tocsubsection
\let\oldtocsubsubsection=\tocsubsubsection

\renewcommand{\tocsection}[2]{\hspace{-\normalparindent}\oldtocsection{#1}{#2}}
\renewcommand{\tocsubsection}[2]{\hspace{0em}\oldtocsubsection{#1}{#2}}
\renewcommand{\tocsubsubsection}[2]{\hspace{1em}\oldtocsubsubsection{#1}{#2}}

\tableofcontents


\section{Introduction}

In this work we the develop of the theory of \emph{functional asynchronous networks}. Previously, in~\cite{BF0}, we
gave the general definition and formalism for an asynchronous network, together with some examples and results about products. 
We assume some familiarity
with~\cite{BF0} in what follows (most specifically, sections 2, 4 and 6 of~\cite{BF0}). 

The term `functional network' has been used previously. For example, classes of functional networks,
which have relations with control theory, have previously been
considered in a neuroscientific context~\cite{Bick2009a, RabVar2011, RabReview, Bick2014a}, and 
in homological studies of brain function~\cite{PETCH,SHP}.

In our context, a functional asynchronous network will be a network with
a prescribed set $\In$ of initializations and terminations $\Tr$. If the network phase space is $\Mb$, then $\In, \Tr$ will be closed disjoint subsets of
$\Mb$.  Roughly speaking, the \emph{function} of the network will be to get from any point $\XX \in \In$ to a point in $\Tr$ in finite time. 

Our main result will be to show that the function of a large class of functional asynchronous networks can be understood in terms
of the functions of the events that comprise the network. 

\begin{figure}[h]
\centering
\includegraphics[width=\textwidth]{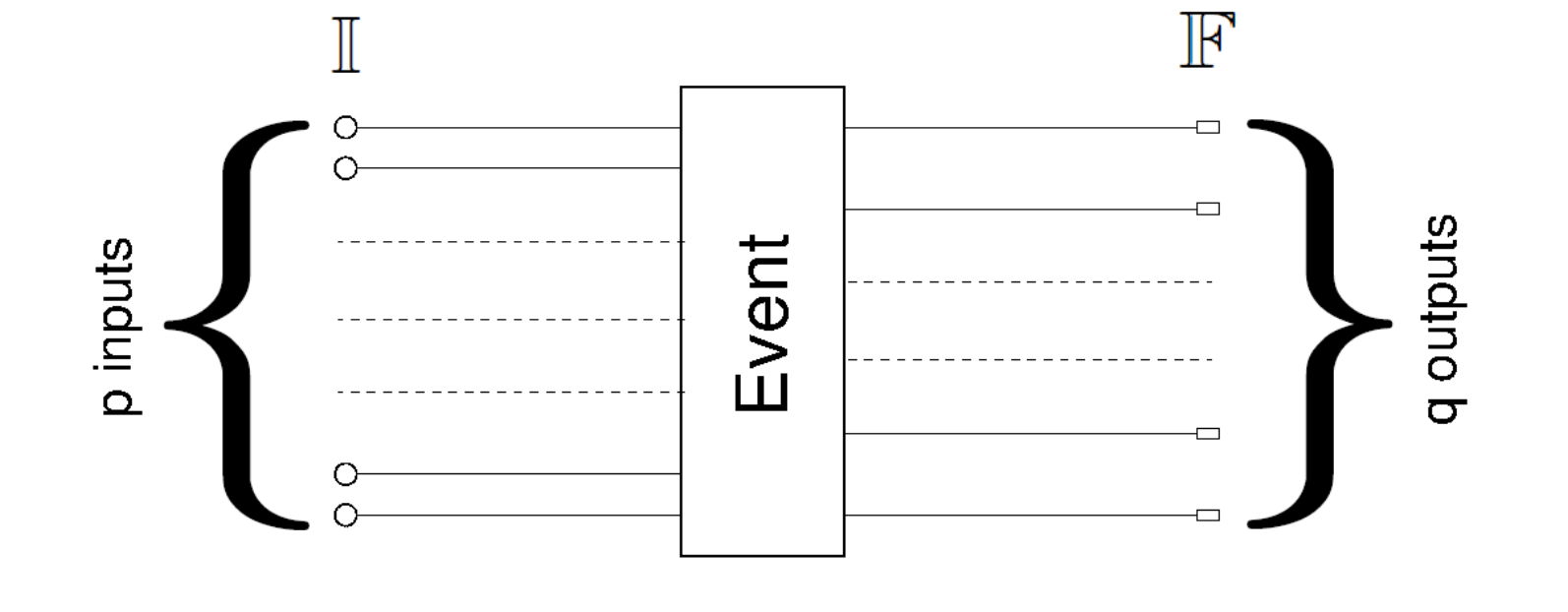}
\caption{An Event or Dynamical Module}
\label{prim}
\end{figure}

Referring to figure~\ref{prim}, we regard an event as a `dynamical module' that accepts a number of inputs 
and has a number of outputs (where each input and output corresponds to the state of a node).  In the figure 
we allow different numbers of inputs and outputs (see section \cite[\S 5]{BF0}) but in the present work we make the simplifying assumption 
that the event has the same number of inputs and outputs (this is not required for our main result).  Now imagine that a functional asynchronous network is built by
coupling together a finite set of dynamical modules -- see figure~\ref{prim1} for a nine node network built using eight events or dynamical modules.
\begin{figure}[h]
\centering
\includegraphics[width=\textwidth]{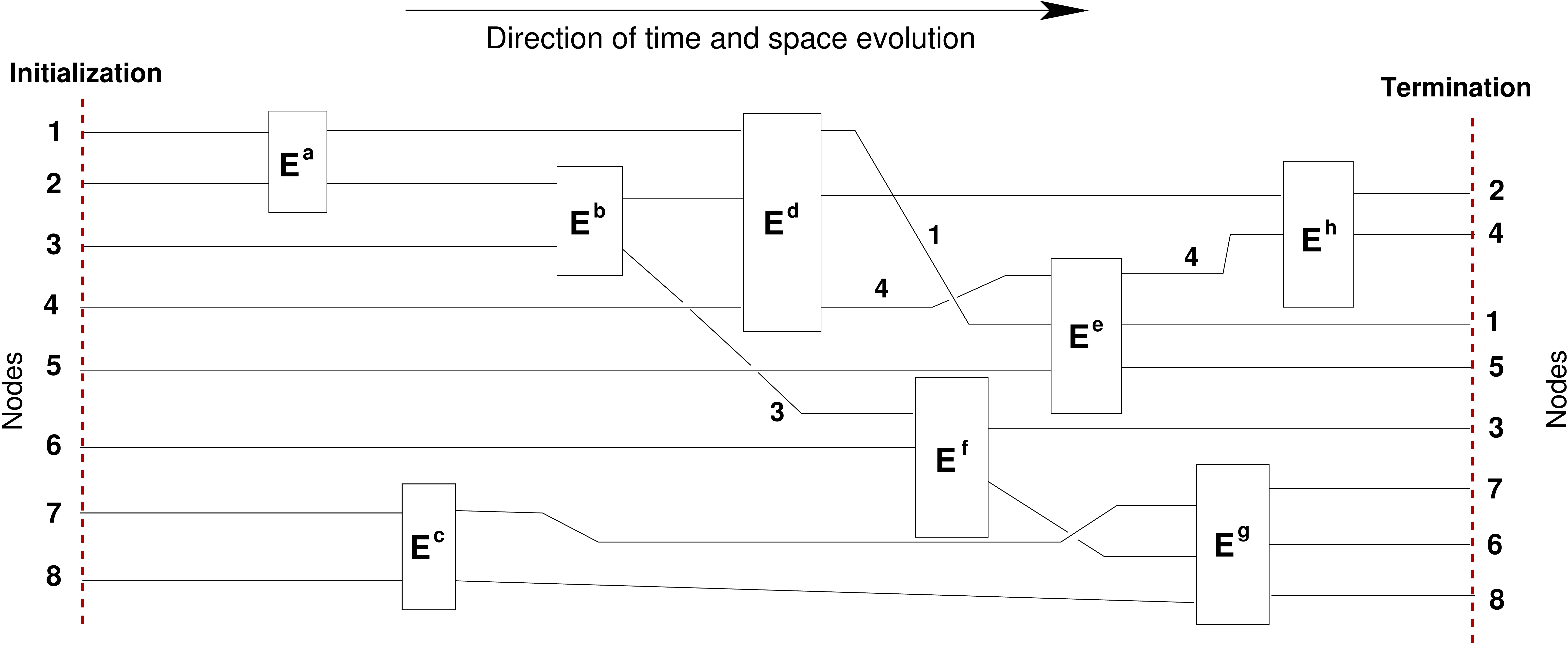}
\caption{A feedforward network built from events}
\label{prim1}
\end{figure}

Our first main result identifies a large class of functional asynchronous networks which have a unique representation as feedforward networks built from events of the type described above. 
Our second result shows that for these networks, the
function of the original network can be completely described in terms of the functions of the events comprising the feedforward network.
We refer to the two results as the \emph{Modularization of Dynamics Theorem}.

We conclude by describing the contents of the paper in more detail.  In section~\ref{secfun} we give the formal definitions of a functional asynchronous network,
initialization and termination sets, and network function. We also discuss the phenomenon of \emph{dynamical deadlocks}. In section~\ref{RWRF}, we give the key definitions of
geometric, weakly regular and regular asynchronous networks and construct the evolution operator that allows for generalized initialization in space and time.  We define functional asynchronous networks of simple type and show how every weakly regular asynchronous network has an associated weakly regular network of simple type with the same network function. 
We conclude with some comments about hidden deadlocks. In section~\ref{join:sec}, we define the operations of
amalgamation and concatenation for families of functional asynchronous networks of simple type that share the same node set.
In section~\ref{factorsec} we state and prove our main results and give some illustrative examples. We conclude in section~\ref{conclu} with
comments and a brief discussion of some outstanding problems.

\section{Functional asynchronous networks}
\label{secfun}
In section~5.2~\cite{BF0},  we introduced the idea of a functional asynchronous network
in the setting of a transport network.  Functional asynchronous networks will be central 
to the formulation and proof of the modularization of dynamics theorem. In this section we give basic definitions
and properties as well as examples that illustrate the phenomenom of a dynamical deadlock.

We continue with the notational conventions of~\cite{BF0}. In particular, $\Net=(\mathcal{N},\A,\mathcal{F},\Ee)$ 
will always denote a proper asynchronous network with node set $\Nn = \{\NS,N_1,\dotsc,N_k\}$ and associated semiflow 
\[
\Phi=(\Phi_1,\dotsc,\Phi_k):\prod_{i\in\is{k}}M_i \times \real_+=\Mb\times\real_+ \arr \Mb.
\]

\subsection{Initialization, termination and network function}

\begin{Def}\label{init-term}
Closed subsets $\In,\Tr$ of $\Mb$ are \emph{initialization and termination sets} for $\Net$
if 
\begin{enumerate}
\item[(P1)] there are closed disjoint subsets $\In_i,\Tr_i$ of $M_i$, $i \in \is{k}$,  
such that
\[ 
\In = \prod_{i=1}^k\In_i, \qquad
\Tr = \prod_{i=1}^k\Tr_i 
\]
\item[(P2)] If $\XX\in \In$, then  for each $i \in \is{k}$ there exists $t_i(\XX) \ge 0$ such that $\Phi_i(\XX,t) \in \In_i$
if and only if $t \in [0,t_i(\XX)]$.
\end{enumerate}
\end{Def}
\begin{lemma}
\label{spropinit}
Let  $\In,\Tr$ be initialization and termination sets for $\Net$. Then
\begin{enumerate}
\item[{\rm (P3)}] $\In$ contains no compact $\Phi$-invariant sets.
\item[{\rm (P4)}] If $\XX \in \In$ and there exists $t \ge 0$ such that $\Phi_i(\XX,t) \in \Tr_i$, then $t > t_i(\XX)$. 
\end{enumerate}
If {\rm (P4)} applies and we let 
$S_i = S_i(\XX) =\inf_{t \ge 0}\sset{t \dd \Phi_i(\XX,t) \in \Tr_i}$ denote the \emph{transit time} from~$\XX$ to~$\Tr_i$,
then $\Phi_i(\XX,S_i) \in \Tr_i$.
\end{lemma}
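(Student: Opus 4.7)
The plan is to dispatch (P3), (P4), and the attainment of the infimum in turn; each should follow almost directly from the separating structure that conditions (P1) and (P2) of Definition~\ref{init-term} impose on $\In_i$ and $\Tr_i$.

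For (P3), I would argue by contradiction. Suppose $K \subseteq \In$ is compact and $\Phi$-invariant, and fix any $\XX \in K$. Since $\Phi(\XX,t) \in K \subseteq \In = \prod_{i \in \is{k}} \In_i$ for every $t \ge 0$, the projection $\Phi_i(\XX,t)$ lies in $\In_i$ for \emph{all} $t \ge 0$. But (P2) asserts the existence of a real number $t_i(\XX) \ge 0$ such that $\{t \ge 0 \dd \Phi_i(\XX,t) \in \In_i\} = [0,t_i(\XX)]$, a bounded interval. This is a contradiction; in particular, no compact $\Phi$-invariant subset of $\In$ can exist.

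For (P4), fix $\XX \in \In$ and suppose $\Phi_i(\XX,t) \in \Tr_i$ for some $t \ge 0$. By (P1) the sets $\In_i$ and $\Tr_i$ are disjoint, hence $\Phi_i(\XX,t) \notin \In_i$. Condition (P2) then forces $t \notin [0,t_i(\XX)]$, i.e., $t > t_i(\XX)$.

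For the attainment of the infimum, I would use the continuity of the semiflow together with the closedness of $\Tr_i$. Writing $S_i = \inf\{t \ge 0 \dd \Phi_i(\XX,t) \in \Tr_i\}$, choose a sequence $(t_n)$ with $\Phi_i(\XX,t_n) \in \Tr_i$ and $t_n \to S_i$; continuity of $\Phi_i(\XX,\cdot)$ gives $\Phi_i(\XX,t_n) \to \Phi_i(\XX,S_i)$, and closedness of $\Tr_i$ yields $\Phi_i(\XX,S_i) \in \Tr_i$ as required.

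There is no substantive obstacle in any of these steps; the only point that warrants a sentence of care is the reading of (P2), namely that ``there exists $t_i(\XX) \ge 0$'' means a \emph{finite} non-negative real, because it is precisely the boundedness of $[0,t_i(\XX)]$ that rules out compact $\Phi$-invariant subsets of $\In$ in (P3) and that drives the strict inequality in (P4).
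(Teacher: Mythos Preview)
Your proof is correct and follows the same approach as the paper, which dispatches the lemma in a single line: ``Obviously (P2) $\Longrightarrow$ (P3); (P4) follows since $\In_i,\Tr_i$ are closed disjoint sets.'' You have simply unpacked these implications --- disjointness of $\In_i,\Tr_i$ for the strict inequality in (P4), closedness of $\Tr_i$ plus continuity of the semiflow for attainment of the infimum --- exactly as intended; note in passing that your argument for (P3) does not actually use compactness, so (P2) in fact rules out \emph{any} nonempty $\Phi$-invariant subset of $\In$.
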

\proof Obviously (P2) $\Longrightarrow (P3)$; (P4) follows since $\In_i,\Tr_i$ are closed disjoint sets. \qed

\begin{rems}
\label{P:rems}
(1) We do not require that for \emph{every} $\XX \in \In$, $i \in \is{k}$, 
there is a transit time $S_i$ for which $\Phi_i(\XX,S_i) \in \Tr_i$. Moreover, if
$\Phi_i(\XX,S_i) \in \Tr_i$, it may or may not be the case that $\Phi_i(\XX,t) \in \Tr_i$, for $t > S_i$. 
The transit time $S_i$ is the time that the state of node $N_i$ first  enters $\Tr_i$. 
For some examples, it is natural to have $\Phi_i(\XX,t) = \Phi_i(\XX,S_i)$ 
for all $t \ge S_i$ (so $\Phi_i(\XX,S_i)$ really is a terminal 
state for the \emph{semiflow} $\Phi$, not just for the function of
transitioning from points in $\In$ to $\Tr_i$, $i \in \is{k}$). 
In other situations, states may continue to evolve under~$\Phi$. \\
(2) Later it will sometimes be useful to allow $\In_i = \Tr_i$ in definition~\ref{init-term}. In this case, 
we take the transit time $S_i$ to be zero. \\
(3) It may be the case that a phase space $M_i$ has 
boundary~$\partial M_i$ and 
$\partial M_i \supset \In_i \cup \Tr_i$ (for example, 
the passing loop example of \cite[\S 5.1]{BF0}). If so, it is
natural to assume that $\Phi_i(\XX,t) = \Phi_i(\XX,S_i)$, $t \ge S_i$.\\
(4) Condition (P2) implies trajectories do not re-enter $\In$: if $t > t_i(X)$, then
$\Phi_i(\XX,t) \notin \In_i$ and so if $t > \max_i t_i(X)$, 
$\Phi_i(\XX,t) \notin \In_i$, all $i \in\is{k}$.
\end{rems}

\begin{Def}
\label{def:connect}
Let $\In,\Tr$ be initialization and termination sets for~$\Net$. A 
point $\XX \in \In$ is \emph{$\Phi$-connected to~$\Tr$} if there 
exists $\YY = (\yy_1,\dotsc,\yy_k) \in \Tr$ and transit times
$\is{S} = \is{S}(\XX) = (S_1,\dotsc,S_k) \in \real_+^k$ such that
\[
\yy_i = \Phi_i(\XX,S_i), \; i \in \is{k}.
\]
\end{Def}

\begin{rems}
\label{rem:Term}
(1) With the notation of definition~\ref{def:connect}, we say 
\emph{$\XX$ is $\Phi$-connected to $\YY$}. If~$\XX$ is $\Phi$-connected 
to~$\YY$, then $S_i=S_i(\XX)$ is always the minimal transit time 
from~$\XX$ to $\Tr_i$. Setting $\is{S}(\XX) = (S_1,\dotsc,S_k)$ and 
abusing notation, we often write 
$\YY=(\yy_1,\dotsc,\yy_k)= \Phi(\XX,\is{S}(\XX))$. We refer to~$\yy_i$ 
as the \emph{terminal state} of~$N_i$, $i \in \is{k}$.\\
(2) If $\XX \in \In$ is $\Phi$-connected to~$\Tr$, this does not imply 
that the $\Phi$-trajectory through~$\XX$ meets~$\Tr$. Even 
if there exists $s > 0$ such that $\Phi(\XX, s)\in\Tr$, then 
$s$ and $\Phi(\XX, s)$ may not give any of the transit times and 
terminal states. That is, we may have 
$S_i<s$ and
$\Phi_i(\XX, s)\neq \yy_i$  for all $i \in \is{k}$.
However, if 
$\Phi_i(\XX,t) = \Phi_i(\XX,S_i)$ for all $t\geq S_i$ 
then $\XX\in\In$ is $\Phi$-connected 
to~$\Tr$ if and only if the $\Phi$-trajectory through~$\XX$ 
meets~$\Tr$. 
\end{rems}

Let $\In,\Tr$ be initialization and termination sets for~$\Net$ and 
set
\[
\DDN = \set{\XX \in \In}{\XX\;\text{is $\Phi$-connected to } \Tr}.
\]
\begin{Def}
(Notation and assumptions as above.)
The \emph{transition function} $G_0: \DDN \subset \In \arr \Tr$ is defined 
by
\[
G_0(\XX) = \Phi(\XX,\is{S}(\XX)),\;\XX \in \DDN.
\]
\end{Def}

\begin{Def}
\label{def:fan}
Let $\In,\Tr$ be initialization and termination sets for $\Net$.
The triple $\FAN = (\Net,\In,\Tr)$ is a \emph{functional asynchronous 
network (FAN)}.  The  \emph{network function} is transition from points in $\In$ to $\Tr$ 
and is represented by the transition function $G_0:  \DD \subset \In \arr \Tr$. The
network function is \emph{achieved} if $ \DD = \In$ -- that is, if every point 
in~$\In$ is $\Phi$-connected to~$\Tr$ and $G_0: \In\arr\Tr$.
\end{Def}
\begin{rems}
(1) In general, $\DD$  may be a proper subset of
$\In$. Achieving the network function may be part of the process of network design 
and involve a mix of dynamics and logical conditions (see below).\\
(2) In the sequel we regard the achievement of network function as synonymous with the transition
function $G_0$ having domain $\In$. 
\end{rems}

\subsection{Deadlocks}
In this section we address one of the reasons for the failure of a FAN to achieve its function: the presence of deadlocks. 

\begin{Def}
A FAN $\FAN=(\Net,\In,\Tr)$ 
has a \emph{dynamical deadlock} if there is
a nonempty subset $A \subset \Mb$ such that
\begin{enumerate}
\item $A$ is compact and semiflow invariant: $\Phi^t(A) = A$, $t \ge 0$.
\item $A \cap (\In \cup \Tr) = \emptyset$.
\item There is a nonempty subset $K$ of $\In\sm\DD$ such that every 
trajectory through a point of $K$ enters $A$ within finite time.
\end{enumerate}
We refer to~$A$ as \emph{deadlock sink}. If~$K$ contains an open 
set, $A$ is a \emph{topological deadlock sink}, and if~$K$ has 
nonzero Lebesgue measure, $A$ is an \emph{observable deadlock sink}.
\end{Def}

\begin{exam}
\label{deadex}
In the passing loop example of \cite[\S 5.1]{BF0}, a required  condition for exiting the passing loop was that
two coupled phase oscillators were phase synchronized to within $\varepsilon\in (0,0.5)$. Assuming identical frequencies, 
phase oscillator dynamics is given by 
\[
\theta_1' = \omega + k \sin 2\pi (\theta_2 - \theta_1),\quad \theta_2' = \omega + k \sin 2\pi (\theta_1 - \theta_2),
\]
where $k> 0$. If $|\theta_2(0) - \theta_1(0)| = 0.5$, 
then $|\theta_2(t) - \theta_1(t)| = 0.5$ for all $t \ge 0$ and so there is a
deadlock with deadlock sink 
\[
A = \{((0,0),(\theta,\theta+0.5)) \dd \theta \in \mathbb{T}\}\subset [-a,b]^2\times \mathbb{T}^2. 
\]
(For this example, 
$\In_1 = \Tr_2 = \sset{-a} \times \mathbb{T}$, $\Tr_1 = \In_2 = \sset{b} \times \mathbb{T}$ and the deadlock will not be 
observable.) 
\examend
\end{exam}

The next lemma shows that dynamical deadlocks cannot occur in networks 
governed by a single set of differential equations.

\begin{lemma}
The  FAN $\FAN=(\Net,\In,\Tr)$ has no dynamical deadlocks if
$\mathcal{E}|\Mb \smallsetminus (\In\cup \Tr)$ is 
constant.
\end{lemma}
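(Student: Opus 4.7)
The plan is to argue by contradiction, exploiting the fact that constant $\mathcal{E}$ on $\Mb\setminus(\In\cup\Tr)$ means the semiflow $\Phi$ restricted to that open region coincides with the flow of a single vector field $F_\alpha$ (where $\alpha$ is the constant value of $\mathcal{E}$). The central observation is that a deadlock sink $A$ is disjoint from $\In\cup\Tr$ and so lies entirely in the region of single-vector-field dynamics, where backward uniqueness of ODE solutions is available.

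Assume, for contradiction, that there is a dynamical deadlock with sink $A$ and initial subset $K\subset\In\setminus\DD$, and fix $\XX\in K$. I would set
\[
t_0 \defoo \inf\set{t\ge 0}{\Phi(\XX,t)\in A}.
\]
Since $A$ is closed and $t\mapsto \Phi(\XX,t)$ is continuous, the infimum is attained, so $\Phi(\XX,t_0)\in A$. Because $\XX\in\In$ and $A\cap\In=\emptyset$, we have $\XX\notin A$, so $t_0>0$. As $\In\cup\Tr$ is closed and $\Phi(\XX,t_0)\in A\subset\Mb\setminus(\In\cup\Tr)$, continuity provides a $\delta\in(0,t_0]$ with $\Phi(\XX,t)\in\Mb\setminus(\In\cup\Tr)$ for every $t\in(t_0-\delta,t_0+\delta)$.

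On this interval the semiflow is generated by the single (locally Lipschitz) vector field $F_\alpha$, so it is uniquely solvable forward and backward in time in a tube around the trajectory. Using the stated invariance $\Phi^s(A)=A$ for $s\ge 0$, there is a point $q\in A$ with $\Phi(q,\delta/2)=\Phi(\XX,t_0)$. Both $q$ and $\Phi(\XX,t_0-\delta/2)$ are preimages of $\Phi(\XX,t_0)$ under the time-$\delta/2$ map of $F_\alpha$ computed along a trajectory that remains in $\Mb\setminus(\In\cup\Tr)$, so uniqueness of ODE solutions forces $\Phi(\XX,t_0-\delta/2)=q\in A$. This contradicts the minimality of $t_0$, proving the lemma.

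The only non-routine step is justifying the backward tracing, which reduces to the standard fact that a Lipschitz vector field generates a locally reversible flow on any open set where the trajectory does not exit; this is where the hypothesis that $\mathcal{E}$ is constant on $\Mb\setminus(\In\cup\Tr)$ (rather than merely piecewise constant) is used in full strength. If one wanted to weaken the hypothesis, the main obstacle would be handling crossings back and forth between $\In\cup\Tr$ and its complement, which can destroy the clean backward-uniqueness argument above.
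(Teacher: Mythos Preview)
Your proof is correct and follows essentially the same idea as the paper's proof: both exploit that on the open set $\Mb\setminus(\In\cup\Tr)$ the semiflow coincides with the flow of a single vector field, so the compact invariant set $A$ cannot be reached in finite time from outside. The paper's argument is more terse---it simply observes that $A$ is invariant for the flow of the single vector field $\mathbf{F}$ determined by $\mathcal{E}|\Mb\setminus(\In\cup\Tr)$ and that $A$ sits inside an open region governed by $\mathbf{F}$, then invokes the standard fact that a flow-invariant set for a smooth vector field cannot be entered from its complement in finite time---whereas you unpack this fact explicitly via the first-hitting time $t_0$ and a backward-uniqueness argument. Your version has the minor advantage of making transparent exactly where the constancy of $\mathcal{E}$ (and hence reversibility of the local flow) is used.
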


\begin{proof} Suppose that $A$ is a deadlock sink for $\FAN$. 
Let~$\mathbf{F}$ be the the vector field on~$\Mb$ determined by
$\mathcal{E}|\Mb \smallsetminus (\In\cup\Tr)$. 
Clearly, $A$ is invariant by the flow of~$\mathbf{F}$.  Since~$A$ 
has an open neighbourhood in 
$\Mb \smallsetminus (\In\cup \Tr)$, 
no trajectory starting in $\In$ (or $\Mb\smallsetminus A$) 
can enter $A$ in finite time, contradicting our assumption 
that~$A$ is a deadlock sink for $\FAN$.
\end{proof}

\begin{Def}
Let  $A$ be a deadlock sink for the FAN $(\Net,\In,\Tr)$.
\begin{enumerate}
\item $A$ is a \emph{deadlock} if $A$ consists of a single point $(a_1,\dotsc,a_k)$. The deadlock is total if $a_i \notin \Tr_i$ for all
$i \in \is{k}$, and \emph{partial} if there exists $i \in \is{k}$ such that $a_i \in \Tr_i$.
\item $A$ is a \emph{livelock} if $A$ is a periodic orbit.
\end{enumerate}
\end{Def}

\begin{exams}\label{ex:FourWayStop}
(1) The deadlock given in example~\ref{deadex} is a livelock  -- an antiphase (periodic) solution of
the phase oscillator pair.\\
(2) Deadlocks can occur because of faulty logic. 
For example, if we have a 4-way stop sign at a cross roads with the following traffic rules.
\begin{itemize}
\item[(a)] All traffic arriving at the stop sign stops.
\item[(b)] No left or right turns allowed -- when a vehicle restarts it proceeds in same direction whence it came.
\item[(c)] Priority is determined by order of arrival -- first to arrive (stop), first to leave (restart) -- unless more than one vehicle
arrives at the stop sign at the same time, in which case priority is given to the car on the right (there is no issue if two
vehicles arrive at the same time from opposite directions).
\end{itemize}
A total deadlock occurs if four cars arrive at the stop sign at the same time. There is no easy way to vary the logic to resolve the deadlock.
Provided the traffic is light, the deadlock may be regarded as
acceptable\footnote{An example of the \emph{ostrich algorithm} from computer science.}, otherwise it may be preferable
(a) to have a two way stop sign, or  (b) use traffic lights. Although the deadlock is, in principle, not observable, in practice
it is: drivers are only able to approximately judge the time of arrival of vehicles on their right. 
\examend
\end{exams}

The presence of deadlocks can prevent a functional network from completing
its function. We briefly discuss some examples of deadlocks in real-world networks 
and how they can be resolved.

\subsubsection{Resource allocation}
In computer science and distributed systems, deadlocks are typically found in
problems with resource allocation. In network terminology, two (or more)
nodes trying to connect to a third node $N_T$ which only allows one connection.  
For example, in threaded computation, data corruption can occur if
two threads attempt simultaneous writes to the same memory.  The resolution involves
(a) the connected node $N_S$ having a \emph{lock} on the target node $N_T$ until the process requiring the connection is finished;
(b) a protocol for how to handle the situation when two nodes simultaneously request connection to
$N_T$, (c) prioritisation of connection requests. If the node $N_S$ does not ever get disconnected from
$N_T$, then there will be at least a partial deadlock and if low priority nodes are connected then
the system may run slowly. If the time of connection is small then 
attempts at simultaneous connection may be rare: deadlocks are often very hard to find in complex networks.
In terms of event driven dynamics and asynchronous networks,
(a,b) are handled by a correctly written event structure which may use local time. 
After the connection between  $N_S$ and $N_T$ is released, a new connection will be made either
randomly or consistent with a prioritisation list (or both!).   In large complex networks,
it may be extremely hard to organise the structure so that there are no deadlocks.

\section{Regularity conditions on a FAN}
\label{RWRF}
Our aim in the next two sections is show how we can express the dynamics of a FAN
in terms of the dynamics of subnetworks which are also FANs. 
In the present section, we shift our focus from the absolute 
definition of a FAN, as given in definition~\ref{def:fan},
to a more relative definition where we impose geometric and structural conditions on a (sub)network that give dynamics
that is closely related to the dynamics of the containing global network. As part of this process,  
we will eventually need to relax our assumption that all nodes, even uncoupled nodes, 
are started at exactly the same time (that is, at time $t = 0$).

\subsection{Geometric FANs}

We start with some notational conventions and assumptions that we maintain throughout this section. 
If $(\Net,\In,\Tr)$ is a FAN, then
$\Net = (\mathcal{N},\A,\mathcal{F},\mathcal{E})$ will be proper
asynchronous network with~$k$ nodes, network vector field $\is{F} = (F_1,\dotsc,F_k)$, and well defined 
semiflow~$\Phi=(\Phi_1,\dotsc,\Phi_k)$.  Furthermore, we suppose that $\ECS\in\A$, set
$\mathbf{f}^{\ECS} = \mathbf{Z} = (Z_1,\dotsc,Z_k)$ and assume 
that~$\mathbf{Z}$ determines a smooth flow $\Psi^t = (\psi_1^t,\dotsc,\psi_k^t)$
on~$\Mb$ (automatic if~$\Mb$ is compact).
Since $\mathbf{Z}$ is given by the empty connection structure, 
$\psi_i^t$ determines a flow on $M_i$ for all $i\in \is{k}$. Let $\Mb^\sigma = \prod_{i\in\is{k}}M_i^\sigma$, $\sigma\in \{-,+,0\}$.

\begin{Def}
\label{def:geom}
The FAN $\FAN =(\Net,\In,\Tr) $ is \emph{geometric}
if
\begin{enumerate}
\item[(G)]  For all $i \in \is{k}$, $\In_i, \Tr_i$ are disjoint nonempty closed hypersurfaces in $M_i$ that separate
$M_i$ into nonempty closed connected regions $M_i^-, M_i^0, M_i^+$ with smooth boundary satisfying
\begin{itemize}
\item[(a)] $\partial M_i^- = M_i^-\cap M_i^0 =  \In_i$,
\item[(b)] $\partial M_i^+ = M_i^+\cap M_i^0 = \Tr_i$.
\end{itemize}
\item[(T)] For $i \in \is{k}$, $F_i = Z_i$ on an open neighbourhood of $\In_i\cup\Tr_i$ and is transverse to $\In_i\cup \Tr_i$;
inward pointing for $M_i^0$ along $\In_i$,
outward pointing for $M_i^0$ along $\Tr_i$.
\item[(F)] For each $\XX \in \In$, and $i \in \is{k}$,
there exists a unique smallest $S_i(\XX) \in\real_+$ such that
$\Phi_i(\XX,S_i(\XX))\in \Tr_i$.
\end{enumerate}
\end{Def}
See the following remarks for the geometric implication of these conditions,
and note that the labels \emph{G, T} and \emph{F} refer respectively to
\emph{Geometry, Transversality} and \emph{Function}.

\begin{rems}
\label{if}
(1) Condition (G) implies condition (P1); conditions (G,T) imply condition (P2) (with $t_i(\XX) = 0$,
for all $i \in \is{k}$ and $\XX \in \In$). Condition (P3) follows trivially from (G,T).\\
(2) The geometric conditions on the regions  $M_i^-, M_i^0, M_i^+$ are strong and entail that $\In_i, \Tr_i$ each separate
$M_i$ into two connected components.
For periodic problems, such as daily factory inventory
oscillations or biological rhythms,
$\In_i$, $\Tr_i$ may not disconnect $M_i$ (for example, if $M_i = \mathbb{T}^n$). This would imply that $M_i^- = M_i^+$ 
and allow for trajectories to leave and renter $M_i^0$ or $M_i\smallsetminus M_i^0$. In this regard, we could also allow
$M_i^\pm$ to be empty. \\
(3) Condition~(T) implies that $Z_i | (\In_i\cup\Tr_i)$ is non-vanishing, all $i \in \is{k}$.
\end{rems}
\begin{lemma}
If the FAN $(\Net,\In,\Tr)$ is geometric, then it has a
well-defined transition function $G_0:\In\arr \Tr$ and timing function
$\Ss:\In \arr \real^k_+$. 
\end{lemma}
\begin{proof} Immediate from remarks~\ref{if}(1) and condition (F). \end{proof}

\subsection{Generalizing network function} 
Consider the geometric FAN $\FAN$ shown in figure~\ref{prim3}. Suppose that we initialize at $\is{W} = (\ww_1,\dotsc,\ww_k) \in \Mb^-$. Assume that the state of node $N_i$
enters $\In_i$ at time $T_i \ge 0$: $\Phi_i(\is{W},T_i)=\xx_i \in \In_i$, $i \in \is{k}$ ($T_i = 0$ iff $\ww_i \in \partial M_i^-$). 
Note that if $\ww_i$ is sufficiently close to $\In_i$, all $i \in\is{k}$, then condition (T) implies that each $\Phi_i$ trajectory \emph{will} meet $\In_i$.
\begin{figure}[h]
\centering
\includegraphics[width=\textwidth]{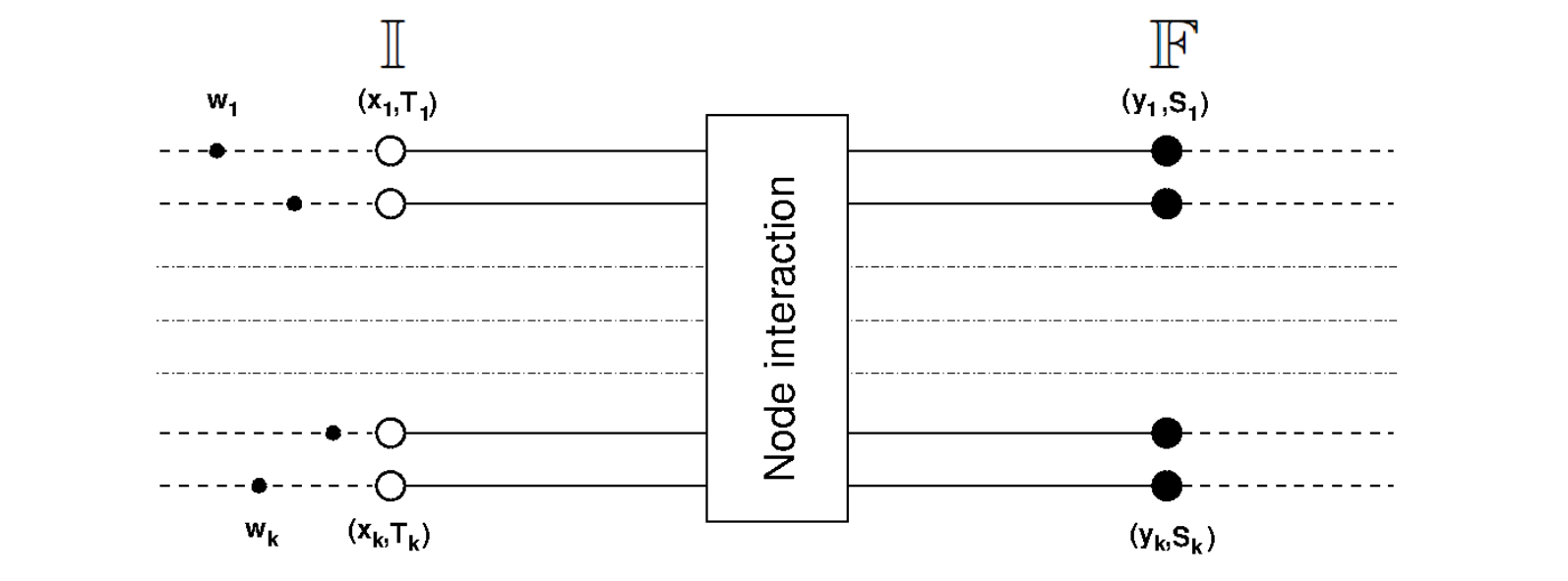}
\caption{Three different ways of viewing a geometric FAN}
\label{prim3}
\end{figure}

If we have $T_1 = \dotsc = T_k = T$, then it follows from condition (F) that there exist times $S_i > T$ such that
$\Phi_i(\is{W},S_i+T) = \Phi_i(\is{X},S_i)=\yy_i \in \Tr_i$, all $i \in \is{k}$. If we do not assume that that the $T_i$ are all equal, then
the $\Phi_i (\is{W},t)$-trajectory may not even meet $\Tr_i$ 
and, even if it does, the time and point of intersection may be different from $S_i+T_i$ and $\yy_i$ 
(we give an example below). 

It is natural to find conditions on $\FAN$ that allow for the initialization of the components $\xx_i$ of $\XX\in\In$ to occur at different times and still achieve network function.  
For example, the terminal times and states should be the same if we either initialize at $\is{W}$ and $t = 0$ or we initialize each component $\xx_i\in\In_i$ at time $T_i$.
The main issue in carrying out this program is that the state of a node $N_j$ not in $M_j^0$ may influence the evolution of a node $N_i$ with state
in $M_i^0$.  For example, the condition for a constraint $\NS\arr N_i$ may depend on $\xx_j \in M_j^- \cup M_j^+$.

Exactly the same problem may occur if we attempt to redefine the semiflow $\Phi$ on $\Net$ by stopping nodes when they reach 
their termination set. That is, if we try to define $\Phi^\star : \Mb^0\times\real_+  \arr \Mb^0$ by 
defining $\Phi_i^\star (\XX,t) = \yy_i$, $t \ge S_i$, it may the case that another node $N_j$ with state in $M_j^0$ 
requires $N_i$ to reach a state in $M_i^+\sm \Tr_i$ in order that $N_j$ reach a terminal state (for example, release of a constraint on $N_j$). 
Similar considerations hold for trajectories entering $M_i^0$.
\begin{exam}
\label{term:deadlock}
Take a three node FAN with $M_i = \real$, $\In_i = \{0\}$, $i \in \is{3}$,
$\Tr_1 = \{1\}$, and $\Tr_2 = \Tr_3 = \{2\}$. Define dynamics according to
\begin{align*}
x' & =  1\\
y' & =  \begin{cases}
1,&\;\text{if } y <1,\; \text{or } x \ge  1.5\\
0,&\;\text{if } y = 1, \;\text{and } x < 1.5
\end{cases}\\
z' & = \begin{cases}
\frac{1}{2}, &\; \text{if } x < 1.5\\
1,&\; \text{if } x \ge 1.5
\end{cases}
\end{align*}
If we continue evolution past the terminal states, then
$(0,0,0)$ is $\Phi$-connected to $(1,2,2)$ and
$\mathbf{S}(0,0,0) = (1,2.5,2.75)$. On the other hand, if we stop
evolution of nodes when they reach their terminal state, then 
$(0,0,0)$ is not $\Phi$-connected to $\Tr$: $N_2$ never attains 
its terminal state and there is a deadlock. Moreover, $N_3$ now 
takes time $4$ to reach its terminal state.
\examend
\end{exam}
\subsection{Weak regularity}
\label{wk:rgular}

Our aim is to give conditions on a FAN $\FAN = (\Net,\In,\Tr)$ that allow for
(a) general initializations of nodes in $\In$: state $\xx_i \in \In_i$ starts at any time $T_i \ge 0$, $i\in\is{k}$,  
(b) stopping of nodes when the termination state is reached without
changing the network function. and (c) replacing $\FAN $ by simpler FAN
where the event structure is trivial outside $\Mb^0$.

We need some new notation and definitions. 
Let $\alpha$ be a connection structure. If $i \in\is{k}$, the node $N_i$ is \emph{linked in $\aa$} if
$\aa$ has an edge containing $N_i$ as an end point. Let $v(\aa)$ denote the set of nodes linked in $\aa$.

Define
\begin{align*}
E_i&=\set{\XX \in \Mb}{i \in v(\Ee(\XX))}, \;i \in \is{k}, \\
E^\star&= \bigcup_{i \in \is{k}} E_i.
\end{align*}
Observe that $\XX\in E^\star$ if and only if at least one node is linked
at~$\XX$.  We have
$E^\ECS  = \Mb\smallsetminus E^\star$.

Let
$A^0, A^-, A^+$ be disjoint subsets of  $\is{k}$, with $A^0 \cup A^- \cup A^+ = \is{k}$
(at least two of $A^0, A^-, A^+$ must be nonempty).

If $\sigma \in \{-,+,0\}$ and $\aa \in \A$, define $\aa^\sigma \in \CC{k}$ by
\begin{equation}
\label{alphasig}
\aa^\sigma = \{N_j \arr N_i \in \aa \dd j \in (A^\sigma)^\bullet,\; i \in A^\sigma\},
\end{equation}

Let $\sigma \in \{-,+,0\}$ and suppose that for $i \in A^\sigma$, we are given an 
open neighbourhood $W_i^\sigma$ of $M_i^\sigma$ in $M_i$.  Set $\is{W}^\sigma = \prod_{i \in A^\sigma} W_i^\sigma$
and define
\[
\is{W} = \is{W}^0 \times \is{W}^- \times \is{W}^+ \subset \Mb.
\] 
\begin{Def}
(Notation and assumptions as above.)
The asynchronous network $\Net$ has \emph{product structure} on $\is{W}$ if for each $\sigma \in \{-,+,0\}$,
we can find an asynchronous network $\Net^\sigma = (\Nn^\sigma, \A^\sigma,\F^\sigma,\Ee^\sigma)$,
where $\Nn^\sigma$ has nodes $\{N_i \dd i \in A^\sigma\}$ and network phase space $\is{W}^\sigma$,
such that
\[
\Net | \is{W} = \Net^0 \times \Net^- \times \Net^+.
\]
\end{Def}

If $\Net$ has product structure on $\is{W}$, it follows from the results of~\cite[\S 6]{BF0},
that 
for all $\XX =(\XX^0,\XX^-, \XX^+) \in \is{W}$ we have
\begin{eqnarray}
\label{S1}
\Ee(\XX)&=&\Ee^0(\XX^0) \vee \Ee^-(\XX^-)\vee \Ee^+(\XX^+) \\
\label{S2}
\f^{\Ee(\XX)} (\XX) & = & \f_0^{\Ee^0(\XX^0)}(\XX^0) \times \f_-^{\Ee^+(\XX^-)}(\XX^-) \times \f_+^{\Ee^+(\XX^+)}(\XX^+).
\end{eqnarray}
Moreover, for each $\XX \in \is{W}$, there exists $t(\XX) > 0$ such that the forward trajectory $\Phi^{\is{W}}(\XX,t)$ in $\is{W}$ is well-defined and equal to $(\Phi|\is{W})(\XX,t)$, for all
$t \in [0,t(\XX))$. 
\begin{rems}
\label{lprod:rem}
(1) With the notation of~\Ref{alphasig}, equation~\Ref{S1} implies that $\Ee^\sigma(\XX^\sigma) = \Ee(\XX)^\sigma$, for all $\XX =(\XX^0,\XX^-, \XX^+) \in \is{W}$,
$\sigma \in \{-,+,0\}$. \\
(2) Equation~\Ref{S2} implies that if $i \in A^\sigma$, then 
the component
$f_i^{\Ee(\XX)}|\is{W}$ depends only on $\XX^\sigma \in \is{W}^\sigma$. 
\end{rems}

\begin{Def}
\label{def:wkreg}
(Notation as above.)
The FAN $\FAN =(\Net,\In,\Tr) $ is \emph{weakly regular}
if
\begin{enumerate}
\item  $\FAN$ is geometric. 
\item 
There exist open neighbourhoods $V_i$ of 
$\partial M_i^0$ in $M_i$, $i \in \is{k}$, such that 
\begin{enumerate}
\item[(S1)] $\pi_i(E_i)\subset M_i \smallsetminus V_i$, all $i \in \is{k}$.
\item[(S2)] If $A^0, A^-, A^+$ are proper disjoint subsets of  $\is{k}$, with $A^0 \cup A^- \cup A^+ = \is{k}$, 
and $\is{W}^\sigma = \prod_{i\in A^\sigma} (M_i^\sigma \cup V_i)$, $\sigma \in \{0,-,+\}$, then
$\Net$ has product structure on $\is{W} = \is{W}^0 \times \is{W}^- \times \is{W}^+$.
\end{enumerate}
\end{enumerate}
\end{Def}
\begin{rems}
\label{wr-reg-rems}
(1) If $\aa\in\A$ and $i \in v(\aa)$, then $E_\aa\subset E_i$. Hence (S1) implies that $\pi_i(E_\aa) \subset M_i \sm V_i$ if $i \in v(\aa)$.\\
(2) Condition (S1) implies that if the state of node $N_i$ is close to $\In_i \cup \Tr_i$, 
then the node will be uncoupled.  It also follows from (S1) that 
$\prod_{i \in \is{k}} V_i \subset E^\ECS$ and so $E^\ECS$ is a neighbourhood of
$\In \cup \Tr$. \\
(3) It follows from (S1,S2) and remarks~\ref{lprod:rem}(2) that
$\A^\sigma \subset \A$, $\sigma \in \{0,-,+\}$ (for example, if $\sigma = 0$ and $A^0 \ne \emptyset$, choose $\XX^- \in \prod_{i\in A^-} V_i$, $\XX^+ \in  \prod_{i\in A^+} V_i$).\\
(4) Let $\sigma,\eta \in \{0, -, +\}$, $\sigma \ne \eta$. Since $\Net | \is{W} = \Net^0 \times \Net^- \times \Net^+$, there are no connections between $N_i$ and $N_j$, if $i \in A^\sigma$, $j \in A^\eta$.
\end{rems}

The next result will be crucial
for developing the dynamical and structural properties of weakly regular FANs.

\begin{lemma}[Local product structure]
\label{lem:lps}
Let $\FAN = (\Net,\In,\Tr)$ be a weakly regular FAN.
Let $A \subset \is{k}$ be a nonempty subset of $\is{k}$ and set $B = \is{k}\smallsetminus A$. Set $\Mb_B^\pm =  \prod_{i \in B} (M_i^- \cup M_i^+)$. If $\XX = (\XX_A,\XX_B) \in \Mb^0_A \times \Mb^\pm_B$,
there exists $\delta > 0$ such that for $t \in [0,\delta]$ we may write
\[
\Phi(\XX,t) = (\Phi_A(\XX_A,t),\Phi_B(\XX_B,t)).
\]
\end{lemma}
\begin{proof} 
Choose $\delta > 0$ so that for $t \in [0,\delta]$,
$\Phi_i(\XX,t) \in M_i^0 \cup V_i$,  all $i \in A$, and $\Phi_j(\XX,t) \in M_j^-\cup M_j^+ \cup V_j$, all $j \in B$.
It follows from (S2) that we may write
\[
\Phi(\XX,t) = (\Phi_A(\XX_A,t),\Phi_B(\XX_B,t)) \in \Mb^0_A \times \Mb^\pm_B, \; t \in [0,\delta],
\]
where $\Phi_A(\XX_A,t) = \pi_A \Phi(\XX,t)$, $\Phi_B(\XX_B,t) = \pi_B \Phi(\XX,t)$.
(These relations may fail once $\Phi_i(\XX,t)$ exits $V_i$, $i \in \is{k}$.)
\end{proof}

\subsection{Generalized initialization.}
\label{evo}
Let $\FAN = (\Net,\In,\Tr)$ be a weakly regular FAN with semiflow~$\Phi$. 
Set $\InH = \In \times \real^k_+$, $\TrH =  \Tr \times \real^k_+$. We refer to
$\InH$ and $\TrH$ as \emph{generalized} initialization and  termination sets.  

Our first step will be 
to construct \emph{evolution} and \emph{timing} operators 
\[
\begin{aligned}
\EvOp& = (\EvOp_i,\dotsc,\EvOp_k): \InH \times \real_+\arr \Mb \\
\Sw& = (\sw_1,\dotsc,\sw_k):  \HDD \subset\InH  \arr \real^k_+ 
\end{aligned}
\]
that allow for general initialization times.

Let $\is{T} = (T_1,\dotsc,T_k) \in \real_+^k$. Choose the minimal 
sequence $S(\is{T}) = (\tau_j)_{j=1}^p$ satisfying 
$0 \le \tau_1 < \tau_2 < \dotsc < \tau_p$,  
$\tau_1 = \min \sset{T_1, \dotsc, T_k}$, $\tau_p = \max \sset{T_1,\dotsc,T_k}$, and
$\sset{T_1,\dotsc,T_k} = \sset{\tau_1,\dotsc,\tau_p}$.
For each $\ell \in \is{p}$, 
define $J_\ell =\{ i\in \is{k}\dd T_i \le \tau_\ell\}$. 
\begin{prop}
\label{switch}
(Notation and assumptions as above.)
Let $\XX\in \In$ and $\is{T} \in \real_+^k$.
There is a (unique) continuous, piecewise smooth map $\EvOp_{(\XX,\is{T})}:\real_+ \arr \Mb$ 
satisfying 
\begin{enumerate}
\item For all $i \in \is{k}$, $\EvOp_{i,(\XX,\is{T})}(t) = x_i$, $ t \in [0,T_i]$.
\item If $ t \in [\tau_\ell,\tau_{\ell+1}]$, $\ell < p$, then 
\[
\EvOp_{i,(\XX,\is{T})}(t) = \begin{cases}
&\Phi_i(\EvOp_{(\XX,\is{T})}(\tau_\ell),t-\tau_\ell),\; i \in J_\ell\\
& x_i,\;\text{otherwise}.
\end{cases}
\]
\item If $t \ge \tau_p$, then 
\[
\EvOp_{(\XX,\is{T})}(t) = \Phi(\EvOp_{(\XX,\is{T})}(\tau_p),t-\tau_p).
\]
\item If $\is{T} = \is{0}$, then
\[
\EvOp_{(\XX,\is{T})}(t) = \Phi(\XX,t),\; t\in \real_+.
\]
\end{enumerate}
Finally, if we let $H(t)$ denotes the Heaviside step function (with $H(0) = 1$), 
then the  trajectory $\EvOp_{(\XX,\TT)}(t)$, $t \ge 0$, is the solution  $\XX(t)$ of the non-autonomous system
\begin{equation}
\label{hea}
\xx_i'(t)  =  H(T_i - t)F_i(\XX(t)), \; i \in \is{k},
\end{equation}
where $\is{F} = (F_1,\dotsc,F_k)$ is the network vector field.
\end{prop}
\proof  The trajectory $\EvOp_{(\XX,\TT)}(t)$ is defined inductively using (1,2,3) of the statement. 
Each of the inductive steps gives a well-defined piece of trajectory by lemma~\ref{lem:lps}.
Once all the variables are switched on, we use the properness of $\Net$ to
define $\EvOp_{(\XX,\TT)}(t) = \Phi(\YY,t-\tau_p)$, $ t \ge \tau_p$, where $\YY = \EvOp_{(\XX,\is{T})}(\tau_p)$.
The remaining statements of the proposition are immediate. \qed
\begin{rem}
Proposition~\ref{switch} shows that we can start to evolve nodes from the initialization set at different times. Observe 
that the proof depends crucially on the local product structure given by lemma~\ref{lem:lps}. In particular,
without the local product structure there is no guarantee that solutions to~\Ref{hea} exist in the sense of
definition~4.16~\cite{BF0}.  
\end{rem}

\begin{cor}
\label{lem:tf} 
Let $\FAN =(\Net,\In,\Tr)$ be weakly regular. 
The evolution operator $\EvOp: \InH \times \real_+ \arr \Mb$ given by proposition~\ref{switch}
is well defined and continuous in forward time. 
\end{cor}

For $i \in\is{k}$, 
let $\mathcal{D}_i$ be the subset 
of $\InH$ consisting of $(\XX,\TT)$ for which there exists
a (minimal) $\sw_i = \sw_i(\XX,\TT) \geq T_i$ such that 
$\EvOp_i((\XX,\TT),\sw_i)\in\Tr_i$. 
Set $\HDD = \bigcap_{i\in\is{k}} \mathcal{D}_i$ and define
the \emph{timing function} $\Sw: \HDD \arr \real^k_+$ by
$\Sw(\XX,\TT) = (\sw_1(\XX,\TT),\dotsc, \sw_k(\XX,\TT))$.
Since $\FAN$ is weakly regular, $ \HDD \supset \In$ 
(we identify~$\In$ with the subset $\{(\XX,\is{0})\dd \XX \in \In\}$ of $\InH$).

\begin{Def}
(Notations and assumptions as above.)
If $\HDD = \InH$, we define the \emph{generalized transition function}
$G: \InH \arr \TrH$ by
\[
G(\XX,\TT) = (
(\EvOp_1((\XX,\TT), \sw_1),\dotsc,\EvOp_k((\XX,\TT), \sw_k)),
\Sw(\XX,\TT)
),
\]
where $\Sw(\XX,\TT) = (\sw_1,\dotsc,\sw_k)$ is given by the timing function. 
\end{Def}

\begin{rem}
For $(\XX, \TT)\in\InH$ with $\TT = (T, \dotsc, T)$, $T\in\real_+$
we have $G(\XX, \TT) = (G_0(\XX), \Ss(\XX)+\TT)$.
\end{rem}

\begin{Def}
(Notations and assumptions as above.)
A weakly regular FAN is \emph{regular} if $\HDD = \InH$. 
\end{Def}

\begin{rem}
\label{altnot}
If $\FAN=(\Net,\In,\Tr)$ is weakly regular, we write $\FANw=(\Net,\InH,\TrH)$ 
to emphasise that we require initialization from $\InH =\In \times \real^k_+$
rather than at time zero from~$\In$. In particular, the FAN $\FANw$ achieves 
its network function (that is, $\FAN$ is regular) if
every point of $\InH$ is $\EvOp$-connected to $\TrH$.
\end{rem}
 
\begin{exam}
The example of two trains on a single track railway line with a 
passing loop and stations described in \cite[\S 5.1]{BF0}. 
admits a generalized transition function. 
\end{exam}

\subsection{Hidden deadlocks}

\begin{Def}
Let $\FAN=(\Net,\In,\Tr)$ be a weakly regular FAN with semiflow $\Phi$. A compact
$\Phi$-invariant set $A\subset\Mb^0$ is a \emph{hidden deadlock}
if
\begin{enumerate}
\item  $A \cap \Tr = \emptyset$.
\item $A$ is a deadlock for the FAN $\FANw$. 
\end{enumerate}
\end{Def}

\begin{rems}
(1) Condition (2) of the definition is equivalent to there existing $(\XX,\TT) \in \InH$ and $t > 0$ such that
$\EvOp((\XX,\TT),t) \in A$. Note that by the $\Phi$-invariance of $A$,
$\pi_i(A) \cap \In_i = \emptyset$, all $i \in \is{k}$. Hence, once a $\EvOp$-trajectory
has entered $A$, the subsequent evolution of the nodes 
is given by~$\Phi$ and so the $\EvOp$-trajectory cannot
leave~$A$.  \\
(2) Since $\FAN=(\Net,\In,\Tr)$ is assumed weakly regular, a hidden deadlock 
can never be a deadlock of $\FAN$.
\end{rems}

\begin{figure}[h]
\centering
\includegraphics[width=0.8\textwidth]{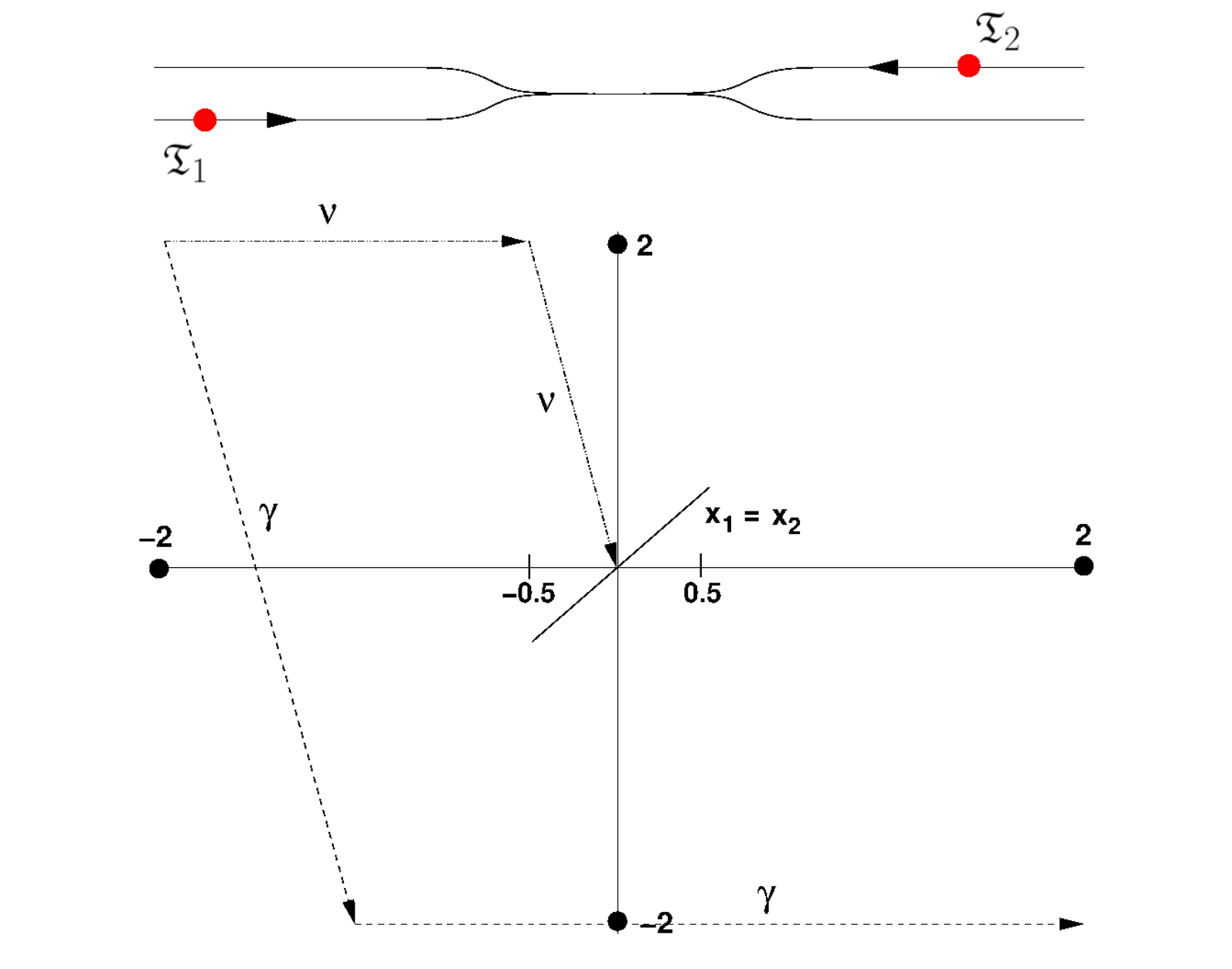}
\caption{Two trains on a partially single track line.}
\label{timings}
\end{figure}
\begin{exam}
\label{ex:SingleTrack}
Referring to figure~\ref{timings}, consider two trains travelling in the
opposite direction on a line which contains a single track segment
$[-0.5,0.5]$ and stations at $\pm 2$. We associate a collision event with the single track segment which
results in both trains stopping.
Train $\mathfrak{T}_1$
starts at $\sset{-2}=\In_1$ and proceeds with velocity~$0.5$;
train $\mathfrak{T}_2$ starts at $\sset{2}=\In_2$ and proceeds
with velocity~$-2.0$. The termination sets are
$\Tr_1 = \{2\}$, $\Tr_2 = \{-2\}$. The trajectory $\gamma$ of figure~\ref{timings}, represents the case where
both trains start at time $t = 0$.
At $t = 1.25$, $\mathfrak{T}_1$ is at point $-1.375$,
and $\mathfrak{T}_2$ is at $-0.5 > -1.375$. Hence there is no collision 
(deadlock) and 
network function is achieved.
On the other hand if $\mathfrak{T}_2$ starts at time $T_2 = 3$ and $\mathfrak{T}_1$  at time $T_1 = 0$,
the trains will collide at the origin at time $t = 4$ (the trajectory~$\nu$ 
of figure~\ref{timings}) and the network is not regular.
\examend
\end{exam}
 
\subsection{FANs of simple type}
The definitions of regularity and weak regularity for a FAN involve network data not directly related to the
network function. We show that given a
weakly regular FAN~$\FAN$, we can construct a simpler variant~$\FAN^c$,  the \emph{core} of $\FAN$, which has the same function as $\FAN$ but
carries only the essential structure of $\FAN$ needed for construction of the generalized transition function. Roughly speaking, we are making the transition 
from viewing the FAN as a (possible) subnetwork of a larger network (relative viewpoint) to an absolute viewpoint
(the FAN is not contained in a larger network).

\begin{Def}
A geometric FAN $\FAN =(\Net,\In,\Tr) $ is of \emph{simple type} if 
for all $i \in \is{k}$, we can choose open neighbourhoods $V_i$ of $\partial M_i^0$ such that 
\begin{enumerate}
\item[(S1)$^c$] $\pi_i(E_i) \subset M_i^0\smallsetminus V_i$.
\item[(S2)$^c$] 
\begin{enumerate}
\item[(a)] The event map $\Ee(\xx_1,\dotsc,\xx_k)$ is locally 
constant as a function of $\xx_i \in M_i^\sigma\cup V_i$, where 
$\sigma \in\{+,-\}$.
\item[(b)] For all $i \in \is{k}$, $f_i^{\Ee(\XX)}(\XX) = Z_i(\xx_i)$ if $\xx_i \in M_i^- \cup M_i^+ \cup V_i$.
\end{enumerate}
\end{enumerate} 
\end{Def}

\begin{rem}
Condition~(S2)$^c$ relates to the concepts of 
structural decomposability, see~\cite[\S 6]{BF0}. 
If (S2a)$^c$ holds, there are no dependencies 
of the event map on the state of a 
node~$N_i$ once its state has exited $M_i^0$.
\end{rem}

\begin{lemma}
If the FAN $\FAN =(\Net,\In,\Tr) $ is of simple type, then
$\FAN$ is weakly regular. If, in addition, $\FAN$ is regular, we say
$\FAN$ is \emph{regular of simple type}.
\end{lemma}
\begin{proof} 
Obviously (S1)$^c$ implies (S1) and (S1,S2)$^c$ imply (S2). 
\end{proof}

We need some new notation before stating our next result.  Suppose that $\FAN =(\Net,\In,\Tr) $ is a weakly regular FAN.
For $\XX \in \Mb$, define mutually disjoint subsets $A^0(\XX), A^\pm(\XX)$ of $\is{k}$ by
\begin{align*}
A^0(\XX) & = \set{i \in \is{k} }{ \xx_i \in M_i^0 \smallsetminus \partial M_i^0},\\
A^-(\XX) & = \set{i \in \is{k} }{ \xx_i \in M_i^-},\\
A^+(\XX) & = \set{i \in \is{k} }{ \xx_i \in M_i^+}.
\end{align*}
We have $\is{k} = A^0(\XX) \cup A^-(\XX)\cup A^+(\XX)$ for all $\XX\in \Mb$.

For each $i \in \is{k}$, fix $\is{c}_i^- \in \In_i$,
and $\is{c}_i^+ \in \Tr_i$. 
Given $\XX \in \Mb$, define $\widetilde{\XX}\in \Mb$ by
\begin{equation*}
\widetilde{\xx}_i = \begin{cases}
\is{c}^-_i,&\;\text{if } i \in A^-(\XX),\\
\xx_i, &\;\text{if } i \in A^0(\XX),\\
\is{c}^+_i,&\;\text{if } i \in A^+(\XX).
\end{cases}
\end{equation*}

Define the event map $\Ee^c: \Mb \arr \A$ by
\[
\Ee^c(\XX) = \Ee(\widetilde{\XX}), \;\XX \in \Mb.
\]
Define $\A^c = \tset{\Ee^c(\XX)}{\XX \in \Mb} \subset \A$ and $\F^c = \tset{\f^\alpha}{\alpha \in \A^c} \subset \F$.
Note that, by weak regularity, $\Ee^c$, and hence $\A^c$ and $\F^c$, do not depend on the specific choice of $\is{c}_i^\pm$ 
used for the definition of $\widetilde{\XX}$. 
Let $\Net^c$ denote the asynchronous network $(\mathcal{N},\A^\s,\F^\s,\Ee^\s)$.

\begin{thm}
\label{thm:simp}
The asynchronous network $\Net^c$ is proper and $\FAN^c =(\Net^c,\In,\Tr) $ 
is a FAN of simple type. If we denote the evolution operator of
$\FAN^c$ by $\EvOpc$ then
for all  $((\XX,\TT),t) \in \InH \times \real_+$ we have
\begin{equation}
\label{eq:core}
\EvOpc_i((\XX,\TT),t) = \EvOp_i((\XX,\TT),t),\;\text{if } \EvOp_i((\XX,\TT),s) \in M_i^0, s\le t.
\end{equation}
If $\FAN$ is regular then so is~$\FAN^c$ and both FANs have the same 
generalized transition function. 
\end{thm}
\begin{proof} The properness of $\Net^c$, weak regularity of $\FAN^c$ and \Ref{eq:core} 
are immediate from the construction of $\FAN^c$.
The remaining statements follow from~\Ref{eq:core}. \end{proof}

\begin{Def}
We refer to the FAN $\FAN^\s =(\Net^\s,\In,\Tr) $ given by theorem~\ref{thm:simp} as the \emph{core}
of  $\FAN= (\Net,\In,\Tr) $ and write 
$\FAN^\s = \text{core}(\FAN)$.
\end{Def}
\subsection{Stopping nodes}
\label{sec:Stopping2}
Let $\FAN = (\Net,\In,\Tr)$ be a weakly regular FAN with associated
timing function $\is{S} = (S_1,\dotsc,S_k): \In \arr \real_+$. In this section
we address the issue raised earlier in the section of stopping nodes when they reach their termination sets. 

We first construct a new FAN $\FAN^\star = (\Net^\star,\In,\Tr)$ that
provides a simple minimal model for the network function of~$\FAN$ and
for which nodes are stopped when they reach their terminal state.

We start by formalizing the process of stopping of nodes.  
Let $A$ be a nonempty subset of $\is{k}$ and $\xi_A$ be the connection structure
$\{N_0 \arr N_i \dd i \in A\}$. Define
$\A^s = \A \cup \set{\xi_A}{A \subset \is{k}}$.  We define an  event
map $\Ee^s: \Mb \arr \A^s$. Let $\XX \in \Mb$. If $\xx_i \in M_i^+$,
$i \in A$, and $\xx_i \notin M_i^+$, $i \notin A$,  define
\[
\Ee^s(\XX) = \Ee(\XX) \vee \xi_A.
\]
For each $\alpha^s = \alpha \vee \xi_A \in \A^s$, define $\f^{\alpha^s}=(f_1,\dotsc,f_k)$ by
\[
f_i(\XX) = \begin{cases}
0, & \text{if } i \in A\\
f_i^\alpha(\XX), & \text{if } i \notin A. 
\end{cases}
\]
Define $\F^s = \{ \f^{\alpha^s} \dd \alpha^s \in \A^s\}$.

Define the asynchronous network $\Net^\star = (\Nn^\star,\A^\star, \F^\star, \Ee^\star)$
by requiring that $\Net^\star$ and $\Net$ have the same node sets ($\Nn^\star = \Nn$) but
take the network phase space of $\Net^\star$ to be $\Mb^0$.
Define $\Ee^\star$ by restriction of $\Ee^s$
to $\Mb^0$ and set $\A^\star = \A^s$ (equal to $\{\Ee^\star(\XX) \dd\XX \in \Mb^0\}$),  and
$\F^\star = \tset{\f^\alpha|\Mb^0}{ \alpha \in \A^\star}$.
\begin{prop}
\label{stop1}
(Notation and assumptions as above.)
The asynchronous network $\Net^\star$ is proper, with well-defined semiflow
$\Phi^\star : \Mb^0 \times \real_+ \arr \Mb^0$.
For all $\XX\in\Mb^0$, $i \in \is{k}$, we have
\begin{equation}
\label{equal}
\Phi_i^\star(\XX,t) = \begin{cases}
\Phi_i(\XX,t),&\text{if } t \le S_i,\\
\Phi_i(\XX,S_i),&\text{if }  t \ge S_i.
\end{cases}
\end{equation}

In particular,
\begin{enumerate}
\item $\FAN^\star$ is a FAN, that is, $\In,\Tr$ satisfy conditions (P1--4)
for~$\Net^\star$.
\item  $\FAN$ and~$\FAN^\star$ have the same
transition and timing functions.
\end{enumerate}
\end{prop}
\begin{proof} A routine computation based, as usual, on lemma~\ref{lem:lps}.  \end{proof}

\begin{rem}
Proposition~\ref{stop1} may fail if $\FAN$ does not satisfy the structural conditions (S1,S2) for weak regularity.
Indeed, $\Net^\star$ may not be proper; even if it is,
\Ref{equal} may fail.
\end{rem}

Proposition~\ref{stop1} shows that for a weakly regular FAN  $\FAN = (\Net,\In,\Tr)$,
stopping nodes at termination does not change network
function. This property also holds for generalized initialization.
\begin{prop}
\label{stop2}
Let $\FAN =(\Net,\In,\Tr)$ be weakly regular.
If we let $\EvOp^\star$ denote the evolution operator for $\FAN^\star$,
then
\begin{enumerate}
\item $\EvOp^\star: \InH \times \real_+ \arr \Mb^0$ is well defined  and continuous in forward time.
\item  For all $i \in \is{k}$, $((\XX,\TT),t) \in \InH \times \real_+ $, we have
\[
\EvOp^\star_i((\XX,\TT),t) = \EvOp_i((\XX,\TT),t), \; \text{if } \EvOp_i((\XX,\TT),t) \in M_i^0.
\]
\item If we denote the timing operator for $\EvOp^\star$ by $\Sw^\star$,
then $\Sw = \Sw^\star$ and the operators have common domain~$\HDD$.
\end{enumerate}
If $\FAN$ is regular, then $\FAN$ and $\FAN^\star$ have identical generalized transition
and timing functions.
\end{prop}
\begin{proof}
Another application of lemma~\ref{lem:lps}.
\end{proof}
\begin{rems}
(1) For weakly regular FANs with generalized initialization, stopping
of nodes upon completion has no effect on the dynamics of the other nodes
whose states are in~$M_i^0$. As a result, the evolution
operator~$\EvOp^\star$ suffices for the analysis of network function even
if nodes may not terminate for some generalized initial conditions.\\
(2) If $\FAN$ is regular, every point $\InH$ is $\EvOp$-connected to $\TrH$
and the $\EvOp^\star$-trajectory of every point in $\InH$ meets $\TrH$
(note our use
of the notation $\FANw = (\Net,\InH,\TrH)$ for a weakly regular FAN, remark~\ref{altnot}).
\end{rems}

We conclude this section with some additional remarks and comments about the conditions of definitions~\ref{def:geom} and \ref{def:wkreg}.

\begin{rems}
\label{if2}
(1) In the sequel it will sometimes be convenient to take $\In_i = \Tr_i$ for some indices $i$. We then have $M_i^0 = \In_i = \Tr_i$ and require that the vector field
$Z_i$ point from $M_i^-$ to $M_i^+$.  \\
(2) Typically, different choices of initialization and 
termination sets satisfying (G,T,F) will be isotopic (by the flow 
of the uncoupled node). Thus, if $\In_i, \In_i'$ are 
initialization sets for $M_i$, there will exist a smooth map 
$\xi: \In_i \arr \real$ such that 
$\In_i' = \tset{ \psi_i^{\xi(\xx)}(\xx)}{\xx \in \In_i}$. 
Similarly for the termination hypersurfaces $\Tr_i$.  We 
use this property later.\\
(3) We do not require that the flow $\psi_i^t$ maps $\In_i$ 
to $\Tr_i$ -- for example, the vector fields $Z_i$ may have 
equilibria in $M_i^0$. However, interactions with other nodes will 
then be needed for the state of $N_i$ to reach the termination 
set~$\Tr_i$. If~$N_i$ is never coupled to other nodes, then 
(F) implies that $\psi_i^t$ maps $\In_i$ to $\Tr_i$. 
\end{rems}

\section{Amalgamation and concatenation of FANs of simple type}
\label{join:sec}

In this section we define the operations of amalgamation and concatenation of FANs that share a common node set.
Throughout, we assume all FANs are of simple type.
\subsection{Preliminaries}

Let $\Net = (\Nn,\A,\F,\Ee)$ be an asynchronous network. 
Recall from section~\ref{wk:rgular} that if $\aa \in \A$, then $v(\aa) \subset \is{k}$ is the set of
nodes linked in $\aa$. 
Define
\[
V(\A) = \bigcup_{\alpha \in \A} v(\alpha)\subset \is{k}.  
\]
\begin{Def}
The asynchronous network $\Net$ 
is \emph{weakly input consistent} if $\ECS\in\A$ and $i \notin v(\alpha)$ implies that
$f_i^\aa = f_i^\ECS$.
\end{Def}
We always assume asynchronous networks are weakly input consistent.

We easily extend the definition of a product of asynchronous networks given in~\cite[\S 6]{BF0}, to FANs. Thus, if
$\FAN^a = (\Net^a,\In^a,\Tr^a)$, $a \in \is{q}$, are FANs (with disjoint node sets), we define
\[
\prod_{a \in \is{q}} \FAN^a = (\prod_{a \in \is{q}} \Net^a,\prod_{a \in \is{q}} \In^a, \prod_{a \in \is{q}} \Tr^a).
\]
Next we define some basic building blocks.
\begin{Def}
Let $\FAN = (\Net,\In,\Tr)$ be a FAN of simple type with $k$ nodes.
\begin{enumerate}
\item $\FAN$ is \emph{trivial} if $\FAN = \prod_{\ell\in\is{k}}\is{S}^\ell$, where
$\is{S}^\ell$ has one node for all $\ell \in \is{k}$.
\item $\NN$ is \emph{indecomposable} if $k> 1$ and $\NN$ cannot be written as a product of FANs.
\item $\NN$ is \emph{stably indecomposable} if $k \ge 3$ and there
is an indecomposable FAN~$\is{P}$ and a trivial
FAN $\is{S} = \prod_{\ell\in\is{s}}\is{S}^\ell$ such that
$\FAN = \is{P}\times \is{S}$.
\item $\NN$ is \emph{elementary} if $\NN = \prod_{i \in \is{p}}\is{P}^i \times \prod_{\ell\in\is{q}}\is{S}^\ell$,
where the $\is{P}^i$ are indecomposable,  the $\is{S}^\ell$ are trivial, and $p,q \ge 0$.
\end{enumerate}
If $\FAN $ is a elementary FAN, let
$A(\FAN) \subsetneq\is{k}$ denote the set of nodes associated with the indecomposable factor(s).
\end{Def}
\begin{rems}
(1) If $\FAN$ is a stably indecomposable FAN with indecomposable 
factor $\is{P}$, then $V(\A)$ may be a proper subset of $A(\FAN)$.
That is, there may be nodes $N_i$ in $\Nn_P$ that are
never linked: $i \notin V(\A)$. Since $\is{P}$ is indecomposable, this means that
certain values of $\xx_i\in M_i^0$ may result in connections between other
nodes being switched on or off. \\
(2) If $\FAN$ is a stably indecomposable FAN with indecomposable
factor $\is{P}$, then for all $\alpha \in \A$,  
there exists $\hat{E}^\alpha \subset \Mb_{A(\FAN)}$ 
such that $E^\aa = \hat{E}^\alpha \times \Mb_{\is{k}\sm A(\FAN)}$.
\end{rems}

For the remainder of this section, assume that all FANs are of simple type and share 
\begin{enumerate}
\item a common node set $\Nn = \{\NS,N_1,\dotsc,N_k\}$; 
\item a common network phase space $\Mb = \prod_{i\in\is{k}} M_i$;
\item a common $\ECS$-admissible network vector field $\f^\ECS$. 
\end{enumerate}
\subsection{Amalgamation}

\begin{Def}
\noindent Elementary FANs $\FAN^a, \FAN^b$ are \emph{independent} if 
\begin{enumerate}
\item $A(\FAN^a) \cap A(\FAN^b) = \emptyset$. 
\item $M_i^{a,\sigma} = M_i^{b,\sigma}$, $\sigma \in \{+,-,0\}$. In particular, $\FAN^a, \FAN^b$ have the same 
initialization and termination sets.
\end{enumerate}
\end{Def}

\begin{prop}
\label{prop:amal}
Let $\Lambda =\{\FAN^a=(\Net^a,\In,\Tr) \dd a \in \is{q}\}$ be a family of elementary FANs such that for all $a,b\in \is{q}$, $a \ne b$,
$\FAN^a$ and $\FAN^b$ are independent. Then there is a unique proper elementary FAN $\FAN=(\Net,\In,\Tr)$ characterised by
\begin{enumerate}
\item $\A = \bigvee_{a\in \is{q}} \A^a$.
\item $\F = \{\f^\alpha \dd \alpha = \alpha_1 \vee \dotsc \vee \alpha_q \in \A\}$ where, for $i\in\is{k}$,
\[
f_i^\alpha = \begin{cases}
&f_i^{\aa_a}, \;\text{if } i \in v(\aa_a),\\
& f_i^\ECS,\;\text{if } i \not\in \cup_{a\in \is{q}}(v(\aa_a)).
\end{cases}
\]
\item $\Ee(\XX) = \bigvee_{a\in \is{q}} \Ee^a(\XX)$, $\XX\in \Mb$.
\end{enumerate}
If the family $\Lambda$ consists of regular FANs, then $\FAN$ is regular.
\end{prop}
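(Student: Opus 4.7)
The plan is to verify that the data $(\A, \F, \Ee)$ prescribed by clauses (1)--(3), together with the shared initialization and termination sets, define a proper elementary FAN of simple type; uniqueness will then be immediate because a FAN is completely determined once $\A$, $\F$, $\Ee$, and the pair $(\In,\Tr)$ are fixed, so no other FAN can satisfy (1)--(3) with the prescribed $(\In,\Tr)$.

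First, I would establish well-definedness of $\f^\alpha$. Any $\alpha \in \bigvee_a \A^a$ has a unique expression $\alpha = \alpha_1 \vee \dotsc \vee \alpha_q$ with $\alpha_a \in \A^a$. By the independence assumption, $A(\FAN^a) \cap A(\FAN^b) = \emptyset$ for $a \ne b$, and since $v(\alpha_a) \subseteq V(\A^a) \subseteq A(\FAN^a)$, the supports $v(\alpha_a)$ are pairwise disjoint. Each index $i \in \is{k}$ therefore lies in at most one $v(\alpha_a)$, so $f_i^\alpha$ is unambiguous. For $i \notin \bigcup_a v(\alpha_a)$, weak input consistency in each $\FAN^a$ gives $f_i^{\alpha_a} = f_i^\ECS$, so the background value $f_i^\ECS$ is consistent with every block.

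Next, I would verify that $\Ee(\XX) = \bigvee_a \Ee^a(\XX)$ is a valid event map and that $\FAN$ is a proper FAN of simple type. By the remark on stably indecomposable factors, each $\Ee^a(\XX)$ depends effectively only on coordinates in $A(\FAN^a)$ and the corresponding switching set factors as $E^{\alpha_a} = \hat{E}^{\alpha_a} \times \Mb_{\is{k}\sm A(\FAN^a)}$; by independence the $A(\FAN^a)$ are pairwise disjoint, so the join of event maps is well-defined pointwise on $\Mb$ and the combined switching sets take the expected product form. Properness and simple type of each $\FAN^a$ then transfer blockwise to $\FAN$. To see that $\FAN$ is elementary, decompose each $\FAN^a$ into its indecomposable and trivial factors and collect them across $a$: the indecomposable pieces live on the disjoint sets $A(\FAN^a)$, and the remaining nodes carry trivial one-node FANs governed by $\f^\ECS$. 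The product structure $\In = \prod_i \In_i$, $\Tr = \prod_i \Tr_i$ is shared across the family by the independence condition $M_i^{a,\sigma} = M_i^{b,\sigma}$ and so persists in $\FAN$.

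For the regularity claim, I would check the regularity axioms of section~\ref{RWRF} by showing they decouple across independent blocks. The event map $\Ee$ and vector fields $\f^\alpha$ act in product fashion on $\bigcup_a A(\FAN^a)$, so the transversality and admissibility conditions reduce to the corresponding conditions for each $\FAN^a$ separately. The main obstacle I anticipate lies precisely here: confirming that an event firing in one independent block does not disturb the switching geometry of another. The condition $M_i^{a,\sigma} = M_i^{b,\sigma}$ is the key input, since it guarantees that the local phase-space decompositions agree on every coordinate, hence the admissible regions are compatible across the family, the block-diagonal semiflow structure is preserved, and regularity passes to the amalgamation.
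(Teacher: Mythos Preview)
Your proposal is correct and follows essentially the same route as the paper: both arguments rely on weak input consistency to make $\f^\alpha$ well-defined on the disjoint blocks $A(\FAN^a)$, and then observe that $\FAN$ decomposes as a product of the indecomposable factors of the $\FAN^a$ together with trivial factors on $\is{k}\sm\bigcup_a A(\FAN^a)$, from which the elementary, simple-type, and regularity properties follow blockwise. The paper's proof is simply a two-sentence summary of the same reasoning you have spelled out in detail.
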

\proof The result follows easily using weak input consistency 
and by noting that $\FAN $ can be written as a product of indecomposable factors, together with the
trivial factors corresponding to nodes in $\is{k}\sm \cup_{a\in\is{q}}A(\FAN^a)$. The FAN $\FAN$ is
obviously of simple type. \qed
\begin{rem}
We denote the FAN constructed in the proposition by $\bigsqcup_{a \in \is{q}} \FAN^a $ and refer to
it as the \emph{amalgamation} of the family $\Lambda$.
\end{rem}
Assume from now on that the FANs of proposition~\ref{prop:amal} are all stably decomposable -- this is no loss of generality
since, by the proposition, an elementary FAN $\FAN$ can be written as an amalgamation of the stably indecomposable FANs determined by
the indecomposable factors of $\FAN$.
For $a \in \is{q}$, let $N^a$ have decomposition $\is{P}^a \times \is{S}^a$, where $\is{P}^a$ is indecomposable and
$\is{S}^a$ is trivial.  Set $A_a = A(\FAN^a)$, $\overline{A}_a = \is{k}\sm A_a$. 
Since $\FAN^a$ is weakly regular, the transition function $G^a_0: \In \arr \Tr$ for $\FAN^a$ 
may be written as the product
\[
G^a_P \times \prod_{\ell \in \overline{A}_a}G_{S,\ell}: 
\In_P^a \times \prod_{\ell\in  \overline{A}_a} \In_\ell \arr \Tr_P^a \times \prod_{\ell\in  \overline{A}_a} \Tr_\ell,
\]
where $\In_P^a = \prod_{j \in A(\FAN^a)}\In_j$, $\Tr_P^a = \prod_{j \in A(\FAN^a)}\Tr_j$ and $G_{S,\ell}$ is independent of
$a$ since $\f^{a,\ECS} = \f^\ECS$, all $a \in \is{q}$.

Summarising, we have a result about the transition functions of amalgamations.
\begin{cor}
Let $\{\FAN^a=(\Net^a,\In,\Tr) \dd a \in \is{q}\}$ be a set of stably indecomposable pairwise indepedent FANs.
The transition function $G_0$ for  $\bigsqcup_{a \in \is{q}} \FAN^a $ may be written uniquely as 
\[
G_0 = \prod_{a\in\is{q}} G^a_P \times \prod_{\ell \in\overline{A}} G_{S,\ell}: 
\prod_{a\in\is{q}}  \In_P^a \times \prod_{\ell\in \overline{A}} \In_i \arr \prod_{a\in\is{q}}  \Tr_P^a \times \prod_{\ell\in \overline{A}} \Tr_i,
\]
where $\overline{A} = \is{k} \sm \cup_{a\in\in{q}} A(\FAN^a)$ parametrizes the trivial factors of $\bigsqcup_{a \in \is{q}} \FAN^a $. 

Similar results hold for the evolution operator and for generalized transition functions if each
FAN $\FAN^a$ is also regular.
\end{cor}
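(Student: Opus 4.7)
The plan is to show that the amalgamation $\bigsqcup_{a \in \is{q}} \FAN^a$ has the structure of a product of the indecomposable factors $\is{P}^a$ of the $\FAN^a$ together with the trivial factors indexed by $\overline{A}$, from which the claimed factorization of $G_0$ is immediate.

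First, I unpack the stably indecomposable decomposition $\FAN^a = \is{P}^a \times \is{S}^a$ given just before the corollary: every $\alpha_a \in \A^a$ satisfies $v(\alpha_a) \subseteq A_a$ and, by the second remark following the definition of an elementary FAN, $E^{\alpha_a} = \hat{E}^{\alpha_a} \times \Mb_{\is{k}\smallsetminus A_a}$. Consequently both admissibility and the nontrivial node functions of $\alpha_a$ depend only on coordinates in $A_a$. Combining this with the construction in Proposition~\ref{prop:amal} and the pairwise disjointness of the $A_a$'s (the first clause of independence), I would verify that for any amalgamated event $\alpha = \bigvee_{a \in \is{q}}\alpha_a$ the node function $f_i^\alpha$ equals $f_i^{\alpha_a}$ for $i \in A_a$ and equals $f_i^\ECS$ for $i \in \overline{A}$, while the admissibility domain factors as $E^\alpha = \prod_{a}\hat{E}^{\alpha_a} \times \Mb_{\overline{A}}$. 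Using the second clause of independence (common phase space decompositions), the joined event map $\Ee = \bigvee_a \Ee^a$ is compatible with the partition $\is{k} = \bigsqcup_a A_a \sqcup \overline{A}$, and therefore the semiflow $\Phi$ of the amalgamation satisfies $\Phi_i(\XX,t) = \Phi_i^a(\XX|_{A_a},t)$ for $i \in A_a$ (where $\Phi^a$ denotes the semiflow of $\is{P}^a$), while $\Phi_\ell(\XX,t)$ is the local flow of $f_\ell^\ECS$ for $\ell \in \overline{A}$.

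Once this coordinatewise splitting of the semiflow is in hand, each transit time $S_i(\XX)$ depends only on $\XX|_{A_a}$ for $i \in A_a$ and only on $x_\ell$ for $\ell \in \overline{A}$. The terminal states $\Phi_i(\XX,S_i(\XX))$ then assemble into the product formula for $G_0$, and uniqueness follows because the partition $\is{k} = \bigsqcup_a A_a \sqcup \overline{A}$ is intrinsic to the family $\{\FAN^a\}_{a \in \is{q}}$. The extension to the evolution operator $\EvOp$ and to generalized transition functions in the regular case is obtained by applying the same splitting argument to the construction of Section~\ref{RWRF}, since generalized initializations in space and time factor block-by-block through the decomposition.

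The main obstacle is the second step: confirming that combining events via $\bigvee$ across the $\FAN^a$ introduces no hidden coupling between distinct blocks $A_a$ and $A_b$. Both clauses of independence are essential here—the disjointness of the $A_a$'s rules out coupling through node functions (by weak input consistency), while the shared phase space decomposition $M_i^{a,\sigma} = M_i^{b,\sigma}$ is what guarantees that the admissibility domains of the combined events intersect cleanly as products over the partition rather than carving out a smaller non-product subset of $\Mb$.
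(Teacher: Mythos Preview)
Your proposal is correct and follows the same approach as the paper. In fact the paper gives no separate proof for this corollary at all: it is introduced with the phrase ``Summarising, we have a result about the transition functions of amalgamations,'' and the content is entirely contained in the proof of Proposition~\ref{prop:amal} (where it is noted that the amalgamation is a product of the indecomposable factors together with trivial factors on $\is{k}\smallsetminus\bigcup_a A(\FAN^a)$) and in the short paragraph preceding the corollary (where each $G_0^a$ is written as $G_P^a\times\prod_\ell G_{S,\ell}$). Your write-up simply makes explicit the block-by-block splitting of the semiflow that underlies that product structure, which is exactly the intended argument.
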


\subsection{Concatenation}
Amalgamation can be viewed as a spatial merging of FANs. We now consider a related variation that is is suggestive of a temporal
merging.
\begin{Def}
Suppose $\FAN^1, \FAN^2$ are elementary FANs (necessarily of simple type).
The FAN $\FAN^1$ \emph{precedes} $\FAN^2$, written $\FAN^1 \prec \FAN^2$, if
\begin{enumerate}
\item $A(\FAN^1) \cap A(\FAN^2) \ne \emptyset$.
\item If $i \in A(\FAN^1) \cup A(\FAN^2) $, then $\Tr^1_i = \In^2_i= M_i^{1,0}\cap M_i^{2,0}$, and $M_i^{1,-} =M_i^{2,0}\cup M^{2,-}$.
\item If $i \notin  A(\FAN^1) \cup A(\FAN^2)$,  then
$M_i^{1,\sigma} = M_i^{2,\sigma}$, $\sigma \in \{+,-,0\}$.  
\end{enumerate}
\end{Def}

Suppose $\FAN^1 \prec \FAN^2$. 
For $a \in \mathbf{2}$, let $E^a_i = \{\XX \in \Mb\dd i\in v(\Ee^a(\XX))\}$,
$E^{a,\star} = \cup_{i\in\is{k}}E^a_i$, 
and $E^a_\alpha$ be the event set corresponding to $\alpha \in \A^a$.

Define 
\[
\A = \{ \aa_1 \vee \aa_2 \dd \exists \XX \in \Mb,\; \alpha_1 = \Ee^1(\XX),\; \aa_2 = \Ee^2(\XX)\}.
\]
For the moment, it is convenient to regard $\aa_1 \vee \aa_2, \beta_1\vee\beta_2$ as distinct elements of $\A$ if
$\{\aa_1,\aa_2\} \ne \{\beta_1,\beta_2\}$, even if $\aa_1 \vee \aa_2=\beta_1\vee\beta_2$. To emphasise this, we write
$\aa = \aa_1 \ovee \aa_2$ to indicate the  particular decomposition of $\alpha$ as $\aa_1 \vee \aa_2$.

If $\aa_1\ovee\aa_2 \in \A$, let $E_{\aa_1\ovee\aa_2} = E_{\aa_1}^1 \cap E_{\aa_2}^2$.
For $\aa\in \A$, define 
\[
E_\aa = \cup_{\aa_1\vee\aa_2 = \aa} E_{\aa_1\ovee\aa_2}. 
\]
\begin{lemma}
\label{admin0}
(Notation and assumptions as above.) If $\aa_1 \ovee \aa_2\in \A$, then $v(\aa_1) \cap v(\aa_2) = \emptyset$.
In particular, 
\[
v(\Ee^1(\XX)) \cap v(\Ee^2(\XX)) = \emptyset,\;\text{for all } \XX \in \Mb.
\]
\end{lemma}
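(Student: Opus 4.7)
The plan is to argue by contradiction, exploiting the structural hypothesis that each $\FAN^a$ is of simple type together with condition (2) in the definition of $\FAN^1 \prec \FAN^2$. Suppose there exist $\alpha_1 \ovee \alpha_2 \in \A$ and an index $i \in v(\alpha_1) \cap v(\alpha_2)$, and pick $\XX \in \Mb$ realizing $\alpha_a = \Ee^a(\XX)$ for $a \in \mathbf{2}$.

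The first step is to invoke the defining property of a FAN of simple type to conclude that the $i$-th coordinate $\xx_i$ must lie in the middle/active region for each network: if $i \in v(\Ee^a(\XX))$, then $\xx_i \in M_i^{a,0}$. Applying this for $a = 1$ and $a = 2$ gives
\[
\xx_i \in M_i^{1,0} \cap M_i^{2,0}.
\]
Since $i \in v(\alpha_1) \subset V(\A^1) \subset A(\FAN^1) \subset A(\FAN^1)\cup A(\FAN^2)$, condition (2) of $\FAN^1 \prec \FAN^2$ applies and yields $M_i^{1,-} = M_i^{2,0} \cup M_i^{2,-}$. Hence $\xx_i \in M_i^{2,0} \subset M_i^{1,-}$, and combining this with the disjointness of the three pieces $M_i^{1,+}, M_i^{1,0}, M_i^{1,-}$ of the simple-type stratification of $M_i$ contradicts $\xx_i \in M_i^{1,0}$. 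This settles the first assertion.

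The ``in particular'' clause is then immediate: for any $\XX \in \Mb$, the pair $(\Ee^1(\XX), \Ee^2(\XX))$ is a legitimate witness in the definition of $\A$, so $\Ee^1(\XX) \ovee \Ee^2(\XX) \in \A$, and applying the first part with $\alpha_a = \Ee^a(\XX)$ gives $v(\Ee^1(\XX)) \cap v(\Ee^2(\XX)) = \emptyset$.

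The main obstacle is the first step: one must be sure that in the formalism of simple-type FANs developed earlier, being linked in an event does force the node's state to sit in the active middle region $M_i^{a,0}$ (and, crucially, outside $M_i^{a,-}$). If this is already recorded as a property of simple-type FANs, the proof is essentially one line of bookkeeping; otherwise it is a matter of unwinding the definition of the event map $\Ee^a$ and the event sets $E^{a}_{\alpha}$ relative to the stratification $M_i = M_i^{a,+} \cup M_i^{a,0} \cup M_i^{a,-}$, which should be immediate from how events are attached to the middle region in the simple-type construction.
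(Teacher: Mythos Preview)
Your approach is the same as the paper's in spirit, but there is one concrete error in the execution. You assert that the three pieces $M_i^{1,+}, M_i^{1,0}, M_i^{1,-}$ of the simple-type stratification are disjoint, and this is what closes your contradiction. They are \emph{not} disjoint: condition~(2) in the definition of $\FAN^1\prec\FAN^2$ says $M_i^{1,0}\cap M_i^{2,0}=\Tr^1_i$ and $M_i^{2,0}\subset M_i^{1,-}$, so $\Tr^1_i\subset M_i^{1,0}\cap M_i^{1,-}$. Thus step~1 alone, giving $\xx_i\in M_i^{1,0}\cap M_i^{2,0}$, is not yet a contradiction --- it only pins $\xx_i$ to $\Tr^1_i$.

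What rescues the argument is exactly the property you flagged as the ``main obstacle'': the simple-type axiom (S1)$^{\mathrm{c}}$ from the earlier section gives the sharper containment $\overline{\pi_i(E^a_{\alpha_a})}\subset M_i^{a,0}\smallsetminus V_i^a$, i.e.\ event-linked coordinates sit in the middle region \emph{bounded away from} $\In_i^a$ and $\Tr_i^a$. The paper's one-line proof simply invokes (S1)$^{\mathrm{c}}$ to conclude $\overline{\pi_i(E^1_{\alpha_1})}\cap\overline{\pi_i(E^2_{\alpha_2})}=\emptyset$, whence $E^1_{\alpha_1}\cap E^2_{\alpha_2}=\emptyset$, contradicting $\alpha_1\ovee\alpha_2\in\A$. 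So your diagnosis of what is needed is right; you just need to drop the false disjointness claim and cite (S1)$^{\mathrm{c}}$ directly.
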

\proof If $i \in v(\aa_1) \cap v(\aa_2)$, then $\overline{\pi_i(E_{\aa_1}^1)}\cap \overline{\pi_i(E_{\aa_2}^2)} = \emptyset$
by (S1)$^c$. Hence $E_{\aa_1} \cap E_{\aa_2} = \emptyset$, contradicting the hypothesis that $\aa_1 \ovee \aa_2\in \A$.  
\qed
\begin{lemma}
\label{admin1}
If $\aa_1 \vee \aa_2 = \beta_1 \vee \beta_2$, $\aa_1 \ovee \aa_2,\beta_1 \ovee \beta_2 \in\A$, and either $\aa_1 \ne \beta_1$ or $\aa_2 \ne \beta_2$, then 
$\overline{E_{\aa_1\ovee\aa_2}}  \cap \overline{E_{\beta_1\ovee\beta_2}} = \emptyset$. 
\end{lemma}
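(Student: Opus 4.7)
The plan is to locate a node $i$ at which the two decompositions place the combined event in regions whose closures must be disjoint, and then to apply exactly the same separation argument used in Lemma~\ref{admin0}.

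First I would show that the partitions of the node set induced by the two decompositions must differ. By Lemma~\ref{admin0}, both $(v(\aa_1),v(\aa_2))$ and $(v(\beta_1),v(\beta_2))$ partition the common set $v(\aa_1)\cup v(\aa_2)=v(\beta_1)\cup v(\beta_2)$. Assume towards a contradiction that $v(\aa_1)=v(\beta_1)$ (so also $v(\aa_2)=v(\beta_2)$). Then for each $i\in v(\aa_1)$, the $i$-th coordinate of the vector field of the combined event $\aa_1\vee\aa_2=\beta_1\vee\beta_2$ would simultaneously equal $f_i^{\aa_1}$ and $f_i^{\beta_1}$; together with the coincidence of node sets this forces $\aa_1=\beta_1$, contradicting the hypothesis $\aa_1\ne\beta_1$ (the case $\aa_2\ne\beta_2$ is handled symmetrically, with the roles of $\FAN^1$ and $\FAN^2$ interchanged).

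Next I would pick $i\in v(\aa_1)\setminus v(\beta_1)$, possibly after swapping $\aa$ and $\beta$. By Lemma~\ref{admin0}, $i\notin v(\aa_2)$, and since $i\in v(\beta_1)\cup v(\beta_2)$ it follows that $i\in v(\beta_2)$. On $E_{\aa_1\ovee\aa_2}=E^1_{\aa_1}\cap E^2_{\aa_2}$ the coordinate $\pi_i$ lies in $\pi_i(E^1_{\aa_1})$, a subset of the node-$i$ event region of $\FAN^1$; on $E_{\beta_1\ovee\beta_2}=E^1_{\beta_1}\cap E^2_{\beta_2}$ the coordinate $\pi_i$ lies in $\pi_i(E^2_{\beta_2})$, a subset of the node-$i$ event region of $\FAN^2$. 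The defining conditions of $\FAN^1\prec\FAN^2$ --- in particular $\Tr^1_i=\In^2_i=M_i^{1,0}\cap M_i^{2,0}$ and $M_i^{1,-}=M_i^{2,0}\cup M_i^{2,-}$ --- push $\pi_i(E^2_{\beta_2})$ into $\overline{M_i^{1,-}}$ while keeping $\pi_i(E^1_{\aa_1})$ inside $M_i^{1,0}$ and bounded away from $\Tr^1_i$. So (S1)$^c$ applied as in the proof of Lemma~\ref{admin0} yields $\overline{\pi_i(E^1_{\aa_1})}\cap\overline{\pi_i(E^2_{\beta_2})}=\emptyset$, and projecting back along $\pi_i$ gives $\overline{E_{\aa_1\ovee\aa_2}}\cap\overline{E_{\beta_1\ovee\beta_2}}=\emptyset$.

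The main obstacle, I expect, is the first step: rigorously excluding $v(\aa_1)=v(\beta_1)$ with $\aa_1\ne\beta_1$. This rests on the fact that an element $\aa\in\A$ is reconstructed uniquely from the node-by-node data of its $\FAN^a$-components, in the spirit of proposition~\ref{prop:amal}(2). Once that uniqueness is in hand, the choice of witness node $i$ and the closure-separation step are direct replays of Lemma~\ref{admin0} at $i$.
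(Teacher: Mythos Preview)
Your overall strategy coincides with the paper's: find a node $j\in v(\aa_1)\cap v(\beta_2)$ and use the simple-type separation condition (S1)$^c$ to conclude that $\overline{\pi_j(E^1_{\aa_1})}\subset M_j^{1,0}\sm V_j^1$ and $\overline{\pi_j(E^2_{\beta_2})}\subset M_j^{2,0}\sm V_j^2$ are disjoint. The paper reaches the set-up for this step with a one-line ``without loss of generality'' that $\aa_1\ne\beta_1$ and $v(\aa_1)\sm v(\beta_1)\ne\emptyset$; your second paragraph is exactly the same argument once that reduction is in place.

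The part that needs repair is your justification for excluding $v(\aa_1)=v(\beta_1)$ with $\aa_1\ne\beta_1$. Your argument appeals to ``the $i$-th coordinate of the vector field of the combined event $\aa_1\vee\aa_2$'', but that vector field is only constructed in Lemma~\ref{admin2}, \emph{using} the present lemma, so invoking it here is circular; and in any case equal admissible vector fields do not in general force equal connection structures. The correct justification is purely combinatorial: by Lemma~\ref{admin0}, $v(\aa_1)\cap v(\aa_2)=\emptyset$, so every edge of $\aa_1\vee\aa_2$ has all its non-$N_0$ endpoints either in $v(\aa_1)$ or in $v(\aa_2)$, and hence $\aa_1$ is recovered from $\aa_1\vee\aa_2$ as the set of edges with non-$N_0$ endpoints in $v(\aa_1)$. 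If $v(\aa_1)=v(\beta_1)$ and $\aa_1\vee\aa_2=\beta_1\vee\beta_2$, this forces $\aa_1=\beta_1$, contradicting the hypothesis. With this replacement your proof is complete and matches the paper's, which simply absorbs this observation into its ``WLOG''.
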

\proof By lemma~\ref{admin0}, we may assume $v(\aa_1)\cap v(\aa_2) = v(\beta_1)\cap v(\beta_2) = \emptyset$. 
Without loss of generality, suppose $\aa_1 \ne \beta_1$ and $v(\aa_1)\sm v(\beta_1) \ne \emptyset$. Since
$v(\aa_1)\cup v(\aa_2) = v(\beta_1)\cup v(\beta_2)$, $\exists j \in v(\aa_1)\cap v(\beta_2)$.  We have 
\[
\overline{\pi_j(E^1_{\aa_1})} \subset M^{1,0}_j \sm V^1_j,\; \overline{\pi_j(E^2_{\beta_2})} \subset M^{2,0}_j \sm V^2_j.
\]
Since $E_{\aa_1\ovee\aa_2}  \subset E^1_{\aa_1}$ and $E_{\beta_1\ovee\beta_2}  \subset E^2_{\beta_2}$,
we have $\overline{E_{\aa_1\ovee\aa_2}}  \cap \overline{E_{\beta_1\ovee\beta_2}} = \emptyset$. \qed
\begin{lemma}
\label{admin2}
Let $\alpha \in \A$. There exists a smooth weakly input consistent admissible vector field $\f^\alpha$ such that if $\aa = \aa_1 \ovee \aa_2$,
and $\XX \in  \overline{E_{\aa_1\ovee\aa_2}}$, we have
\begin{equation}
\label{coneq}
f_i^{\alpha} (\XX)  = \begin{cases}
&f_i^{j,\aa_j}(\XX), \; \text{if } i \in v(\aa_j),\\
&f^\ECS_i(\XX), \;i \notin v(\aa_1)\cup v(\aa_2).
\end{cases}
\end{equation}
In particular, if $\aa \in \A^1\cap \A^2$, we may choose $\f^\alpha$ so that
$\f^{\alpha} | E^j_\alpha = \f^{j,\alpha}|E^j_\alpha$, $j \in\is{2}$.
\end{lemma}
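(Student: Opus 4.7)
The plan is to construct $\f^\alpha$ coordinate by coordinate, using Lemma~\ref{admin0} to show the prescription~(\ref{coneq}) is unambiguous on each closed piece $\overline{E_{\aa_1\ovee\aa_2}}$, Lemma~\ref{admin1} to show the various pieces associated to distinct decompositions of the same $\alpha$ are pairwise separated, and then a standard smoothing argument to glue everything together.

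First I would set up the components that require no work. For each coordinate $i \notin v(\aa) = v(\aa_1)\cup v(\aa_2)$ (the same union for every decomposition, since $v(\aa)$ depends only on $\alpha$), define $f_i^\alpha = f_i^\ECS$ globally on $\Mb$. This gives weak input consistency for free and already matches~(\ref{coneq}) in the $i \notin v(\alpha_1)\cup v(\alpha_2)$ case. Next I would consider an index $i \in v(\aa)$; by Lemma~\ref{admin0}, for each decomposition $\aa = \aa_1\ovee\aa_2$ exactly one of $i \in v(\aa_1)$, $i \in v(\aa_2)$ holds, so the prescribed value $f_i^{j,\aa_j}$ on $\overline{E_{\aa_1\ovee\aa_2}}$ is unambiguous.

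Next I would glue across different decompositions. Lemma~\ref{admin1} says $\overline{E_{\aa_1\ovee\aa_2}}\cap \overline{E_{\beta_1\ovee\beta_2}}=\emptyset$ for distinct decompositions, and since $\Mb$ is a smooth manifold (hence normal), I can choose pairwise disjoint open neighborhoods $U_{\aa_1\ovee\aa_2}$ of each $\overline{E_{\aa_1\ovee\aa_2}}$ together with smooth bump functions $\chi_{\aa_1\ovee\aa_2}$ that equal $1$ on $\overline{E_{\aa_1\ovee\aa_2}}$ and are supported in $U_{\aa_1\ovee\aa_2}$. On each such neighborhood define, for $i\in v(\aa_j)$,
\[
f_i^\alpha \;=\; \chi_{\aa_1\ovee\aa_2}\,f_i^{j,\aa_j} \;+\; (1-\chi_{\aa_1\ovee\aa_2})\,f_i^\ECS,
\]
and extend by $f_i^\ECS$ outside $\bigcup U_{\aa_1\ovee\aa_2}$. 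Admissibility of the individual $f_i^{j,\aa_j}$ and of $f_i^\ECS$ passes to the convex combination, provided the bumps are chosen to depend only on the input variables relevant to node $N_i$ in the coupling structure of $\aa$; this can always be arranged because the separation in Lemma~\ref{admin1} is witnessed on the closures of the projections $\pi_j(E^a_{\aa_a})$ to the individual node factors. Smoothness and weak input consistency are then immediate.

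For the final clause, suppose $\aa\in\A^1\cap \A^2$ with associated event sets $E^1_\alpha, E^2_\alpha$ in the two component FANs. I would enlarge the open neighborhood structure so that for $j\in\is{2}$ the bump function $\chi$ equals $1$ on $E^j_\alpha$; then on $E^j_\alpha$ the definition above reduces to $f_i^\alpha = f_i^{j,\alpha}$ for $i\in v(\alpha)$, and agrees with $f_i^\ECS = f_i^{j,\alpha}$ for $i\notin v(\alpha)$ by weak input consistency of $\FAN^j$. The main obstacle I anticipate is exactly this bookkeeping: making sure the smoothing bumps are themselves admissible (depend only on permitted input variables) so that interpolation does not spoil the coupling structure. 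The required node-wise disjointness from Lemmas~\ref{admin0} and~\ref{admin1} is precisely what makes such admissible bumps available.
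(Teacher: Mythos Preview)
Your argument is correct and follows the same route as the paper: use Lemma~\ref{admin0} and Lemma~\ref{admin1} to see that~\eqref{coneq} defines $\f^\alpha$ consistently on the closed set $\overline{E_\alpha}=\bigcup \overline{E_{\aa_1\ovee\aa_2}}$, then extend smoothly to~$\Mb$. The paper's proof is terser---it simply invokes Whitney's extension theorem or ``a simple partition of unity argument'' for the extension step---whereas you spell out the bump-function construction explicitly and are (appropriately) careful about preserving admissibility and weak input consistency through the interpolation, a point the paper leaves implicit.
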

\proof  It follows by lemma~\ref{admin1}, that we can use~\Ref{coneq} to define $\f^\alpha$ on $\overline{E_\aa}$.
Now use Whitney's extension theorem~\cite{Whitney34}, or a simple partition of unity argument, to extend
$\f^\aa$ to $\Mb$ as a weakly input consistent admissible vector field.  \qed 
\begin{rem}
If $\aa \in \A$ has a unique representation as $\aa_1\vee\aa_2$, then we can use 
\Ref{coneq} to define $\f^\aa$, all $\XX \in \Mb$. 
\end{rem}

We define the $\A$-structure $\F$ to be $\{\f^\aa \dd \aa \in \A\}$ and the event map by
$\Ee(\XX) = \Ee^1(\XX) \vee \Ee^2(\XX)$, $\XX \in \Mb$.

\begin{prop}\label{concat}
(Notation and assumptions as above.) If
$\FAN^1 \prec \FAN^2$, then
there exists a FAN $\FAN = (\Net,\In,\Tr)$ of simple type
where $\A$, $\F$, $\Ee$ are as defined above and $\In = \In^1$, $\Tr = \Tr^2$.\\

\noindent Moreover, 
$\is{F} | \prod_{i\in\is{k}}(M_i^{1,-}\cup M_i^{1,0}) = \is{F}^1$, and
$\is{F} | \prod_{i\in\is{k}}(M_i^{2,0}\cup M_i^{2,+}) = \is{F}^2$, where
$\is{F}$, $\is{F}^1$, and $\is{F}^2$  denote the network vector fields for $\Net$, $\Net^1$ and $\Net^2$
respectively.
\end{prop}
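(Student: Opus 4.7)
The plan is to construct $\Net=(\Nn,\A,\F,\Ee)$ from the prescribed data, verify that the triple $\FAN = (\Net,\In^1,\Tr^2)$ is a FAN of simple type, and finally establish the two vector-field restriction identities.

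The construction itself is essentially already given in the material preceding the proposition, with internal consistency packaged into Lemmas~\ref{admin0}--\ref{admin2}. I would begin by recording that Lemma~\ref{admin0} makes every join $\aa_1\vee\aa_2$ appearing in $\A$ a genuine disjoint join of vertex sets, so that $v(\aa)=v(\aa_1)\sqcup v(\aa_2)$ is well defined; Lemma~\ref{admin1} guarantees pairwise disjoint closures for the pieces $E_{\aa_1\ovee\aa_2}$ making up each $E_\aa$, so that the prescription~\Ref{coneq} is well defined on $\overline{E_\aa}$; Lemma~\ref{admin2} then promotes it to a smooth, weakly input consistent, admissible vector field $\f^\aa$ on all of $\Mb$ for each $\aa\in\A$. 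The event map $\Ee(\XX)=\Ee^1(\XX)\vee\Ee^2(\XX)$ is therefore $\A$-valued, and inherits lower semicontinuity and $\ECS$-admissibility from $\Ee^1$ and $\Ee^2$; this yields a proper asynchronous network $\Net$.

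I would then verify (P1) and (P2) of Definition~\ref{init-term} for $(\In^1,\Tr^2)$ and check the simple-type axioms. The product structure required in (P1) is immediate; coordinatewise disjointness $\In^1_i\cap\Tr^2_i=\emptyset$ follows for $i$ outside $A(\FAN^1)\cup A(\FAN^2)$ from condition~(3) of~$\prec$ (since $\FAN^1$ and $\FAN^2$ share stratification data there), and for active $i$ from condition~(2) of~$\prec$ together with the joint interface identity $\Tr^1_i=\In^2_i=M_i^{1,0}\cap M_i^{2,0}$. For (P2) I would split the trajectory at the first crossing of the interface $\Tr^1$: before the crossing the $\Net$-evolution coincides with a $\Net^1$-evolution (because $\Ee^2$ vanishes there by the concatenation hypothesis), so monotone exit from $\In^1_i$ is inherited directly from the corresponding property of $\FAN^1$; after the crossing the trajectory cannot re-enter $\In^1_i$ since the relevant coordinates then lie in strata disjoint from $\In^1_i$, and one can invoke (4) of Remarks~\ref{P:rems} on the $\FAN^2$-side. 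The simple-type axioms for $\FAN$ are read off coordinatewise from the concatenated stratification of each $M_i$ produced by conditions~(1)--(3) of~$\prec$, combined with the simple-type axioms for $\FAN^1$ and $\FAN^2$.

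The two restriction identities then follow essentially by construction from Lemma~\ref{admin2}. By the geometric content of conditions~(1)--(3) of~$\prec$, the event support of $\FAN^2$ lies outside the slab $\prod_i(M_i^{1,-}\cup M_i^{1,0})$, so $\Ee^2$ is identically $\ECS$ on this slab; hence $\Ee(\XX)=\Ee^1(\XX)\vee\ECS=\Ee^1(\XX)$, and Lemma~\ref{admin2} with $\aa_2=\ECS$ gives $\f^{\Ee(\XX)}(\XX)=\f^{1,\Ee^1(\XX)}(\XX)$, yielding $\is{F}=\is{F}^1$ on this slab. The symmetric argument on $\prod_i(M_i^{2,0}\cup M_i^{2,+})$ gives $\is{F}=\is{F}^2$. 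The main obstacle is the coordinatewise stratification and disjointness bookkeeping needed to verify (P1), (P2) and the simple-type axioms; once that is in place the vector-field identities drop out directly from the construction and Lemma~\ref{admin2}.
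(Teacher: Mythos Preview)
Your proposal is correct and follows exactly the route the paper intends: the paper's own proof is the single sentence ``The result is immediate from our constructions,'' and what you have written is precisely a careful unpacking of that sentence using Lemmas~\ref{admin0}--\ref{admin2} and the definition of~$\prec$. Your argument for the restriction identities (that on each slab the ``other'' event map is identically~$\ECS$, so $\Ee=\Ee^j$ there and Lemma~\ref{admin2} finishes) is the natural and correct way to read ``immediate from our constructions.''
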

\begin{proof} The result is immediate from our constructions. \end{proof} 

\begin{rems}
(1) We call the FAN given by proposition~\ref{concat} the \emph{concatenation} of $\FAN^1$ and $\FAN^2$
and denote by $ \FAN^2 \conc \FAN^1 $.
Unlike for the amalgamation operation, order matters.  \\
(2) Observe that if $A(\FAN^1) \cap A(\FAN^2) = \emptyset$, then the construction we gave for 
$\FAN^2 \conc \FAN^1$ still works and gives the amalgamation $\FAN^1 \sqcup \FAN^2$.  In this sense, the concatenation
is a generalization of amalgamation. We prefer to keep the two operations separate.
\end{rems}

Suppose that $\FAN^1, \FAN^2$ are regular FANs of simple type and that $\FAN^1 \prec \FAN^2$.
For $j \in \is{2}$, denote the evolution operator, generalized transition function and timing function of $\FAN^j$ by
$\EvOp^j:\Mb^j\times \InH^j\arr \Mb$, $G^j:\InH^j\arr \TrH^j$ and $\Sw^j:\InH^j \arr \real_+^k$ respectively. 
\begin{cor}\label{cor1:conc}
(Notation as above.) 
Suppose that $\FAN^1, \FAN^2$ are regular FANs of simple type and $\FAN^1 \prec \FAN^2$. We have
\begin{enumerate}
\item $ \FAN^2 \conc \FAN^1 $ is a regular FAN of simple type.
\item The evolution operator $\EvOp$, generalized transition function $G$ and timing function $\Sw$ of $ \FAN^2 \conc \FAN^1 $
satisfy
\begin{enumerate}
\item For all $i\in \is{k}$, $(\XX,\is{T}) \in \InH$, $t \in \real_+$,
\[
\EvOp_i((\XX,\TT), t)  = 
\begin{cases}
\EvOp^1_i((\XX,\TT), t), \; t \le \sw^1_i\\
\EvOp^2_i((G^1(\XX,\TT),\Sw^1(\XX,\TT)), t), \; t \ge \sw^1_i.
\end{cases}
\]
\item $G = G^2 \circ G^1$, $\Sw = \Sw^2 \circ \Sw^1$, where for all $(\XX,\TT) \in \InH$, 
\begin{eqnarray*}
(G^2 \circ G^1)(\XX,\TT)&=&G^2(G^1(\XX,\TT),\Sw^1(\XX,\TT)), \\
(\Sw^2 \circ \Sw^1)(\XX,\TT)&=&\Sw^2(G^1(\XX,\TT),\Sw^1(\XX,\TT)).
\end{eqnarray*}
\end{enumerate}
\end{enumerate}
\end{cor}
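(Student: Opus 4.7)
The plan is to treat the three claims sequentially: first establish that $\FAN^2 \conc \FAN^1$ is regular, then exhibit its evolution operator by time-splicing $\EvOp^1$ and $\EvOp^2$, and finally read the composition formulas for $G$ and $\Sw$ off by evaluating at the appropriate transit times.

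\textbf{Regularity of the concatenation.} By Proposition~\ref{concat} the triple $\FAN^2\conc\FAN^1$ is already a FAN of simple type, with network vector field $\is{F}$ that restricts to $\is{F}^j$ on the ``$j$-side'' of the interface $\Tr^1 = \In^2$. Regularity is a local condition at each event set $E_\alpha$. When $\alpha \in \A$ admits a unique decomposition $\alpha = \alpha_1 \ovee \alpha_2$, Lemma~\ref{admin2} gives that $\f^\alpha$ agrees with $\f^{j,\alpha_j}$ on $E^j_{\alpha_j}$, so the transversality/flow-box hypothesis is inherited from the regularity of the factor $\FAN^j$. When $\alpha$ has several decompositions, Lemma~\ref{admin1} shows the corresponding pieces have pairwise disjoint closures, so the verification is still local and reduces to the previous case.

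\textbf{Time-splicing the evolution operator.} Fix $(\XX,\TT)\in\InH$. Regularity of $\FAN^1$ yields $\EvOp^1((\XX,\TT),\cdot)$ with node-$i$ transit time $\sw^1_i$, at which $\EvOp^1_i$ first enters $\Tr^1_i$. By the precedence conditions one has $\Tr^1_i = \In^2_i$ for $i \in A(\FAN^1)\cup A(\FAN^2)$ while the remaining phase-space decompositions agree, so $(G^1(\XX,\TT),\Sw^1(\XX,\TT)) \in \InH^2$ and the claimed piecewise formula is well-defined. Continuity at $t = \sw^1_i$ holds because both branches equal $G^1_i(\XX,\TT)$ there: the first by the definition of transit time, the second as the initial value of $\EvOp^2_i$ when the clock is reset via $\Sw^1(\XX,\TT)$. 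Consistency with the joint event map $\Ee = \Ee^1 \vee \Ee^2$ follows since $\FAN^1$-events are supported upstream of $\In^2$ and $\FAN^2$-events downstream of $\Tr^1$: before $\sw^1_i$ only $\Ee^1$ can act at node $i$, and after it only $\Ee^2$ can. A uniqueness argument using regularity of $\FAN^2\conc\FAN^1$ then identifies this spliced map with $\EvOp$.

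\textbf{Composition formulas, and where the subtlety lies.} Taking $t = \sw_i$ in the spliced formula, and using $\sw_i \geq \sw^1_i$ (a node must first reach $\In^2_i = \Tr^1_i$ before it can enter $\Tr^2_i$), the second branch of the formula applies, yielding $\sw_i = \sw^2_i(G^1(\XX,\TT),\Sw^1(\XX,\TT))$ and $G_i(\XX,\TT) = G^2_i(G^1(\XX,\TT),\Sw^1(\XX,\TT))$, which are precisely the stated compositions. The main technical hurdle is the asynchrony of the times $\sw^1_i$: during the overlap interval $[\min_i \sw^1_i,\, \max_i \sw^1_i]$ some nodes already follow $\FAN^2$-dynamics while others are still under $\FAN^1$-dynamics, so the joint event map $\Ee^1\vee\Ee^2$ must be coherent throughout this window. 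This is exactly what Lemmas~\ref{admin0}--\ref{admin2} deliver, most importantly via the node-disjointness $v(\Ee^1(\XX)) \cap v(\Ee^2(\XX)) = \emptyset$, which guarantees that routing each node through its own piecewise dynamics creates no conflict.
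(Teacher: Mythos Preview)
Your proposal is correct and follows the same approach as the paper, which simply states that the result is ``immediate from the definitions and constructions''; you have unpacked the details the paper leaves implicit, in particular the time-splicing of the evolution operator and the role of Lemmas~\ref{admin0}--\ref{admin2} in handling the overlap interval where some nodes are already under $\FAN^2$-dynamics while others are still under $\FAN^1$-dynamics.
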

\proof It is immediate from the definitions and constructions that if $\FAN^1, \FAN^2$ are regular FANs of simple type so
is $ \FAN^2 \conc \FAN^1 $. The remaining statements follow immediately. \qed

\begin{cor}
Suppose that $\FAN^1,\FAN^2,\dotsc,\FAN^q$ are regular FANs of simple type and that
$\FAN^j \prec \FAN^{j+1}$, $j = 1,\dotsc,q-1$. Then the concatenation
$\FAN^q \conc \FAN^{q-1} \conc \dotsc \conc \FAN^1$ is a well-defined regular FAN of simple type with
generalized transition function $G = G^q \circ \dotsc \circ G^1$, where $G^j$ is the generalization transition function of $\FAN^j$, $j \in \is{q}$.
\end{cor}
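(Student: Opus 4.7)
The plan is to proceed by induction on~$q$. The cases $q=1$ (trivial) and $q=2$ (which is precisely Corollary~\ref{cor1:conc}) serve as the base. For the inductive step, assume the statement holds for chains of length $q-1$, and set
\[
\tilde{\FAN} = \FAN^{q-1} \conc \FAN^{q-2} \conc \dotsc \conc \FAN^1.
\]
By the inductive hypothesis, $\tilde{\FAN}$ is a well-defined regular FAN of simple type with $\In = \In^1$, $\Tr = \Tr^{q-1}$ and generalized transition function $\tilde{G} = G^{q-1} \circ \dotsc \circ G^1$.

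The heart of the argument is then to verify that $\tilde{\FAN} \prec \FAN^q$, so that Proposition~\ref{concat} and Corollary~\ref{cor1:conc} apply to the pair $(\tilde{\FAN},\FAN^q)$. First I would check condition~(1) of~$\prec$: since $A(\FAN^{q-1}) \subseteq A(\tilde{\FAN})$ (each stably indecomposable factor of $\FAN^{q-1}$ persists through concatenation) and $A(\FAN^{q-1}) \cap A(\FAN^q) \ne \emptyset$ by hypothesis, the intersection $A(\tilde{\FAN}) \cap A(\FAN^q)$ is nonempty. For condition~(2), I would use that, for each $i \in A(\tilde{\FAN}) \cap A(\FAN^q)$, the termination zone of $\tilde{\FAN}$ at node~$i$ coincides with $\Tr^{q-1}_i$, which equals $\In^q_i$ by $\FAN^{q-1} \prec \FAN^q$; the corresponding identifications of the $M_i^{\sigma}$ zones come from unwinding the phase-space decomposition through the chain. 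Condition~(3), for nodes $i \notin A(\tilde{\FAN}) \cup A(\FAN^q)$, reduces to the cascade of matchings $M_i^{j,\sigma} = M_i^{j+1,\sigma}$ supplied by each individual hypothesis $\FAN^j \prec \FAN^{j+1}$, $j \le q-1$.

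Once $\tilde{\FAN} \prec \FAN^q$ is established, Corollary~\ref{cor1:conc} yields that $\FAN^q \conc \tilde{\FAN}$ is a regular FAN of simple type with $\In = \In^1$ and $\Tr = \Tr^q$, and the composition formula for generalized transition functions gives
\[
G \;=\; G^q \circ \tilde{G} \;=\; G^q \circ G^{q-1} \circ \dotsc \circ G^1,
\]
completing the induction.

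The main obstacle is the bookkeeping in verifying conditions (2) and (3) of~$\prec$: one has to track how the $M_i^{\sigma}$ decomposition at each node accumulates through successive concatenations, and confirm that the cumulative structure at stage $q-1$ is still compatible with the structure prescribed by $\FAN^q$ on nodes outside $A(\FAN^q)$. This is really a statement of associativity of~$\conc$ along chains satisfying the $\prec$ relation, and while essentially routine, it is where the detail lies; the composition identity for~$G$ is then a purely formal consequence of iterating the pairwise formula from Corollary~\ref{cor1:conc}.
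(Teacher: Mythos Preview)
Your proposal is correct and follows the same route as the paper: both argue by induction on~$q$, setting $\tilde{\FAN}=\FAN^{q-1}\conc\dotsc\conc\FAN^1$ and invoking Proposition~\ref{concat} and Corollary~\ref{cor1:conc} at the inductive step. The paper is considerably terser --- it simply takes the inductive definition $\FAN^s\conc\dotsc\conc\FAN^1 \defoo \FAN^s\conc(\FAN^{s-1}\conc\dotsc\conc\FAN^1)$ as the content of the proof and asserts that the earlier results apply --- whereas you spell out the verification that the pairwise construction goes through; your remarks about the bookkeeping for conditions~(2) and~(3) of~$\prec$ are exactly the detail the paper suppresses.
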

\proof We inductively define the concatenation $\FAN^s \conc \dotsc \conc \FAN^1$ to be
$\FAN^s \conc (\FAN^{s-1} \conc \dotsc \conc \FAN^1)$, $s > 2$. The result follows from proposition~\ref{concat} and corollary~\ref{cor1:conc}. \qed

\section{A structure theorem for regular FANs of simple type} 
\label{factorsec}

In this section, all FANs will be regular of simple type. This will be no loss of generality 
for our main application as, by theorem~\ref{thm:simp},  we can replace a regular FAN $\FAN$ by $\text{core}(\FAN)$.

\subsection{Primitive, stably primitive, trivial and basic FANs}

In section~\ref{join:sec} we defined trivial, indecomposable, stably indecomposable, and elementary FANs. The operation of concatenation allows us to
define an additional `irreducible' class of simple FAN.
\begin{Def}
Let $\FAN = (\Net,\In,\Tr)$ be a FAN with $k$ nodes.
\begin{enumerate}
\item $\NN$ is \emph{primitive} if $k> 1$ and
\begin{enumerate}
\item $\NN$ is indecomposable, and
\item $\NN$ cannot be written as a concatenation of two regular FANs.
\end{enumerate}
\item $\NN$ is \emph{stably primitive} if $k \ge 3$ and there
is a primitive FAN~$\is{P}$ and a trivial
FAN $\is{S} = \prod_{\ell\in\is{s}}\is{S}^\ell$ such that
$\NN = \is{P}\times \is{S}$.
\item $\FAN$ is \emph{basic} if $\NN=\bigsqcup_{a\in\is{p}}\FAN^a$,
where each $\FAN^a$ is stably primitive. Equivalently, if
$\FAN = \prod_{a \in \is{p}}\is{P}^a \times \prod_{\ell\in\is{q}}\is{S}^\ell$,
where the $\is{P}^a$ are primitive,  the $\is{S}^\ell$ are trivial, and $p,q\ge 0$.
\end{enumerate}
\end{Def}

\begin{prop}\label{isot}
Suppose that $\FAN^a = (\Net,\In^a,\Tr^a)$, $a\in \is{2}$ are basic FANs with common asynchronous network $\Net$.
Then there exists a continuous family $\{\FAN^t = (\Net,\In^t,\Tr^t) \dd t \in [1,2]\}$ of basic FANs connecting $\FAN^1$ to $\FAN^2$, and
smooth isotopies $J_i:M_i\times [1,2] \arr M_i$, $i \in \is{k}$,  such that for all $i\in\is{k}$,
\begin{enumerate}
\item $J_i(\In_i^1,1) = \In^1_i$, $J_i(\Tr_i^1,1)=\Tr^1_i$.
\item $J_i(\In_i^1,2) = \In^2_i$, $J_i(\Tr_i^1,2)=\Tr^2_i$.
\item The isotopy $J_i$ is the identity outside $(M_i^{1,0}\Delta M_i^{2,0})\cup V_i^1\cup V_i^2$ ($V_i^j$ is the open subset of $M_i$ used for the definition of
simple type, $j \in \is{2}$, and $M_i^{1,0}\Delta M_i^{2,0}$ is the symmetric difference).
\end{enumerate}
\end{prop}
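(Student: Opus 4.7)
The plan is to construct the family $\{\FAN^t\}$ by fixing the asynchronous network $\Net$ and deforming only the initialization and termination data along isotopies $J_i$ that we build node by node. Throughout, $\FAN^t$ will be defined by $\In^t_i = J_i(\In_i^1,t)$, $\Tr^t_i = J_i(\Tr_i^1,t)$, $M_i^{t,\sigma} = J_i(M_i^{1,\sigma},t)$, and this reduces the proposition to two tasks: (i) constructing the $J_i$ with the required support properties, and (ii) verifying that the resulting $\FAN^t$ is a basic FAN of simple type for every $t\in[1,2]$.

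I would begin by reducing to a single primitive factor. Basicness plus the equality of the underlying networks $\Net$ forces the factorizations of $\FAN^1$ and $\FAN^2$ into stably primitive pieces to share the same node-set partition: the blocks are determined by the combinatorics of $\A$, $\F$, and $\Ee$ alone, so in particular $A(\FAN^1)=A(\FAN^2)=:A$. For $i\in\is{k}\sm A$ the trivial-factor hypothesis gives $M_i^{1,\sigma}=M_i^{2,\sigma}$, so we set $J_i=\id$ there. It remains to build an isotopy on each primitive block.

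For $i\in A$ the data $\{M_i^{a,-},M_i^{a,0},M_i^{a,+}\}$ are determined by two smoothly embedded codimension-one hypersurfaces in $M_i$ that coincide outside the prescribed support set $(M_i^{1,0}\Delta M_i^{2,0})\cup V_i^1\cup V_i^2$. A standard ambient isotopy extension theorem then produces a smooth $J_i:M_i\times[1,2]\arr M_i$ with $J_i(\cdot,1)=\id$, carrying the pair of hypersurfaces of $\FAN^1$ onto those of $\FAN^2$ at $t=2$, and identity outside the prescribed set. This automatically gives the required properties of the $\In_i^t$ and $\Tr_i^t$.

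Finally, one must check that each intermediate $\FAN^t$ is still basic of simple type. Because the isotopy is supported inside a region where no modification of $\A$, $\F$, or $\Ee$ is forced, the event sets $E^\aa$ retain their closure relations with the deformed partition; this is where one uses that the $V_i^a$ were designed precisely to contain the closures of the relevant $\pi_i(E^\aa)$. Regularity and weak input consistency are inherited since they depend only on $\Net$ and on local conditions near events, which are unchanged. The product/primitive decomposition of $\Net$ is a combinatorial invariant, so basicness of $\FAN^t$ follows for all $t$. The main obstacle, and the only step that is not formal, is the ambient isotopy construction in the middle paragraph: one must be careful that the interpolating hypersurfaces remain transverse to the events and respect the $V_i^a$ bookkeeping of simple type, and this is what forces the support condition in item (3) of the statement.
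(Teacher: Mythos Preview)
Your outline is broadly sensible but diverges from the paper's argument in one essential respect and contains a minor slip.

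The slip: for $i\in\is{k}\sm A$ you assert that the trivial-factor hypothesis forces $M_i^{1,\sigma}=M_i^{2,\sigma}$ and set $J_i=\id$. Nothing in the definition of a trivial factor pins down $\In_i$ or $\Tr_i$; two basic FANs sharing $\Net$ may well have different initialization and termination hypersurfaces on trivial nodes. This is harmless---the isotopy needed on such a node is at least as easy to build as on the others---but the reduction as written is not correct, and in fact no reduction to primitive factors is needed.

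The substantive difference is in how the isotopy is produced. You invoke a general ambient isotopy extension theorem and then flag, correctly, that the delicate step is verifying that the intermediate hypersurfaces remain transverse to events and respect the $V_i^a$ bookkeeping of simple type. The paper sidesteps this entirely by a specific, canonical choice: for every $i\in\is{k}$ the isotopy on $M_i$ is generated by the uncoupled vector field $Z_i=f_i^\ECS$. Since simple type is formulated precisely in terms of this flow (trajectories of $f_i^\ECS$ cross $\In_i$ and $\Tr_i$ transversally, and event sets project into $M_i^0\sm V_i$), pushing $\In_i^1$ and $\Tr_i^1$ along $f_i^\ECS$-trajectories automatically keeps every intermediate $\FAN^t$ of simple type; the region between $\In_i^1$ and $\In_i^2$ lies in $M_i^{1,0}\Delta M_i^{2,0}$ by construction, which also gives the support condition (3) for free. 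Thus what you isolate as ``the only step that is not formal'' is, in the paper's argument, dissolved by the choice of isotopy rather than verified after the fact. Your route can be completed, but it replaces a one-line construction with an appeal to a general theorem followed by a nontrivial check.
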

\proof Fix $i \in \is{k}$. We use the vector field $Z_i = f_i^\ECS$ to define an 
isotopy~\cite{Hir} on $M_i$ that takes
$\In^1$ to $\In^2_i$ and $\Tr^1$ to $\Tr^2_i$. We can do this since the assumption of
simple type implies that the region `between' $\In_i^1$ and $\In^2_i$ is contained in $M_i^{1,0}\Delta M_i^{2,0}\subset \cup_{j \in \is{2}}(V_i^j \cup M_i^{j,-})$.
Similarly for the region between $\Tr_i^1$ and $\Tr^2_i$. 
It is straightforward to construct the isotopy so that it is the identity outside $M_i^{1,0}\Delta M_i^{2,0}\cup (V_i^1 \cup V_i^2)$.  \qed

\begin{rem}
In future, if we write $\FAN^1 \overset{\bullet}{=} \FAN^2$, we mean that $\Net^1 = \Net^2$ and that the initialization and termination sets for $\FAN^1$ and $\FAN^2$ are isotopic in the sense of proposition~\ref{isot}. The construction of  isotopies will always be along the lines given above 
and omitted.
\end{rem} 

Let $\FAN^1\overset{\bullet}{=}\FAN^2$ be FANs. 
We write $\FAN^1 \IS \FAN^2$ if $\In^1 = \In^2$, and
$\FAN^1 \TS \FAN^2$ if  $\Tr^1 = \Tr^2$.

\begin{prop}\label{struct1}
Let $\FAN$ be a regular FAN of simple type which is not basic. Suppose that there are
FANs $\is{R}^a,\is{Q}^a$, $a \in \is{2}$, such that $\FAN = \is{R}^1 \conc \is{Q}^1 = \is{R}^2 \conc \is{Q}^2$
and
\begin{enumerate}
\item $\is{Q}^a$ is stably primitive, $a \in \is{2}$.
\item $A(\is{Q}^1) \cap A(\is{Q}^2) \ne \emptyset$.
\end{enumerate}
Then we have
\begin{enumerate}
\item $\is{Q}^1\IS\is{Q}^2$  and $\is{R}^1 \TS \is{R}^2$.
\item $\is{Q}^1 \overset{\bullet}{=} \is{Q}^2$ and $\is{R}^1 \overset{\bullet}{=} \is{R}^2$
\end{enumerate}
In particular, if we can write $\FAN = \is{R}\conc \is{Q}$, where $\is{Q}$ is stably primitive, then
the decomposition is unique up to the choice of initialization and termination sets for $\is{R}, \is{Q}$. 
\end{prop}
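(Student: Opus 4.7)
The plan is to peel the proof into three parts: (a) free set-theoretic identities coming from the definition of concatenation, (b) the intrinsic determination of the primitive factor from $\FAN$ near the initialization set, and (c) the invocation of Proposition~\ref{isot} to upgrade equality of underlying networks into the full $\overset{\bullet}{=}$-isotopy. First I would read off from Proposition~\ref{concat} that $\In^{\is{Q}^a} = \In^{\FAN}$ and $\Tr^{\is{R}^a} = \Tr^{\FAN}$ for $a \in \is{2}$; this immediately gives the equalities $\In^{\is{Q}^1} = \In^{\is{Q}^2}$ and $\Tr^{\is{R}^1} = \Tr^{\is{R}^2}$ as subsets of $\Mb$. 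It then suffices to prove $\Net^{\is{Q}^1} = \Net^{\is{Q}^2}$ and $\Net^{\is{R}^1} = \Net^{\is{R}^2}$, after which $\IS$, $\TS$, and the $\overset{\bullet}{=}$-isotopies all follow.

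The crux will be to show $A(\is{Q}^1) = A(\is{Q}^2)$ and that the corresponding primitive factors $\is{P}^1, \is{P}^2$ agree as asynchronous networks. Fix $i_0 \in A(\is{Q}^1)\cap A(\is{Q}^2)$, which is nonempty by hypothesis. At any state $\XX$ for which $N_{i_0}$ is linked in $\Ee^{\FAN}(\XX)$, I would apply Lemma~\ref{admin0} to each of the two decompositions $\Ee^{\FAN}(\XX) = \Ee^{\is{R}^a}(\XX) \ovee \Ee^{\is{Q}^a}(\XX)$: the component linking $N_{i_0}$ must lie in $\Ee^{\is{Q}^a}$, and hence in its primitive factor $\is{P}^a$. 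Consequently the collection of $\FAN$-events linking $N_{i_0}$ --- a piece of intrinsic data of $\FAN$ --- is realized inside both $\is{P}^1$ and $\is{P}^2$. Building up from $i_0$ by iterating this observation across all nodes linked to $N_{i_0}$ through chains of events, the minimal subnetwork generated from $i_0$ in this way must coincide with each of $\is{P}^1$ and $\is{P}^2$. Since each $\is{P}^a$ is indecomposable, neither can properly contain the other; hence $\is{P}^1 = \is{P}^2$ as asynchronous networks and $A(\is{Q}^1) = A(\is{Q}^2)$. The trivial factors agree on the complementary nodes (all carrying $\f^\ECS$ by weak input consistency), so $\Net^{\is{Q}^1} = \Net^{\is{Q}^2}$.

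Next I would apply the analogous reasoning to the tail decomposition. Since $\FAN = \is{R}^a \conc \is{Q}^a$ with $\is{Q}^a$ now pinned down as an asynchronous network, the data of $\is{R}^a$ is what remains of $\FAN$ once the first-phase events are removed; using Lemmas~\ref{admin0} and~\ref{admin1} together with the shared boundary $\Tr^{\FAN}$, this yields $\Net^{\is{R}^1} = \Net^{\is{R}^2}$. Combined with the first paragraph this gives $\is{Q}^1 \IS \is{Q}^2$ and $\is{R}^1 \TS \is{R}^2$, which is item~(1). For item~(2), since $\is{Q}^1, \is{Q}^2$ are basic FANs sharing a common asynchronous network, Proposition~\ref{isot} directly supplies an isotopy of their initialization and termination sets, establishing $\is{Q}^1 \overset{\bullet}{=} \is{Q}^2$; the same proposition applied to $\is{R}^a$ gives $\is{R}^1 \overset{\bullet}{=} \is{R}^2$. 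The final sentence of the proposition, on uniqueness up to choice of intermediate initialization/termination sets, is then a restatement of what $\overset{\bullet}{=}$-isotopy of the intermediate cut $\Tr^{\is{Q}^a} = \In^{\is{R}^a}$ allows to vary.

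The main obstacle is the indecomposability argument in the second paragraph: one must show cleanly that the set of $\FAN$-events which can act on $N_{i_0}$ near initialization assembles into an indecomposable block intrinsic to $\FAN$, and that this block must coincide with each $\is{P}^a$. The temptation is to argue pointwise at a single $\XX$, but indecomposability is a global statement about the connectivity of events, so the argument really requires a closure/connectedness step --- iterating over all nodes linked to $N_{i_0}$ through chains of first-phase events --- before invoking primitivity of $\is{P}^1$ and $\is{P}^2$. This is precisely where the hypothesis $A(\is{Q}^1) \cap A(\is{Q}^2) \neq \emptyset$ is used: without a common node $i_0$ as an anchor, two different primitive factors could in principle both occur as the first-phase data of $\FAN$ without being forced to coincide.
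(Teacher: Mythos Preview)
Your plan diverges from the paper's argument and, more importantly, has a real gap in the ``temporal'' direction.

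The paper's proof proceeds in two steps. First, it asserts $A(\is{Q}^1)=A(\is{Q}^2)$ from stable primitivity (no dependencies on trivial factors). Second --- and this is the key device you are missing --- it constructs the \emph{intersection FAN} $\is{Q}=\is{Q}^1\cap\is{Q}^2$ by setting $M_i^{\is{Q},0}=M_i^{\is{Q}^1,0}\cap M_i^{\is{Q}^2,0}$ and restricting the event maps. Then either $\is{Q}\TS\is{Q}^1,\is{Q}^2$ and we are done, or one of the $\is{Q}^j$ factors as $\is{T}^j\conc\is{Q}$; but \emph{primitivity} (not merely indecomposability) forces $\is{T}^j$ to be trivial. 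This is where the non-concatenability hypothesis enters, and it is essential.

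Your argument leans entirely on indecomposability (non-product) and a connectivity-of-events picture. That is adequate for the spatial conclusion $A(\is{Q}^1)=A(\is{Q}^2)$, but it does not settle the temporal extent of $\is{Q}^a$: even with identical node sets, $\is{Q}^1$ and $\is{Q}^2$ could have different termination hypersurfaces and hence different event maps on different regions of $\Mb$. Nothing in your chain-of-events argument rules this out; you never invoke the ``cannot be written as a concatenation'' clause of primitivity, which is precisely what the paper's intersection construction exploits.

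There is also a concrete error in your use of Lemma~\ref{admin0}. That lemma says $v(\Ee^{\is{R}^a}(\XX))\cap v(\Ee^{\is{Q}^a}(\XX))=\emptyset$, so $N_{i_0}$ is linked in at most one of the two factors at $\XX$ --- but it does \emph{not} say which one. Your claim that ``the component linking $N_{i_0}$ must lie in $\Ee^{\is{Q}^a}$'' is false in general: if $\XX$ lies in the $\is{R}^a$-phase (i.e.\ $x_{i_0}\in M_{i_0}^{\is{R}^a,0}$), the linking comes from $\is{R}^a$. You would need to restrict to states with $x_{i_0}\in M_{i_0}^{\is{Q}^a,0}$, but that region is exactly what you are trying to pin down.

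Finally, a minor point: Proposition~\ref{isot} is stated for \emph{basic} FANs, and $\is{R}^a$ need not be basic. The isotopy construction still goes through (it only uses $f_i^\ECS$ and the simple-type structure), but you should not cite the proposition directly for $\is{R}^a$.
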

\proof Since $\is{Q}^1$ and $\is{Q}^2$ are stably primitive, it follows that if
$A(\is{Q}^1)\cap  A(\is{Q}^2)\ne\emptyset$ then we must have $A(\is{Q}^1)= A(\is{Q}^2)$ -- there are no dependencies 
on trivial factors. It remains to prove that $\is{Q}^1 \TS  \is{Q}^2$. It is no loss of generality to assume $\is{k} = A(\is{Q}^1)= A(\is{Q}^2)$.
We construct the `intersection' FAN $\is{Q} = \is{Q}^1 \cap \is{Q}^2$. Specifically,
for $i \in \is{k}$, define
\[
M_i^{\is{Q},0} = M_i^{\is{Q}^1,0}\cap M_i^{\is{Q}^2,0},\quad M_i^{\is{Q},\sigma} = M_i^{\is{Q}^1,\sigma}\cup M_i^{\is{Q}^2,\sigma},\;\sigma\in \{+,-\}.
\]
Define the event map for $\is{Q}$ by restriction of the event maps for $\is{Q}^j$, $j \in \is{2}$.  Either 
$\is{Q}\TS\is{Q}^1,\is{Q}^2$ and we are done or not. If not, then we can write one of $\is{Q}^1$, $\is{Q}^2$ as a concatenation with $\is{Q}$. But stable primitivity implies that
if $\is{Q}^j = \is{T}^j \conc \is{Q}$, then $\is{T}^j$ is trivial, $j \in \is{2}$. \qed

\vspace*{0.1in}

If $\FAN = \is{R} \conc \is{Q}$ is the decomposition given by
proposition~\ref{struct1} and 
$ \is{Q} = \is{P} \times \is{S}$, 
where $\is{P}$ is the primitive factor of $\is{Q}$, then we take
\begin{enumerate}
\item $\In^{\is{R}}_i=\Tr^{\is{Q}}_i = \In_i$, if $i \in \is{k}\sm A(\is{Q})$,
\item $\In^{\is{R}}_i = \Tr^{\is{Q}}_i$, $i \in A(\is{Q})$.
\end{enumerate} 
\begin{prop} \label{fact}
Let $\FAN$ be a regular FAN of simple type which is not basic.  There exist FANs $\FAN^1$, $\is{R}^1$ such that
\begin{enumerate}
\item $\FAN^1$ is basic and $\FAN = \is{R}^1 \conc \FAN^1$.
\item $\FAN^1$ is maximal in the sense that if $\is{Q}$ is basic and $\FAN = \is{S} \conc \is{Q}$, then 
$A(\is{Q}) \subset A(\FAN^1)$ with equality iff $\is{Q}\overset{\bullet}{=} \FAN^1$.
\item  $\is{N}^1 \IS \FAN$ and
$\is{R}^1 \TS \FAN$.
\end{enumerate}
\end{prop}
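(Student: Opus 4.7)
The plan is to take $\FAN^1$ to be the amalgamation of all stably primitive \emph{first factors} of $\FAN$ under concatenation. Let $\mathcal{Q}$ denote the collection of equivalence classes, under $\overset{\bullet}{=}$, of stably primitive regular FANs of simple type $\is{Q}$ for which there exists a regular FAN of simple type $\is{S}_\is{Q}$ with $\FAN = \is{S}_\is{Q}\conc\is{Q}$. Since $\FAN$ is not basic, some nontrivial concatenation decomposition exists; iterating on the right-hand factor until a stably primitive piece is isolated shows $\mathcal{Q}\ne\emptyset$. By Proposition~\ref{struct1}, any two distinct elements $\is{Q}_a,\is{Q}_b\in\mathcal{Q}$ must satisfy $A(\is{Q}_a)\cap A(\is{Q}_b) = \emptyset$, since otherwise they would coincide up to initialization/termination data.

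Disjointness of the $A$-sets, together with the fact that every $\is{Q}\in\mathcal{Q}$ inherits its phase-space stratification and $\ECS$-admissible vector field from the ambient $\FAN$, makes $\mathcal{Q}$ pairwise independent in the sense of Proposition~\ref{prop:amal}. Set
\[
\FAN^1 \defoo \bigsqcup_{\is{Q}\in\mathcal{Q}}\is{Q},
\]
which is a regular elementary FAN by that proposition and is basic by construction. Next, define $\is{R}^1$ to have initialization set equal to the termination set of $\FAN^1$, termination set equal to $\Tr$ (the termination set of $\FAN$), and event structure obtained by restricting the event structure of $\FAN$ to the complement of the events already consumed by $\FAN^1$; nodes outside $A(\FAN^1)$ see the same dynamics as in $\FAN$. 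Since each $\is{Q}\in\mathcal{Q}$ operates on its own disjoint node-block $A(\is{Q})$, the individual decompositions $\FAN = \is{R}_\is{Q}\conc\is{Q}$ can be performed in parallel and glue to $\FAN = \is{R}^1\conc\FAN^1$.

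For maximality, suppose $\is{Q}$ is basic with $\FAN = \is{S}\conc\is{Q}$, and decompose $\is{Q} = \bigsqcup_a \is{Q}_a$ into stably primitive factors. Each $\is{Q}_a$ can be isolated as its own first factor of $\FAN$ by amalgamating the remaining $\is{Q}_b$, $b\ne a$, into $\is{S}$; thus $\is{Q}_a\in\mathcal{Q}$ and $A(\is{Q}) = \bigsqcup_a A(\is{Q}_a)\subset A(\FAN^1)$. Equality forces $\mathcal{Q}$ to be exhausted by the $\is{Q}_a$, and Proposition~\ref{struct1} then gives $\is{Q}\overset{\bullet}{=}\FAN^1$. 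The conditions $\FAN^1\IS\FAN$ and $\is{R}^1\TS\FAN$ are immediate from the choices of initialization and termination sets in the construction.

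The hardest step is the middle paragraph: confirming that the separate decompositions $\FAN = \is{R}_\is{Q}\conc\is{Q}$, one per $\is{Q}\in\mathcal{Q}$, are consistent and glue into a single $\is{R}^1$ with $\FAN = \is{R}^1\conc\FAN^1$. This uses weak input consistency (so the $\f^\ECS$ parts match across factors), disjointness of the $A(\is{Q})$'s (so events in different blocks do not interact at the $A$-node level), and a verification of the simple-type and regularity conditions for $\is{R}^1$ near its initialization stratum.
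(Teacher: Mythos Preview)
Your approach is essentially the paper's: the paper's entire proof is the single line ``Repeated application of proposition~\ref{struct1}'', and what you have written is a reasonable unpacking of that sentence---collect all stably primitive first factors (disjoint $A$-sets by Proposition~\ref{struct1}), amalgamate them via Proposition~\ref{prop:amal}, and peel off the remainder $\is{R}^1$.

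One technical point to tighten: you assert that the family $\mathcal{Q}$ is ``pairwise independent in the sense of Proposition~\ref{prop:amal}'', but the definition of independence requires not only $A(\is{Q}_a)\cap A(\is{Q}_b)=\emptyset$ but also that $M_i^{a,\sigma}=M_i^{b,\sigma}$ for all $i$ and $\sigma\in\{+,-,0\}$, i.e.\ identical initialization and termination sets. Under the convention following Proposition~\ref{struct1}, the representative $\is{Q}_a$ has $\Tr^{\is{Q}_a}_i=\In_i$ for $i\notin A(\is{Q}_a)$, whereas $\is{Q}_b$ has $\Tr^{\is{Q}_b}_i\ne\In_i$ for $i\in A(\is{Q}_b)$; so the termination sets of $\is{Q}_a$ and $\is{Q}_b$ disagree on $A(\is{Q}_a)\cup A(\is{Q}_b)$. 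Since you are already working modulo $\overset{\bullet}{=}$, the fix is immediate: before amalgamating, replace each $\is{Q}_a$ by the $\overset{\bullet}{=}$-equivalent representative whose trivial factor on node $i\in A(\is{Q}_b)$ has $\In_i=\Tr_i$ adjusted to match $\Tr^{\is{Q}_b}_i$. With that adjustment the hypotheses of Proposition~\ref{prop:amal} are met and the rest of your argument goes through.
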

\proof Repeated application of proposition~\ref{struct1}. \qed
\begin{rem}
We refer to $\FAN^1$ as a \emph{factor} of $\FAN$. Factors are always assumed to be basic.
\end{rem}
\begin{thm}[Factorization theorem]\label{fact1}
If $\FAN$ is a regular FAN of simple type, we may write
\[
\FAN = \FAN^q \conc \dotsc \conc \FAN^1,
\]
where
\begin{enumerate}
\item $\FAN^j$ is basic, $j \in \is{q}$.
\item The decomposition of $\FAN$ is unique, up to choice of initialization and termination sets, if we require that for $j=1,\dotsc,q-1$, $\FAN^j$ is the maximal
factor of  $(\FAN^q \conc \dotsc\conc \FAN^j)$. 
\end{enumerate}
\end{thm}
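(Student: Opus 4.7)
The plan is to prove existence by a greedy iterative peeling procedure based on Proposition~\ref{fact}, and uniqueness by repeated application of Proposition~\ref{struct1}. If $\FAN$ is itself basic, take $q=1$ and we are done. Otherwise, Proposition~\ref{fact} yields a decomposition $\FAN = \is{R}^1 \conc \FAN^1$ in which $\FAN^1$ is the maximal basic factor, $\is{R}^1 \TS \FAN$, and $\FAN^1 \IS \FAN$. Now repeat on $\is{R}^1$: either $\is{R}^1$ is basic, in which case we set $\FAN^2 = \is{R}^1$ and stop, or else Proposition~\ref{fact} applied to $\is{R}^1$ produces $\is{R}^1 = \is{R}^2 \conc \FAN^2$ with $\FAN^2$ maximal basic in $\is{R}^1$. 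Iterating, one obtains a sequence of identities $\is{R}^{j-1} = \is{R}^{j} \conc \FAN^{j}$, which compose via Corollary (after Corollary~\ref{cor1:conc}) to give $\FAN = \FAN^q \conc \dotsc \conc \FAN^1$ whenever the procedure halts at step $q$ (with $\is{R}^{q-1}$ basic and renamed as $\FAN^q$).

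The key technical point is termination, and this is what I expect to be the main obstacle. The right strictly decreasing invariant is provided by the $\A$-structure: each non-trivial factor $\FAN^j$ contains at least one primitive factor, so its adjacency structure $\A^{\FAN^j}$ is nonempty and, by the construction of concatenation, the event sets $E_\alpha$ of $\FAN^j$ occupy a region of $\Mb$ that is disjoint from the event region of $\is{R}^j$ (its termination set for the active nodes $A(\FAN^j)$ equals the initialization set of $\FAN^j$ on those coordinates, and $M_i^{j,-} = M_i^{\is{R}^j,0}\cup M_i^{\is{R}^j,-}$). Hence the events of $\is{R}^j$ form a strict subset of those of $\is{R}^{j-1}$, and since $|\A|$ is finite, the iteration must terminate.

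For uniqueness, suppose $\FAN = \FAN^q \conc \dotsc \conc \FAN^1 = \is{G}^p \conc \dotsc \conc \is{G}^1$ are two factorizations satisfying the maximality requirement. Applying Proposition~\ref{struct1} in its basic (rather than stably primitive) form to the outermost concatenations $\is{R}^1 \conc \FAN^1$ and (setting $\is{S}^1 = \is{G}^p \conc \dotsc \conc \is{G}^2$) $\is{S}^1 \conc \is{G}^1$ — using maximality to force $A(\FAN^1) = A(\is{G}^1)$ and hence $A(\FAN^1) \cap A(\is{G}^1) \ne \emptyset$ — yields $\FAN^1 \overset{\bullet}{=} \is{G}^1$ and $\is{R}^1 \overset{\bullet}{=} \is{S}^1$. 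Induction on $q$ applied to $\is{R}^1$ then gives $p = q$ and $\FAN^j \overset{\bullet}{=} \is{G}^j$ for all $j$, up to the freedom in initialization and termination sets that is already built into the $\overset{\bullet}{=}$ relation.

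The only subtlety I anticipate beyond termination is verifying that Proposition~\ref{struct1}, as stated for a stably primitive outermost factor, extends cleanly to basic outermost factors. This should follow by amalgamation: a basic FAN is an amalgamation of stably primitive FANs (plus trivial factors), the concatenation operation distributes over amalgamation on disjoint node sets (per the remark following Proposition~\ref{concat}), and maximality picks out exactly the amalgamation of \emph{all} stably primitive pieces that can be peeled off first. Writing this extension carefully is the detail that deserves the most attention; once established, the induction goes through without further issue.
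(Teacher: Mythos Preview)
Your overall strategy---iterate Proposition~\ref{fact} to peel off maximal basic factors, then argue uniqueness from maximality---is exactly the paper's approach. The uniqueness discussion is more elaborate than needed (maximality in Proposition~\ref{fact} already pins down $\FAN^1$ up to $\overset{\bullet}{=}$, so the induction is immediate without revisiting Proposition~\ref{struct1}), but it is not wrong.

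The genuine gap is your termination argument. Finiteness of $|\A|$ does not do the job: the same connection structure $\alpha\in\A$ can occur in several distinct factors, since the event sets $E_\alpha$ for $\is{R}^j$ are typically \emph{smaller} versions of those for $\is{R}^{j-1}$ rather than absent. Strict inclusion of event regions is not a well-ordering, so nothing prevents an infinite descending chain on that basis alone. The paper's argument is different and is what you should use: regularity guarantees that each trajectory from $\In$ to $\Tr$ has finite transit time (hence compact image), and each nontrivial peeled factor consumes a positive amount of that transit time for at least one node $i\in A(\FAN^j)$ (by (P2) and the disjointness of $\In_i^j,\Tr_i^j$). Since there are only finitely many nodes, only finitely many factors can be extracted. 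Replace your $|\A|$-count with this compactness-plus-finite-node-set argument and the proof goes through.
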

\proof The obvious induction, using proposition~\ref{fact}. Note that we only get finitely many factors on account of the compactness of 
trajectories joining $\In$ to $\Tr$ and the finite node set.\qed

\subsection{Conventions for labelling initialization and termination sets.}\label{label}
For $j \in \is{q}$, denote the initialization and termination sets for $\FAN^j$ by $\In^j$ and $\Tr^j$ respectively.
We always have $\In = \In^1$ and $\Tr = \Tr^q$. If $i \in \is{k} \sm A(\FAN^j)$, we take
$\Tr_i^j = \In_i^j = \Tr_i^{j-1}$, $j < q$ and $\Tr_i^q = \Tr_i$. If $i \in A(\FAN^j)$, then 
$\In_i^j = \Tr_i^{j-1} \ne \Tr_i^j$.

\begin{figure}[h]
\centering
\includegraphics[width=0.8\textwidth]{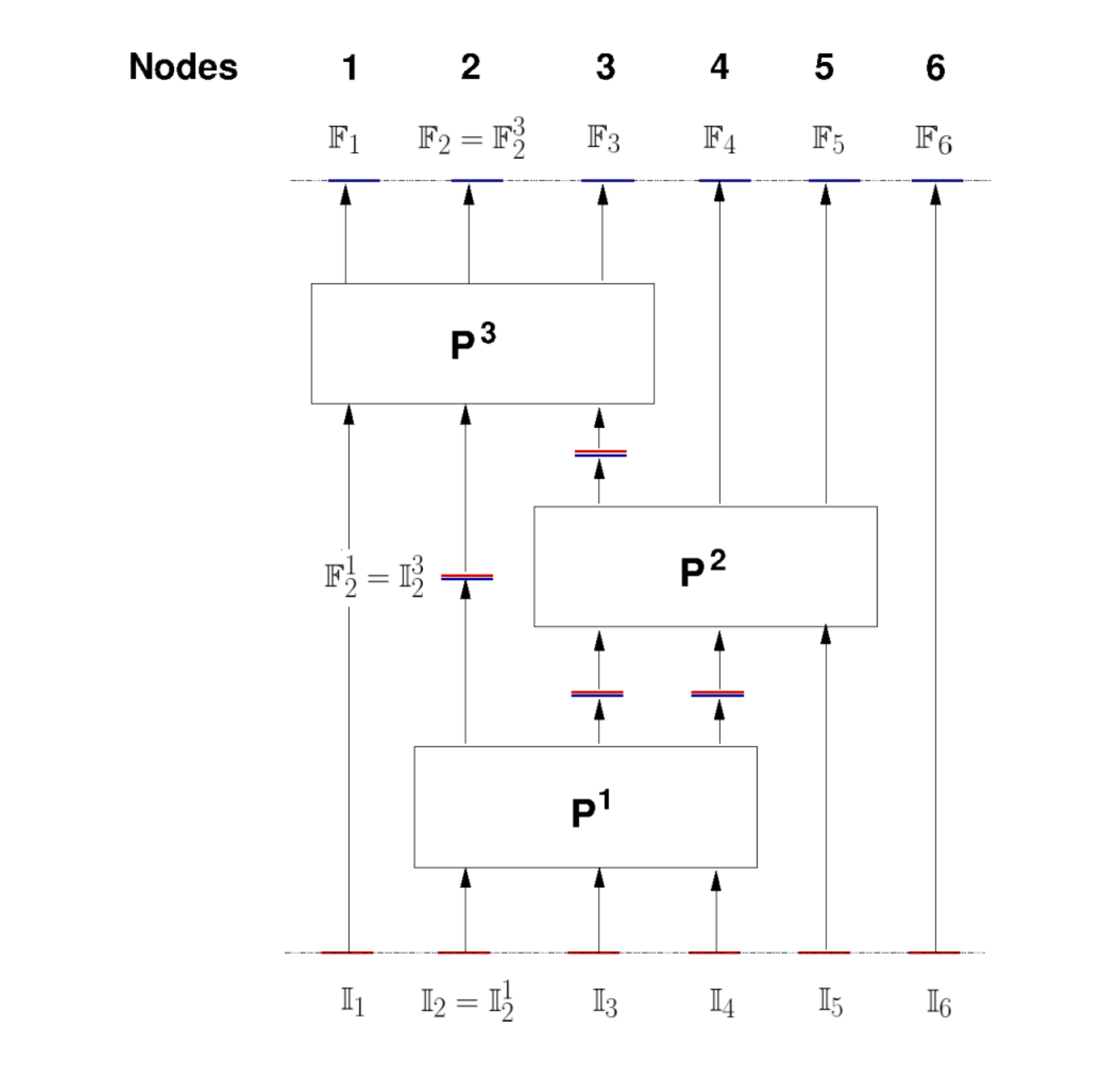}
\caption{A FAN with 6 nodes and 3 stably primitive factors.}
\label{spatio_temp0}
\end{figure}
\begin{exam}\label{order0}
Suppose $k = 6$ and $\FAN = \FAN^3\conc\FAN^2\conc\FAN^1$,
where each factor $\FAN^j$ is stably primitive and $A(\FAN^1) = \{2,3,4\}$, $A(\FAN^2) = \{3,4,5\}$, $A(\FAN^3) = \{1,2,3\}$. 
In figure~\ref{spatio_temp0}, we have shown initialization and termination sets for $N_2$ as well
indicated the new initialization and termination sets for the remaining nodes. 
For example, we have $\In_3^1 = \In_3$, $\Tr_3^1 = \In^2_3$, $\Tr_3^2 = \In_3^3$, $\Tr_3^3 = \Tr_3$.  
\examend
\end{exam}

\subsection{Partial order structure associated to a regular FAN}
Let $\FAN$ be a regular FAN which is nontrivial. Replacing $\FAN$ by $\text{core}(\FAN)$ we may and shall assume $\FAN$ is of simple type.
Let $\FAN = \FAN^q \conc \dotsc \conc \FAN^1$ be the decomposition of $\FAN$ given by theorem~\ref{fact1}. Since each
$\FAN^j$ is basic, we have the unique decomposition $\FAN^j = \bigsqcup_{\ell \in \is{p(j)}} \is{P}^{j,\ell}$, $j \in \is{q}$,
where each $\is{P}^{j,\ell}$ is stably primitive and $1 \le p(j) < k$, $j\in \is{q}$.
Necessarily $A(\is{P}^{j,\ell}) \cap A(\is{P}^{j,\ell'}) = \emptyset$ if $\ell \ne \ell'$.

For each $i \in \is{k}$, define 
\[
\is{q}(i) = \{j \in \is{q} \dd \exists \ell\in\is{p(j)} \; \text{such that } i\in A(P^{j,\ell})\}.
\]
Observe that for each $j \in \is{q}$, there is at most one $\ell\in \is{p(j)}$ such that $i\in A(P^{j,\ell })$. Let $\prec_i$
denote the restriction of the natural order $<$ on $\is{q}$ to $\is{q}(i)$.

Let $\mathfrak{E} = \{ \is{P}^{j,\ell } \dd j \in \is{q},\; \ell \in \is{p(j)}\}$ denote the set primitive events
comprising $\FAN$.  We define a partial order $\prec$ on $\mathfrak{E}$ generated by the relation
\[
\is{P}^{j,\ell } \prec \is{P}^{j+1,\ell'}\;\; \text{if there exists } i \in A(\is{P}^{j,\ell }) \cap A(\is{P}^{j+1,\ell'}). 
\]
Extend $\prec$ by transitivity to $\mathfrak{E}$. Observe that for all $j \in \is{q}$, $\is{P}^{j,\ell }, \is{P}^{j,\ell'}$ 
are not related except by equality if $\ell = \ell'$.  
\begin{lemma}
For each $i \in \is{k}$, the partial order $\prec$ on $\mathfrak{E}$ induces the total order $\prec_i$ on $\is{q}(i)$.
That is, if $a, b \in \is{q}(i)$, $a \prec_i b$, then there exists a (maximal) chain
$ \is{P}^{a,\ell} = \is{P}^{a_1,\ell_1} \prec \dotsc \prec \is{P}^{a_s,\ell_s }=\is{P}^{b,\ell_s }$, 
where $\is{q}(i) \cap [a,b] = \{a_1,\dotsc,a_s\}$ and $\ell_j \in \is{p(a_j)}$, $j \in \is{s}$.
\end{lemma}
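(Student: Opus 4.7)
The plan is to combine a uniqueness observation with a transitive chaining argument. First I would establish that for each $a_j \in \is{q}(i) \cap [a,b]$, the index $\ell_j \in \is{p(a_j)}$ satisfying $i \in A(\is{P}^{a_j, \ell_j})$ is unique: since $\FAN^{a_j}$ is basic, it is an amalgamation of pairwise independent stably primitive factors whose active-node sets are disjoint by the independence hypothesis in Proposition~\ref{prop:amal}. This pins down the candidate chain $\is{P}^{a_1, \ell_1}, \dotsc, \is{P}^{a_s, \ell_s}$ and its maximality (length equal to $|\is{q}(i)\cap[a,b]|$) becomes automatic once the consecutive relations $\is{P}^{a_j, \ell_j} \prec \is{P}^{a_{j+1}, \ell_{j+1}}$ are in place.

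To establish each such consecutive relation, I would induct on the gap $a_{j+1} - a_j$. The base case $a_{j+1} = a_j + 1$ is immediate from the generator of $\prec$, since $i$ lies in both $A(\is{P}^{a_j, \ell_j})$ and $A(\is{P}^{a_{j+1}, \ell_{j+1}})$. For the inductive step, I would construct a chain
\[
\is{P}^{a_j, \ell_j} \prec \is{P}^{a_j + 1, \mu_1} \prec \dotsc \prec \is{P}^{a_{j+1} - 1, \mu_r} \prec \is{P}^{a_{j+1}, \ell_{j+1}}
\]
passing through every intermediate level, with each consecutive pair linked by a shared active node other than $i$ (which is inactive at all levels strictly between $a_j$ and $a_{j+1}$).

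The main obstacle, and the crux of the proof, is showing that such a chain exists and, in particular, begins at the prescribed $\is{P}^{a_j, \ell_j}$ and ends at the prescribed $\is{P}^{a_{j+1}, \ell_{j+1}}$. Here I would invoke the maximality asserted in Proposition~\ref{fact} and Theorem~\ref{fact1}. At each intermediate level $m \in (a_j, a_{j+1})$, any primitive factor whose active-node set were disjoint from the active-node sets of all primitives at level $m-1$ could, by independence, be commuted past $\FAN^{m-1}$ and amalgamated into it, yielding a strictly larger basic factor at that level and contradicting the maximality of the decomposition. This connectivity forces the chain to propagate level by level. To ensure the chain terminates at $\is{P}^{a_{j+1}, \ell_{j+1}}$ specifically, I would track the state of node $i$ through the intermediate dynamics using the labelling convention of Subsection~\ref{label} (in particular $\Tr^m_i = \In^m_i = \Tr^{m-1}_i$ whenever $i \notin A(\FAN^m)$), so that the primitive at level $a_{j+1}$ reached by the chain is uniquely the one whose active-node set contains $i$, namely $\is{P}^{a_{j+1}, \ell_{j+1}}$. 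Transitivity of $\prec$ then assembles the consecutive relations into the required maximal chain.
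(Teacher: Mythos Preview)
The paper states this lemma without proof, so there is nothing to compare against directly. Your overall strategy---uniqueness of $\ell_j$ via disjointness of the $A(\is{P}^{a_j,\ell})$, followed by linking consecutive $a_j,a_{j+1}\in\is{q}(i)$ through the intermediate levels---is the natural one. However, the crucial step does not close.

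Your maximality argument only shows that every primitive $\is{P}^{m,\mu}$ has \emph{some} predecessor at level $m-1$ (equivalently, walking backwards from $\is{P}^{a_{j+1},\ell_{j+1}}$ one always finds a predecessor). It does not force that chain to land on $\is{P}^{a_j,\ell_j}$, and the appeal to the labelling convention of Subsection~\ref{label} is a non sequitur: those identities $\Tr^m_i=\In^m_i=\Tr^{m-1}_i$ record how the initialization and termination hypersurfaces for node~$i$ are propagated through trivial levels, but the chain in $(\mathfrak{E},\prec)$ is a purely combinatorial object built from intersections of the sets $A(\is{P}^{m,\mu})$; the state of node~$i$ plays no role once $i\notin A(\FAN^m)$. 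Concretely, consider four nodes and a maximal decomposition $\FAN=\FAN^3\conc\FAN^2\conc\FAN^1$ with
\[
A(\is{P}^{1,1})=\{1,2\},\quad A(\is{P}^{1,2})=\{3,4\},\quad A(\is{P}^{2,1})=\{3,4\},\quad A(\is{P}^{3,1})=\{1,4\}.
\]
Maximality holds at each stage (no primitive can be pushed to a lower level), and $\is{q}(1)=\{1,3\}$. The only backward chain from $\is{P}^{3,1}$ is $\is{P}^{3,1}\succ\is{P}^{2,1}\succ\is{P}^{1,2}$, so $\is{P}^{1,1}\prec\is{P}^{3,1}$ fails under the generating relation you are using. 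Thus either the lemma requires an additional hypothesis (or a broader generator for~$\prec$) coming from the regularity and simple-type conditions in Section~\ref{RWRF}, or your argument needs a genuinely different mechanism---one that exploits the dynamics, not just the combinatorics of the $A$-sets---to rule out configurations of this kind. As written, the proposal does not supply that mechanism.
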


\begin{exam}
With the notation of figure~\ref{spatio_temp0}, we have $q = 3$, $p(1)=p(2)=p(3) = 1$, 
$\is{P}^1 \prec\is{P}^2 \prec \is{P}^3$, $\is{q}(1) = \{3\}$, $\is{q}(2) = \{1,3\}$, $\is{q}(3) = \{1,2,3\}$, $\is{q}(4) = \{1,2\}$,
$\is{q}(5) = \{2\}$, and $\is{q}(6) = \emptyset$.
\end{exam}
\begin{prop}\label{ff}
(Notation and assumptions as above.)
The partial order $\prec$ on $\mathfrak{E}$ gives the associated network a natural feedforward structure.
\end{prop}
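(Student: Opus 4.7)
The plan is to unpack the definition of $\prec$ and verify that it forces the dependency graph on $\mathfrak{E}$ to be acyclic, which is precisely the feedforward property. The key observation is that the generating relation $\is{P}^{j,\ell}\prec \is{P}^{j+1,\ell'}$ connects only events whose concatenation index differs by exactly one, with the index strictly increasing. Consequently any chain $\is{P}^{a_1,\ell_1}\prec\is{P}^{a_2,\ell_2}\prec\dotsc\prec\is{P}^{a_s,\ell_s}$ in the transitive closure has $a_1<a_2<\dotsc<a_s$, so $\prec$ is irreflexive and antisymmetric.

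Next I would translate this partial order into a concrete feedforward structure on the associated network. Form the directed graph with vertex set $\mathfrak{E}$ and an arc $\is{P}^{j,\ell}\to\is{P}^{j+1,\ell'}$ whenever $A(\is{P}^{j,\ell})\cap A(\is{P}^{j+1,\ell'})\neq\emptyset$, taking transitive closure. The previous paragraph shows this graph is a DAG, and the lemma preceding the proposition shows that restricting to any fixed node $N_i$ with $i\in\is{k}$ recovers the total order $\prec_i$ on $\is{q}(i)$, so each node passes through its governing primitive events in a well-defined sequence.

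Then I would verify that this combinatorial DAG correctly reflects the dynamics. Within a single layer $\FAN^j=\bigsqcup_{\ell\in\is{p(j)}}\is{P}^{j,\ell}$, Proposition~\ref{prop:amal} together with the observation $A(\is{P}^{j,\ell})\cap A(\is{P}^{j,\ell'})=\emptyset$ for $\ell\ne\ell'$ shows the factors are independent, so they contribute parallel (non-interacting) branches. Between layers, Corollary~\ref{cor1:conc} (iterated via the factorization of Theorem~\ref{fact1}) gives $G=G^q\circ\dotsc\circ G^1$ and $\widehat\Phi$ in the form of successive application of the $\widehat\Phi^j$, so on any shared node $i\in A(\is{P}^{j,\ell})\cap A(\is{P}^{j+1,\ell'})$ the output state $\mathrm{Tr}_i^j=\mathrm{In}_i^{j+1}$ of the earlier event feeds the later one, and never the reverse. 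Combining the two scales, information flow between distinct primitive events is precisely along arcs of the DAG.

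The main obstacle is reconciling the dynamical meaning of ``feedforward'' with the combinatorial acyclicity of $\prec$: one must check that no hidden coupling between primitive events is smuggled in through the shared trivial factors or through the $\ECS$-admissible background field $\f^\ECS$. This is handled by the labelling convention of~\S\ref{label} (trivial-factor coordinates pass through identically, $\mathrm{Tr}_i^j=\mathrm{In}_i^{j+1}$ when $i\notin A(\FAN^j)$) together with weak input consistency, which together guarantee that the only dynamical couplings between primitive events are those recorded by $\prec$. Once these observations are in place the proposition follows.
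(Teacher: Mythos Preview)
Your proposal is correct. The paper itself gives no proof of this proposition at all---it is stated and followed immediately by a remark, so the authors evidently regard it as immediate from the construction of $\prec$. Your argument makes explicit what is left tacit: the generating relation $\is{P}^{j,\ell}\prec\is{P}^{j+1,\ell'}$ always strictly increases the concatenation index~$j$, so the transitive closure is a genuine strict partial order and the associated directed graph on~$\mathfrak{E}$ is acyclic, which is the feedforward property. Your further discussion tying the combinatorial DAG to the actual dynamical dependencies (via Proposition~\ref{prop:amal}, Corollary~\ref{cor1:conc}, weak input consistency, and the labelling conventions of \S\ref{label}) goes well beyond anything the paper supplies, but is a reasonable elaboration if one wants ``feedforward'' to carry dynamical and not merely combinatorial content.
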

\begin{rem}
Proposition~\ref{ff} and theorem~\ref{fact1} together imply that every regular FAN determines a natural feedforward 
network. Regularity of the FAN implies there are no feedback loops between events. Of course, there may be
feedback loops within individual events.
\end{rem}
\begin{exam}\label{order}
\noindent We illustrate proposition~\ref{ff} with a more complex example. Referring to figure~\ref{event2x}, assume given a nine node
FAN $\FAN$ of simple type built from eight stably primitive FANs $\is{P}^a,\dotsc, \is{P}^h$. 
The decomposition given by theorem~\ref{fact1} is
\[
\FAN = \is{P}^h\conc (\is{P}^e\sqcup\is{P}^g)\conc(\is{P}^d \sqcup \is{P}^f)\conc\is{P}^b\conc(\is{P}^a \sqcup \is{P}^c)
\]
Note this decomposition is not unique amongst decompositions of minimal length $5$. For example, 
\[
\FAN = (\is{P}^h\sqcup\is{P}^g) \conc (\is{P}^c \sqcup\is{P}^e\sqcup\is{P}^f)\conc\is{P}^d \conc\is{P}^b\conc\is{P}^a 
\]
It is easy to show that all decompositions must have length at least 5, have the same partial order $\prec$, and induce the same total order on the $\is{q}(i)$, $i \in \is{9}$.

\begin{figure}[h]
\centering
\includegraphics[width=0.78\textwidth]{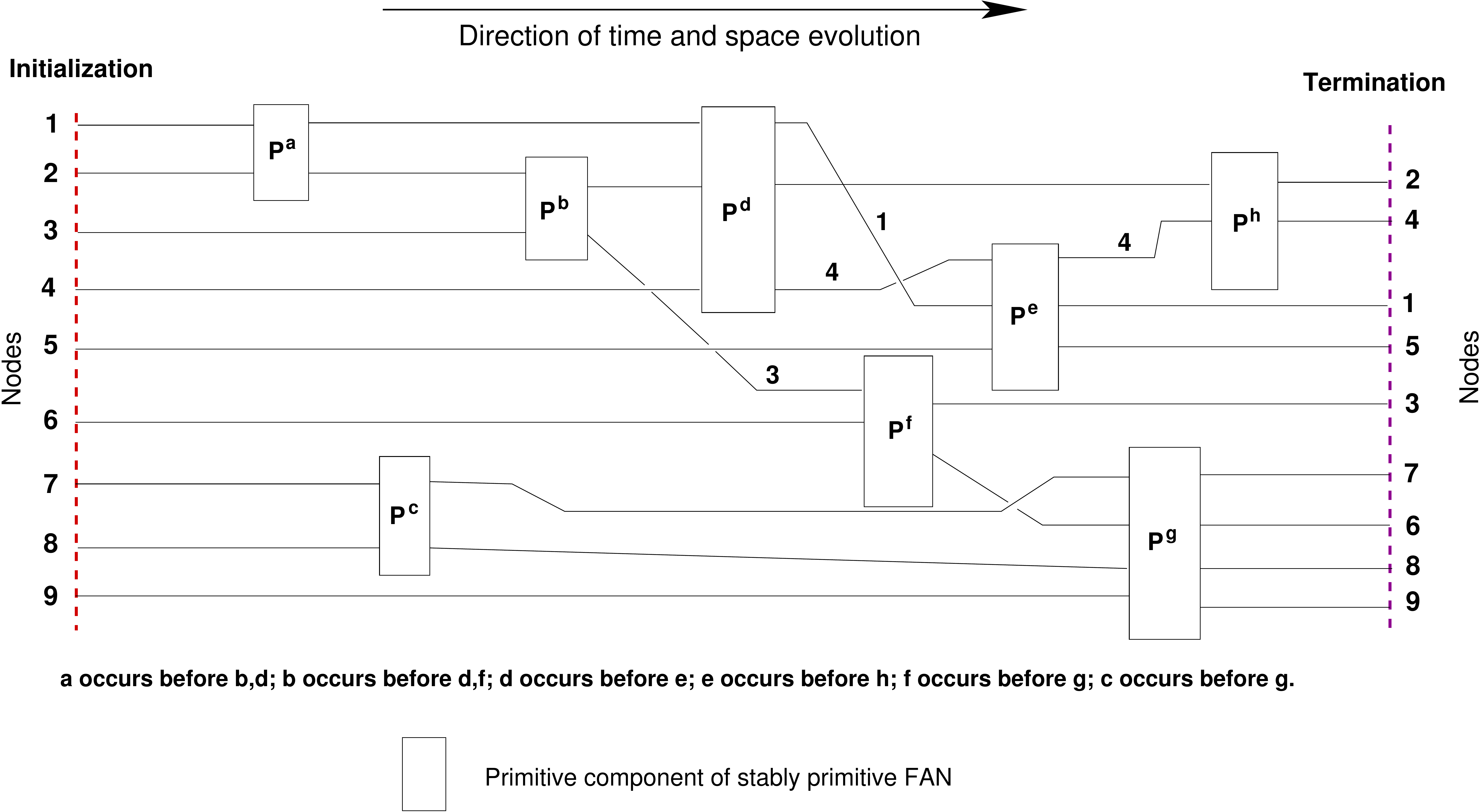}
\caption{Event sequence for a 9 node FAN.}
\label{event2x}
\end{figure}
\examend
\end{exam}
\subsection{Modularization of Dynamics Theorem}
\begin{thm}[Modularization of Dynamics]
\label{FAN}
Let $\FAN$ be a regular FAN of simple type and decomposition given by theorem~\ref{fact1} 
\[
\FAN = \FAN^q \conc \dotsc \conc \FAN^1,
\]
where $\FAN^j = \bigsqcup_{\ell \in \is{p(j)}} \is{P}^{j,\ell}$, $j \in \is{q}$,
and each $\is{P}^{j,\ell}$ is stably primitive, $j\in \is{q}$.
For simplicity, assume $\FAN$ has no trivial factors.

Let $G^j: \In^j \arr \Tr^j$ denote the generalized transition function for $\FAN^j$, $j \in \is{q}$ (we follow the convention of section~\ref{label}).
Then\[
G = G^q \circ \dotsc \circ G^1.
\]
Moreover, $G^j = \bigtimes_{\ell \in \is{p(j)}} G^{j,\ell}$, where the $G^{j,\ell }$ are the transition functions for the
primitive factors $\is{P}^{j,\ell}$ of $N^j$.
\end{thm}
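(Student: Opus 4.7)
The plan is to view the theorem as a direct assembly of two already-established structural results: the iterated concatenation corollary that comes right after Corollary~\ref{cor1:conc}, and the amalgamation corollary that follows Proposition~\ref{prop:amal}. Both deal with generalized transition functions of regular FANs of simple type, which is exactly the setting here, so no extra hypotheses need to be introduced.

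First I would invoke the factorization theorem (Theorem~\ref{fact1}) to justify that the decomposition $\FAN=\FAN^q\conc\dotsc\conc\FAN^1$ exists and is unique up to the labelling convention of section~\ref{label}. By definition each $\FAN^j$ is basic, hence regular and of simple type, and the preceding relation $\FAN^{j}\prec\FAN^{j+1}$ holds by construction. The iterated concatenation corollary then applies, yielding $G=G^q\circ\dotsc\circ G^1$, with the composition interpreted in the generalized sense
\[
G^{j+1}\circ G^{j}(\XX,\TT)=G^{j+1}\bigl(G^{j}(\XX,\TT),\Sw^{j}(\XX,\TT)\bigr),
\]
so that initialization in space-time is carried along consistently. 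The labelling convention guarantees that $\Tr^{j}_i=\In^{j+1}_i$ for all $i\in\is{k}$, so the compositions are domain-compatible without any adjustment.

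Next, for each fixed $j$, I would use the basic decomposition $\FAN^j=\bigsqcup_{\ell\in\is{p(j)}}\is{P}^{j,\ell}$ together with the pairwise independence of the $\is{P}^{j,\ell}$ (disjointness of the $A(\is{P}^{j,\ell})$ and the absence of trivial factors assumed in the theorem statement). The amalgamation corollary then gives $G^j=\bigtimes_{\ell\in\is{p(j)}}G^{j,\ell}$, where each $G^{j,\ell}$ is the generalized transition function of the stably primitive factor $\is{P}^{j,\ell}$. Since $\FAN$ has no trivial factors, the index set $\overline{A}$ from that corollary is empty and the product reduces cleanly to the stated form. Combining the two displays yields the theorem.

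The only real point requiring care — and the step I would expect to be the main obstacle — is verifying that the space-time initialization data threaded through the concatenation is compatible with the pure spatial product structure provided by amalgamation at each stage. Concretely, one must check that $\Sw^{j}(\XX,\TT)$ evaluated on $\bigsqcup_{\ell}\is{P}^{j,\ell}$ factors as the componentwise maximum of the timing data of the individual primitive factors on the relevant node blocks, so that feeding $(G^j(\XX,\TT),\Sw^j(\XX,\TT))$ into $G^{j+1}$ agrees, coordinate by coordinate, with what the amalgamation corollary produces. This follows from weak input consistency and from the fact that nodes outside $A(\is{P}^{j,\ell})$ evolve under $\f^\ECS$ during the $j$-th stage, but it is the one bookkeeping verification that is not already contained verbatim in the earlier corollaries; everything else is then an immediate induction on $q$.
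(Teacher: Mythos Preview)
Your proposal is correct and follows essentially the same route as the paper, which dispatches the theorem in one line as ``Immediate from our constructions and theorem~\ref{fact1}''; you have simply unpacked those constructions by explicitly naming the iterated-concatenation corollary and the amalgamation corollary as the two ingredients. The compatibility concern you flag in your final paragraph is indeed glossed over in the paper, but it is handled by the product structure of amalgamation (each node lies in at most one $A(\is{P}^{j,\ell})$, so no maximum is needed---the timing vector already splits as a product over the blocks).
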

\proof Immediate from our constructions and theorem~\ref{fact1}. \qed
\begin{rems}
(1) If we take a different decomposition of $\FAN$ -- as in example~\ref{order} -- we get a
different factorization of $G$. Basically, the order of composition is determined by 
the induced orders on $\is{q}(j)$, $j \in \is{q}$. \\
(2) If we allow $\FAN$ to have a trivial factor $\is{S}$ then the transition function for $\is{S}$ can be inserted anywhere in the 
decomposition $G = G^q \circ \dotsc \circ G^1$.
\end{rems}

We conclude with an example illustrating
theorem~\ref{FAN}.

\begin{exam}
We consider a FAN $\FAN= (\Net, \In, \Tr)$ that models three trains
$\mathfrak{T}_1, \mathfrak{T}_2, \mathfrak{T}_3$ passing through two
passing loops, see figure~\ref{fig:TwoPassingLoops}. The state
$x_i$ of $\mathfrak{T}_i$ is
given by its position on the real line $M_i=\real$, $i \in \is{3}$.  The passing
loops are located at $0, L$ and stations $A,B,C$ are at $p, -q, r$ respectively, where
$L, p, q, r>0$. The trains~$\mathfrak{T}_1, \mathfrak{T}_3$ start at stations $A$ and $B$
and travel with velocities
$v_1< v_3 < 0$ respectively. The train~$\mathfrak{T}_2$ travels
with velocity $v_2>0$ and starts at station $C$. It is required that
$\mathfrak{T}_1$ has to go through the passing loop at $0$ to pass train~$\mathfrak{T}_2$, and
that train~$\mathfrak{T}_2$ then has to traverse the passing loop at $L$ to pass
train~$\mathfrak{T}_3$.  The trains $\mathfrak{T}_1, \mathfrak{T}_3$ terminate at
$C$, and $\mathfrak{T}_2$ terminates at $B$.

\begin{figure}
\centering
\includegraphics[width=0.95\textwidth]{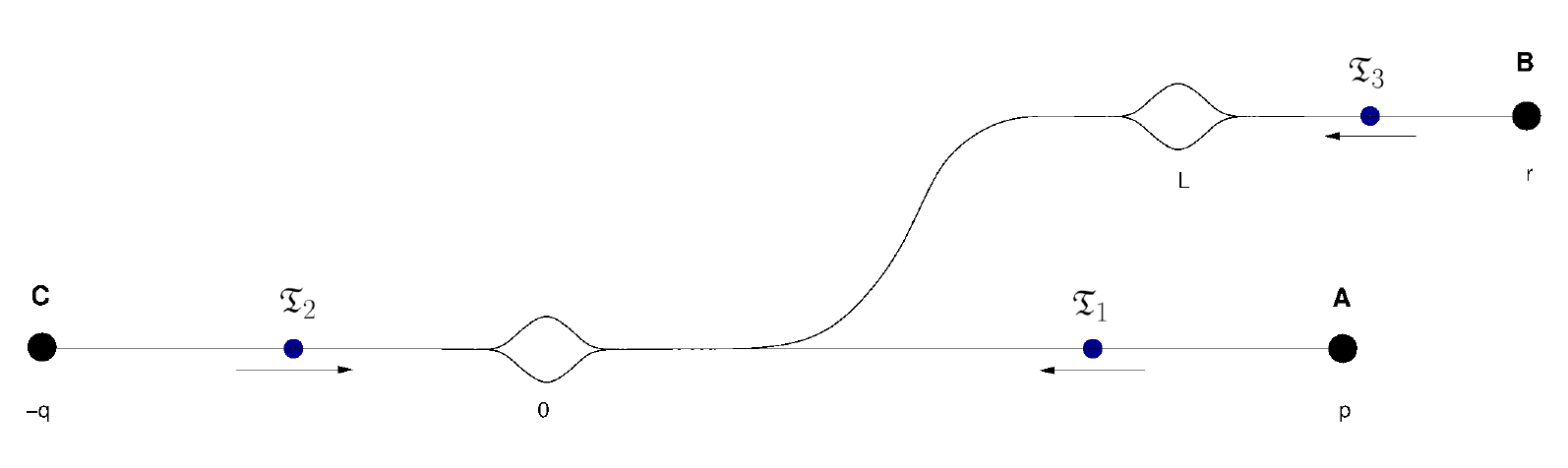}
\caption{Three trains going through two passing loops. We require
that once train~$\mathfrak{T}_2$ has traversed the first passing loop
it will continue on the branch line towards the second passing
loop.}
\label{fig:TwoPassingLoops}
\end{figure}

We take node set $\Nn=\sset{\NS, N_1, N_2, N_3}$
and network phase space $\Mb=\R^3$. Let $\alpha_i=N_0\to N_i$, $i \in \is{3}$.
We define the generalized connection structure
$\A = \sset{\ECS, \alpha_1, \alpha_2,\alpha_3, \alpha_1\vee\alpha_3, \alpha_2\vee\alpha_3}$.
We define the $\A$-structure $\F$ by
\begin{align*}
\f^\ECS&=(v_1, v_2, v_3), & \f^{\alpha_1}&=(0, v_2, v_3),\\
\f^{\alpha_2}&=(v_1, 0, v_3),& \f^{\alpha_3}&=(v_1, v_2, 0),\\
\f^{\alpha_1\vee\alpha_3}&=(0, v_2, 0),& 
\f^{\alpha_2\vee\alpha_3}&=(v_1, 0, 0).
\end{align*}
We define the event map $\Ee:\Mb\arr \A$ by
\[
\Ee(x_1, x_2, x_3) = \begin{cases}
\alpha_1 & \text{if } x_1=0, x_2<0,\\
\alpha_2 & \text{if } x_1>0, x_2=0 \text{ or } x_2=L, x_3>L,\\
\alpha_3 & \text{if } x_2<L, x_3=L,\\
\alpha_1\vee\alpha_3 & \text{if } x_1=0, x_2<0, x_3=L,\\
\alpha_2\vee\alpha_3 & \text{if } x_1<0, x_2=0, x_3=L,\\
\ECS & \text{otherwise}.
\end{cases}
\]
These definitions define the asynchronous network $\Net = (\Nn, \A, \F, \Ee)$.
We obtain the FAN $\FAN= (\Net, \In, \Tr)$ modelling the train network by taking
initialization and termination sets $\In, \Tr$ defined by
$\In_1 = \sset{p}$, $\Tr_1 = \sset{-q}$,
$\In_2 = \sset{-q}$, $\Tr_2 = \sset{r}$,
$\In_3 = \sset{r}$, $\Tr_3 = \sset{-q}$.

We identify two
stably primitive components, $\FAN^a$ (describing dynamics in
the first passing loop), and~$\FAN^b$ (describing dynamics in the second passing loop).
We define~$\Net^a$ by
$\A^a = \sset{\ECS, \alpha_1, \alpha_2}$, $\F^a = \tsset{\f_a^{\alpha_1} = \f^{\alpha_1}, \f_a^{\alpha_2} = \f^{\alpha_2},\f_a^\ECS = \f^\ECS}$, and take
\[
\Ee^a(x_1, x_2, x_3) = \begin{cases}
\alpha_1 & \text{if } x_1=0, x_2<0,\\
\alpha_2 & \text{if } x_1>0, x_2=0\\
\ECS & \text{otherwise}.
\end{cases}
\]
For $\Net^b$ we take
$\A^b = \sset{\ECS, \alpha_2, \alpha_3}$, $\F^b = \tsset{\f_b^{\alpha_2} = \f^{\alpha_2}, \f_b^{\alpha_3} = \f^{\alpha_3},\f_b^\ECS = \f^\ECS}$,
\[
\Ee^b(x_1, x_2, x_3) = \begin{cases}
\alpha_2 & \text{if } x_2=L, x_3>L,\\
\alpha_3 & \text{if } x_2<L, x_3=L,\\
\ECS & \text{otherwise}.
\end{cases}
\]
We have $\Net = \Net^a \conc \Net^b$. 
Next we define the
initialization and termination sets for $\Net^a,\Net^b$.
We take $\In^c_i = \In_i$, $c \in \{a,b\}$, $i \in \is{3}$, except that
$\In_2^b = \{L/2\}$ (any point in $(0,L)$ would do. Similarly,
we take $\Tr^c_i = \Tr_i$, $c \in \{a,b\}$, $i \in \is{3}$, except that
$\Tr_2^a = \{L/2\}$. With these definitions, $\FAN^a = (\Net^a,\In^a,\Tr^a)$ and
$\FAN^b = (\Net^b,\In^b,\Tr^b)$ are stably primitive FANs and $\Net = \Net^a \conc \Net^b$,
If we denote the primitive factors of $\FAN^a $, $\FAN^b$ by $\is{P}^a$, $\is{P}^b$
respectively, then $A(\is{P}^a) = \sset{1,2}$, and $A(\is{P}^b) = \sset{2,3}$.

Applying theorem~\ref{FAN}, the generalized transition function
for~$\FAN$ can be written as a composition of the generalized transition
functions $G^a$ for $\FAN^a$ and $G^b$ for $\FAN^b$. Note that~$G^a$ is the identity
in the third component, $G^b$ is the identity in the first component.
Different initializations
yield different trajectories as depicted in
figure~\ref{fig:TwoPassingLoopsSchem}(b).
\begin{figure}
\centering
\includegraphics[width=\textwidth]{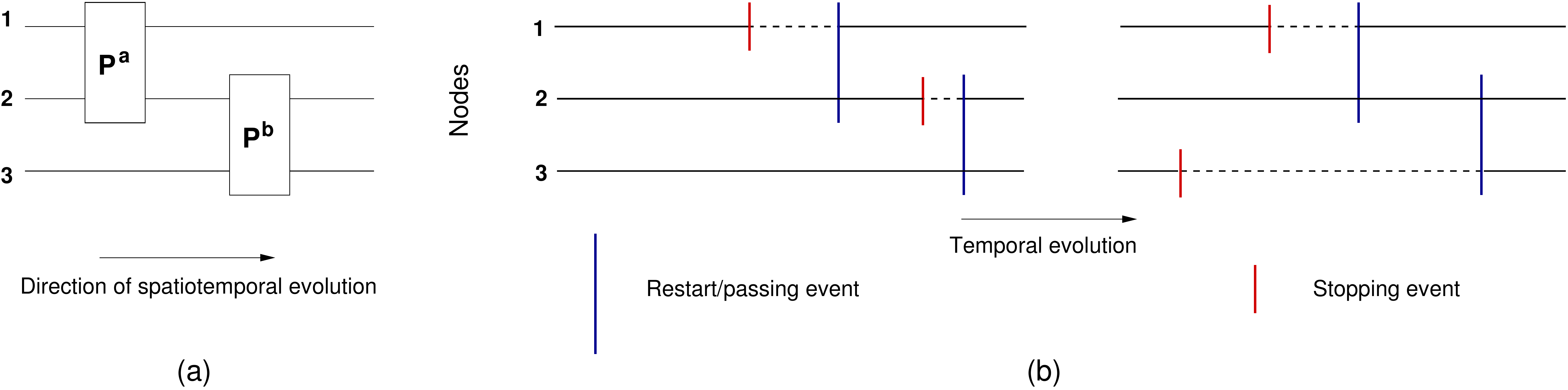}
\caption{(a) Schematic indicating the primitive components of $\FAN$.
(b) Temporal evolution for individual nodes based on
different starting times. Dotted lines indicate stopped nodes, vertical
bars either stopping or restarting/passing events. The stopping
events may occur
in any order in contrast to the restarting events.}
\label{fig:TwoPassingLoopsSchem}
\end{figure}
\examend
\end{exam}

\section{Concluding comments \& outstanding problems}
\label{conclu}
Our overall aim has been to outline a mathematical framework for 
asynchronous networks and event driven dynamics that enables the 
analysis of network dynamics that cannot be satisfactorily 
modelled by classical systems of analytic differential equations. 
Examples and applications are motivated by problems in 
computer science, biology and engineering. In particular, there 
are connections with Filippov systems (mostly considered in the
engineering literature).
As we indicated in the introduction, functional networks,
have previously been 
considered in neuroscience. 

The modularization of dynamics theorem shows that it is possible to utilize a
reductionist approach in nonlinear systems -- and so give an answer to Alon's question, posed in the introduction of~\cite{BF0} -- provided that
one emphasizes \emph{function} rather than \emph{dynamics}. 
Our approach does not work if we attempt to approximate an asynchronous network by a synchronous 
network using, for example, averaging or thermodynamic formalism. In other words, in order to 
understand the global dynamics of asynchronous networks, one  needs to work with, rather than against, the
inherent nonsmoothness and take account of network function.

Many questions and open problems remain. We sketch a few 
representative mathematical questions mainly about functional 
asynchronous networks.
 
Much of the theory developed in sections~\ref{join:sec}, \ref{factorsec} continues to apply if we work with weakly regular 
networks of simple type. However, the modularization of dynamics theorem no longer applies. Can the definition
of weak regularity be modified so as to get a useful version of modularization of dynamics in the weakly regular case? In this regard,
it would be helpful to get a better description and classification 
of hidden deadlocks, especially those arising through bifurcation of a regular network. 

There are several natural questions concerning bifurcation. First, if we assume weak regularity, what types of bifurcation can occur as
we start to vary starting times? This is already interesting in the simplest case of a 
primitive FAN which is only weakly regular. Secondly, from an evolutionary perspective, 
one might expect that regular FANs of simple type occurred early in evolutionary development. 
What types of bifurcation can occur in the process of optimization of function -- for example by adding
feedback loops between events in the feedforward structure of a FAN given by theorem~\ref{fact1}?
It would also be useful to 
have  a structural classification of primitive FANs with a small 
number of nodes and an identification of the primitive FANs 
which appear most frequently in applications.

For some applications, it may be appropriate to replace the termination hypersurface used in the definition of a FAN by a proper closed subset 
that is reached for a particular time initialization, say 
$\is{0}=(0,\dotsc,0) \in \real^k_+$. There is the question of how the target 
sets may vary and bifurcate as we increase the range of possible 
initialization times. 
This question is of direct relevance to applications: 
initialization at $\is{0}$ can be seen as `synchronized 
initialization' and network function may break down if the 
initialization times are too spread out. If we have a FAN with a
generalized transition function that spreads the termination times 
out on average (and termination times yield initialization times 
for another FAN) then after a certain amount of repetition network 
function may break down. In terms of a transportation network this 
could be seen as propagation of delays. Real-world 
transportation networks networks are typically approximately 
synchronized on a daily basis through a nightly `reset'.
More generally, for realistic applications it is usually natural 
to assume the initialization times, and other starting time events, 
follow a statistical law and obtain the corresponding statistical 
law of the termination times. 

Finally, using modularization, we anticipate that further insights into the dynamics of real world networks can be made. 
A crucial point is to understand the primitive factors and feedforward structure of the underlying FAN based on real-world time series data. 
One possible approach to determine individual modules could be to use dynamic Bayesian inference to infer how 
connections in the network change over time; see for example~\cite{STM}. 
Of particular interest would be to find the original structures in an ``evolved'' 
functional network that can no longer be decomposed into simple primitive factors on account of feedback loops evolving between the 
original primitive events. 
Moreover, modularization relates to network design and evolution and it is natural to attempt to
find or design the optimal asynchronous network to perform a desired network function. 
While such questions have been discussed within the context of control, for example~\cite{Borgers2014, Adaldo2015}, network 
analysis based on modularization of dynamics allows us to tackle these questions in a much wider context.


\begin{thebibliography}{99}
\bibitem{Adaldo2015} A Adaldo, F Alderisio, D Liuzza, G Shi, D V Dimarogonas, M di Bernardo \& K H Johansson. 
`Event-Triggered Pinning Control of Switching Networks', 
\emph{IEEE Transactions on Control of Network Systems} {\bf 2} (2015), 204--213.
\bibitem{BF0} C Bick \& M J Field. `Asynchronous networks and event driven dynamics', \emph{Nonlinearity}, {\bf 30}(2) (2017), 558--594.
\bibitem{Bick2014a} C Bick \& E A Martens, `Controlling Chimeras', \emph{New Journal of Physics}, {\bf 17}(3) (2015), 033030.
\bibitem{Bick2009a} C Bick \& M Rabinovich. `Dynamical Origin of the Effective Storage
Capacity in the Brain's Working Memory', \emph{Phys. Rev. Lett.} {\bf 103}(21) (2009), 
\bibitem{Borgers2014} D P Borgers \& W P M H Heemels. `Event-Separation Properties of Event-Triggered Control Systems', 
\emph{IEEE Transactions on Automatic Control} {\bf 59} (2014), 2644--2656.
\bibitem{Hir} M W Hirsch. \emph{Differential Topology} (Springer Graduate Texts in Math., {\bf 33}, Springer-Verlag, New York, 1976).
\bibitem{PETCH} G Petri, P Expert, F Turkheimer, R Carhart-Harris, D Nutt, P J Hellyer, and F Vaccarino.
`Homological scaffolds of brain functional networks,' \emph{J. R. Soc.  Interface} {\bf 11}: 20140873 (2014).
\bibitem{RabReview} M I Rabinovich, V S Afraimovich, C Bick \& P Varona. `Information flow dynamics in the brain', \emph{Physics of Life Reviews}, {\bf 9}(1) (2012), 51--73.
\bibitem{RabVar2011} M I Rabinovich \& P Varona. `Robust transient dynamics and brain functions', \emph{Frontiers in computational neuroscience}, {\bf 5}(4) (2011).
\bibitem{STM} T Stankovski, V Ticcinelli, P V E McClintock, and A Stefanovska. `Coupling functions in networks of oscillators',
\emph{New J. of Phy.} {\bf 17} (2015) doi:10.1088/1367-2630/17/3/035002.
\bibitem{SHP} B J Stolz, H A Harrington, and M A Porter. `Persistent homology of time-dependent functional networks constructed from
coupled time series,' preprint 2016.
\bibitem{Whitney34} H Whitney. `Analytic extensions of differentiable functions defined on closed subsets', \emph{Trans AMS} {\bf 36} (1934), 63--89.
\end{thebibliography}
\end{document}